\newcommand{\QQ}{\mathbb{Q}}
\newcommand{\Qp}{\mathbb{Q}_p}
\newcommand{\Zp}{\mathbb{Z}_p}
\newcommand{\Cp}{\mathbb{C}_p}
\newcommand{\RR}{\mathbb{R}}
\newcommand{\ZZ}{\mathbb{Z}}
\newcommand{\NN}{\mathbb{N}}
\newcommand{\db}[1]{\llbracket #1 \rrbracket}
\renewcommand{\AA}{\mathcal{A}}
\newcommand{\Af}{\mathbb{A}_f}
\newcommand{\Afp}{\mathbb{A}_f^p}
\newcommand{\DD}{\mathcal{D}}
\newcommand{\OO}{\mathcal{O}}
\newcommand{\OCp}{\OO_{\Cp}}
\newcommand{\WW}{\mathcal{W}}
\newcommand{\CC}{\mathcal{C}}
\newcommand{\LL}{\mathcal{L}}
\newcommand{\Fp}{\mathbb{F}_p}
\newcommand{\ang}[1]{\left< #1 \right>}
\newcommand{\fs}[1]{\left\{ \left\{ #1 \right\} \right\}}
\newcommand{\Aar}[1][r]{\mathcal{A}^{(\alpha,#1)}}
\newcommand{\Dar}[1][r]{\mathcal{D}^{(\alpha,#1)}}
\newcommand{\AD}{\mathbb{A}}
\newcommand{\isom}{\xrightarrow{\sim}}
\newcommand{\HH}{\mathcal{H}}
\newcommand{\ur}{\underline{r}}
\newcommand{\EE}{\mathcal{E}}
\newcommand{\UU}{\mathcal{U}}
\newcommand{\VV}{\mathcal{V}}
\newcommand{\ZC}{\mathcal{Z}}
\newcommand{\MM}{\mathcal{M}}
\newcommand{\PP}{\mathcal{P}}
\newcommand{\KK}{\mathcal{K}}
\newcommand{\XX}{\mathcal{X}}
\newcommand{\cusp}{\mathrm{cusp}}
\newcommand{\fl}[1]{\left\lfloor #1 \right\rfloor}
\newcommand{\cl}[1]{\left\lceil #1 \right\rceil}
\newcommand{\an}{\mathrm{an}}
\newcommand{\bn}{\mathbf{n}}
\newcommand{\bt}{\mathbf{t}}
\newcommand{\hu}{h_{\UU,Q_+}}
\newcommand{\huo}{h_{\UU,Q_+,0}}
\DeclareMathOperator{\der}{der}
\DeclareMathOperator{\Spa}{Spa}
\DeclareMathOperator{\Hom}{Hom}
\DeclareMathOperator{\Ind}{Ind}
\DeclareMathOperator{\tr}{tr}
\DeclareMathOperator{\End}{End}
\DeclareMathOperator{\id}{id}
\DeclareMathOperator{\Coeff}{Coeff}
\DeclareMathOperator{\Ser}{Ser}
\DeclareMathOperator{\Ev}{Ev}
\DeclareMathOperator{\Gal}{Gal}
\DeclareMathOperator{\SSym}{SSym}
\DeclareMathOperator{\Sym}{Sym}
\newcommand{\Gadq}{G(\QQ)/Z_G(\QQ)}
\newcommand{\wtoff}{\rho+w(P_{\mu})^{-1}(\rho-2\rho_M)}
\theoremstyle{plain}
\numberwithin{equation}{subsection}
\newtheorem{lem}[equation]{Lemma}
\newtheorem{prop}[equation]{Proposition}
\newtheorem{thm}[equation]{Theorem}
\newtheorem{cor}[equation]{Corollary}
\theoremstyle{definition}
\newtheorem{defn}[equation]{Definition}
\theoremstyle{remark}
\newtheorem{ex}[equation]{Example}
\newtheorem{rmk}[equation]{Remark}
\title{Equidimensional adic eigenvarieties for groups with discrete series}
\author{Daniel R. Gulotta}
\address{Department of Mathematics, Columbia University, New York, NY 10027, USA}
\curraddr{Max Plank Institute for Mathematics, Vivatsgasse 7, 53119 Bonn, Germany}
\email{gulotta@mpim-bonn.mpg.de}
\begin{document}
\begin{abstract}
We extend Urban's construction of eigenvarieties for reductive groups $G$ such that
$G(\RR)$ has discrete series to include characteristic $p$ points at the boundary of weight
space.  In order to perform this construction, we define a notion of
``locally analytic'' functions and distributions on a locally $\Qp$-analytic
manifold taking values in a complete Tate $\Zp$-algebra in which $p$ is not
necessarily invertible.  Our definition agrees with the definition of
locally analytic distributions on $p$-adic Lie groups given by Johansson
and Newton.
\end{abstract}
\maketitle
\tableofcontents
\section{Introduction}
\subsection{Statement of results}

The study of $p$-adic families of automorphic forms began with the work
of Hida \cite{Hid86,Hid88,Hid94}.  Coleman and Mazur \cite{CM96,Col96,Col97} introduced the
eigencurve, which parameterizes overconvergent $p$-adic modular forms of finite slope.
Coleman and Mazur used a geometric definition of $p$-adic modular forms, based on the
original definition of Katz \cite{Kat73}.  It is also possible to define $p$-adic automorphic
forms using a cohomological approach.
Several constructions of eigenvarieties are based on overconvergent cohomology, introduced by
Stevens \cite{Ste94} and later generalized by Ash-Stevens \cite{AS08}.
These include the constructions of Urban \cite{Urb11} and Hansen \cite{Han15}.
Emerton \cite{Eme06C} has also constructed eigenvarieties using a somewhat different
cohomological approach.

The eigenvarieties mentioned above are all rigid analytic spaces, so they parameterize
forms that have coefficients in $\Qp$-algebras.  Recently, there has been interest in
studying forms with coefficients in characteristic $p$.
Liu, Wan, and Xiao \cite{LWX17} constructed $\Zp \db{\Zp^{\times}}$-modules of
automorphic forms for definite quaternion algebras.
By taking quotients of this module, one can obtain both traditional $p$-adic automorphic forms
and forms with coefficients in $\mathbb{F}_p \db{\Zp^{\times}}$ whose
existence had been conjectured by Coleman.  Using these modules,
Liu, Wan, and Xiao proved certain cases of a conjecture of Coleman-Mazur and
Buzzard-Kilford \cite{BK05}
concerning the eigenvalues of the $U_p$ operator near the boundary of the
weight space.  Andreatta, Iovita, and Pilloni \cite{AIP15} constructed an
eigencurve that included characteristic $p$ points by extending Katz's
definition of $p$-adic modular forms.

In this paper, we will show how Urban's eigenvarieties can be extended to include the
characteristic $p$ points at the boundary of weight space.

In order to explain our results in more detail, we will first describe the
basic idea of overconvergent cohomology.
Let $G$ be a connected reductive algebraic
group over $\QQ$ such that $G_{\Qp}$ is quasisplit.
Let $\AD$ be the adeles over $\QQ$,
let $\Afp$ be the finite adeles away from $p$,
let $G_{\infty}^+$ be the identity component of $G(\RR)$,
and let $Z_G$ be the center of $G$.
Let $T_0$ be a maximal compact torus of $G(\Qp)$, and let $N_0^-$ be
an open compact subgroup of a maximal unipotent subgroup of $G(\Qp)$.
We may consider the space
\[ \mathcal{X} \colonequals G(\AD)/K^p G_{\infty}^+ \]
as a locally $\Qp$-analytic manifold.
Let $F$ be a finite extension of $\Qp$, and let $\lambda \colon T_0 \to F^{\times}$
be a continuous homomorphism.
Let $\DD_{c,\lambda}$ be the space of compactly supported
$F$-valued locally analytic distributions
on $\mathcal{X}$, modulo the relations that right translation by $N_0^-$ acts
as the identity, right translation by $T_0$ acts by $\lambda$, and
translation by $Z_G(\QQ)$ acts by the identity.
One may think of the cohomology groups $H^i(\Gadq,\DD_{c,\lambda})$
as spaces of $p$-adic automorphic forms.
One can also study families of $p$-adic automorphic forms by
replacing $F$ with an affinoid $\Qp$-algebra $A$.

We are interested in extending overconvergent cohomology to the case
where $A$ is a $\Zp$-algebra.  The main challenge is to show that
there is a suitable notion of locally analytic $A$-valued functions
and distributions on $\mathcal{X}$.
We will define these notions when $A$ is a complete
Tate $\Zp$-algebra.

To see what the definition should be, we recall a fact from $p$-adic functional analysis: a
function $f \colon \Zp \to \Qp$ is locally analytic if and only if it is of the form
$f(z) = \sum_{n=0}^{\infty} a_n \binom{z}{n}$,
where $a_n \in A$ and $|a_n|_p$ go to zero exponentially as $n \to \infty$.
We will therefore define the space $\AA(\Zp,A)$ of ``locally analytic'' functions $\Zp \to A$
to be the set of functions of the form $\sum_{n=0}^{\infty} a_n \binom{z}{n}$, where $a_n \in A$
and $a_n$ to to zero exponentially (i.~e.\ $\alpha^{-n} a_n$ goes to zero for
some topologically
nilpotent unit $\alpha$) as $n \to \infty$.
If $p$ is invertible in $A$, then this definition is known to coincide with the usual one.
We will make a similar definition for locally analytic functions $\Zp^k \to A$, and then
extend the definition to locally $\Qp$-analytic manifolds by gluing.

If $X$ is a locally $\Qp$-analytic manifold, then we will define modules
$\AA(X,A)$, $\DD(X,A)$, $\AA_c(X,A)$, $\DD_c(X,A)$ of locally analytic functions,
distributions, compactly supported functions, and compactly supported distributions, respectively.
\begin{thm}[Theorem \ref{thm:la}] \label{thm:introla}
The modules $\AA(X,A)$, $\DD(X,A)$, $\AA_c(X,A)$, and $\DD_c(X,A)$ satisfy the following
properties:
\begin{enumerate}
\item $\AA(X,A)$ is ring.
\item If $g \colon X \to Y$ is a locally analytic map, then composition with
$g$ induces homomorphisms $\AA(Y,A) \to \AA(X,A)$ and
$\DD(X,A) \to \DD(Y,A)$.
\item The functors $U \mapsto \AA(U,A)$ and $U \mapsto \DD_c(U,A)$ are sheaves on $X$.
\item If $X$ has the structure of a finitely generated $\Zp$-module, then any continuous
group homomorphism $X \to A^{\times}$ is in $\AA(X,A)$.
\end{enumerate}
\end{thm}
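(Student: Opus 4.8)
The plan is to reduce all four assertions to the local model $X=\Zp^k$, where $\AA(\Zp^k,A)$ is the space of Mahler series $\sum_{\bn}a_{\bn}\binom{\bz}{\bn}$ with exponentially decaying coefficients, to prove them there, and to transport them to a general locally $\Qp$-analytic manifold by gluing; I will do (1) and (2) for charts first, then (3), and finally (4), the general cases of (1) and (2) being a consequence of (3). For (1) it suffices to check that $\AA(\Zp^k,A)$ is closed under multiplication, the other ring axioms being inherited from the ring of all maps $\Zp^k\to A$ and the general case holding because multiplication on $\AA(X,A)$ is computed chartwise. I use the integral identity $\binom{z}{m}\binom{z}{n}=\sum_{\ell}c^{\ell}_{m,n}\binom{z}{\ell}$, with $c^{\ell}_{m,n}\in\ZZ_{\ge 0}$ vanishing unless $\max(m,n)\le\ell\le m+n$, and its coordinatewise version; the coefficient of $\binom{\bz}{\mathbf{k}}$ in a product is then a \emph{finite} sum of terms $c^{\mathbf{k}}_{\bn,\bn'}a_{\bn}b_{\bn'}$ with $|\bn|,|\bn'|\le|\mathbf{k}|\le|\bn|+|\bn'|$. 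Since the $c$'s are integers, hence of non-negative valuation for every topologically nilpotent unit, after choosing a radius large enough for the floor estimates to add up each term splits as a factor tending to $0$ (because $|\mathbf{k}|\to\infty$ forces $\max(|\bn|,|\bn'|)\to\infty$) times a bounded factor, so the product again has exponentially decaying coefficients.

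For (2), the claim about distributions is dual to the one about functions (pushforward of a distribution is precomposition of the functional with pullback of functions), so I focus on $f\mapsto f\circ g$ and, by the chartwise description, on a single locally analytic map of charts $g\colon\Zp^j\to\Zp^k$. After subdividing $\Zp^j$ into cosets of some $p^N\Zp^j$, rescaling, and refining once more if necessary, I may assume each component of $g$ is given on each coset by a power series with suitably bounded coefficients that converges on $\Zp^j$; substituting into $f=\sum a_{\bn}\binom{\bz}{\bn}$ and re-expanding in the Mahler basis reduces the claim to a quantitative estimate showing the new coefficients still decay exponentially. This is the transcription to a coefficient ring in which $p$ may fail to be invertible of the classical fact that composition multiplies radii of convergence in a controlled way; it works because every combinatorial constant that intervenes --- the products of binomial coefficients above, and the Mahler coefficients of the $\Zp$-valued functions $\binom{g_i(\bz)}{n}$ --- is an integer and therefore introduces no denominators. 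Feeding this into the transition maps of an atlas shows that the gluing defining $\AA(X,A)$ and $\DD(X,A)$ is legitimate. \textbf{I expect this composition estimate to be the main obstacle}: the bookkeeping that makes ``radius of convergence'' behave correctly over $\Qp$ must be carried out with $\alpha$-adic valuations throughout, and one has to verify that no step secretly divides by $p$.

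For (3), the sheaf property of $U\mapsto\AA(U,A)$ is the assertion that local analyticity is a local condition: restriction maps are clear, a section is determined by its restrictions to a cover, and functions agreeing on overlaps glue; on a compact chart one reduces an arbitrary open cover to the cover by the finitely many cosets of some $p^N\Zp^k$, where the only content is that a function whose restriction to each coset is locally analytic is locally analytic globally, the one quantitative point being that the rescaling $\bz\mapsto p^N\bz+c$ relates radii in the manner already recorded in (2). For $\DD_c$ one dualizes: a compactly supported distribution restricts to any open subset, and locally analytic partitions of unity --- which exist on a locally $\Qp$-analytic manifold --- let compatible compactly supported distributions be glued, with the non-compact case reduced to the compact one by writing everything as a colimit over compact subsets. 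Once (3) is available, the general forms of (1) and (2) follow from their chart versions.

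For (4), a locally $\Qp$-analytic manifold carrying the structure of a finitely generated $\Zp$-module is a product of finitely many copies of $\Zp$ and a finite discrete group; a continuous homomorphism out of such a product is the pointwise product of its restrictions to the factors, each composed with the ($\Zp$-linear, hence locally analytic) projection, and $\AA(X,A)$ is a ring, so by (1) and (2) it is enough to handle $X$ finite discrete --- where $\AA(X,A)$ is all $A$-valued functions --- and $X=\Zp$. Given continuous $\chi\colon\Zp\to A^{\times}$, set $u=\chi(1)$; continuity at $0$ furnishes an $N$ with $u^{p^{N}}\in 1+\alpha^{2}A_{0}$ for some ring of definition $A_{0}$ and topologically nilpotent unit $\alpha$, so $u^{p^{N}}=1+\alpha\delta$ with $\delta\in\alpha A_{0}$ topologically nilpotent. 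On each coset $c+p^{N}\Zp$, parametrized by $z\mapsto c+p^{N}z$, we have $\chi(c+p^{N}z)=\chi(c)\sum_{n\ge 0}\binom{z}{n}(\alpha\delta)^{n}$, and since $\delta$ is topologically nilpotent the coefficients $(\alpha\delta)^{n}$ decay exponentially, so this lies in $\AA(\Zp,A)$; as $c$ runs over coset representatives these are the restrictions of the single function $\chi$ to the cover by cosets of $p^{N}\Zp$, whence $\chi\in\AA(\Zp,A)$ by (3).
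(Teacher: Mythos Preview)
Your proposal is correct and hits the same milestones as the paper: (1) via the integral Vandermonde identity for products of binomials, (4) by reducing to $\Zp$ and using the binomial expansion of $(1+\alpha\delta)^z$ after passing to a sublattice, and (3) essentially by construction once gluing is justified. The one place where your route genuinely diverges from the paper is (2). You propose to estimate the Mahler coefficients of $f\circ g$ directly, tracking that all combinatorial constants are integers so no $p$-denominators appear. The paper instead isolates this observation into a clean \emph{transfer principle} (its Lemma~\ref{lem:universal}): since the matrix coefficients $f_{nm}$ of any $\Zp$-linear map $\CC(\Zp^k,\Zp)\to\CC(\Zp^j,\Zp)$ lie in $\Zp$, and since there is an integer $\ell$ with $p^\ell/\alpha$ power-bounded in $A$, any bound of the form $|p^{\lceil r\sum n\rceil-\lceil s\sum m\rceil}f_{nm}|_p\le C$ (i.e.\ continuity in the classical $\Qp$-setting) automatically yields the corresponding $\alpha$-adic bound after rescaling the radius by $t_0=\ell^{-1}$. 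This lets the paper import Amice's theorem and the classical $LA_h$ theory as a black box rather than redo any estimates. Your direct approach would also work and is more self-contained, but the bookkeeping you flag as ``the main obstacle'' is real: note that the paper only obtains $\Aar(\Zp^k,A)\to\Aar[s](\Zp^j,A)$ for $s<r$ strictly when $j>1$, and explicitly does not know whether $s=r$ is possible --- a subtlety your sketch does not address, though it is harmless after passing to the colimit defining $\AA(X,A)$.
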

\begin{rmk}
Of course, modules of continuous functions and distributions also
satisfy the above properties.  What makes $\AA(X,A)$ and $\DD(X,A)$
more like modules of locally analytic functions and distributions is that a map
that multiplies all coordinates by $p$ is ``completely continuous'';
see Proposition \ref{prop:local} and Lemma \ref{lem:inccc} for the precise statement.
\end{rmk}

In \cite{Urb11}, Urban constructed eigenvarieties for reductive groups $G$ such that
$G(\RR)$ has discrete series.
We will show how to use the locally analytic distribution modules mentioned above
to extend Urban's construction to include characteristic $p$ points.
\begin{thm}[Theorem \ref{thm:eigenvariety}] \label{thm:introeig}
The reduced eigenvariety constructed in \cite{Urb11} extends to an adic space
$\EE$ over the weight space $\WW=\Spa(\Zp \db{T'},\Zp \db{T'})^{\an}$, where
$T'$ is a quotient of a compact subgroup of a maximal torus in $G(\Qp)$.
Furthermore, $\EE$ is equidimensional and the projection from $\EE$ to the spectral variety
$\ZC$ is finite and surjective.
\end{thm}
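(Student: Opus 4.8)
The plan is to follow Urban's original construction in \cite{Urb11} step by step, replacing each module of locally analytic distributions over an affinoid $\Qp$-algebra with the corresponding module $\DD_c(X,A)$ over a complete Tate $\Zp$-algebra provided by Theorem \ref{thm:introla}, and then to run Buzzard's eigenvariety machine in the adic-space setting over $\WW$. First I would set up the weight space $\WW = \Spa(\Zp\db{T'},\Zp\db{T'})^{\an}$ and observe that it is covered by affinoids $\Spa(A,A^+)$ with $A$ a complete Tate $\Zp$-algebra; over each such affinoid the universal character $\lambda_A \colon T_0 \to A^\times$ is locally analytic by part (4) of Theorem \ref{thm:introla}, so the distribution module $\DD_{c,\lambda_A}$ is defined. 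The key point inherited from the remark after Theorem \ref{thm:introla} is that the Hecke operator $U_p$ (the double coset operator contracting toward $N_0^-$, which on coordinates multiplies by $p$) acts completely continuously on $\DD_{c,\lambda_A}$; this is exactly what is needed to form Fredholm determinants and apply the Riesz theory of \cite{Buz07} in the form adapted to Tate algebras where $p$ need not be invertible (as in \cite{JN19} or \cite{AIP15}).

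Next I would assemble the overconvergent cohomology complex: take a finite free resolution computing $H^\bullet(\Gadq, \DD_{c,\lambda_A})$, use the fact that $G(\RR)$ has discrete series to pin down (after passing to the relevant cuspidal/interior part) a single degree $q_0$ in which the cohomology is ``essentially'' supported — more precisely, use Urban's construction of a complex whose cohomology in the middle degree carries the Euler characteristic, so that the alternating sum of characteristic power series of $U_p$ on the cohomology groups equals, up to sign, a single Fredholm determinant $P(X) \in A\langle X\rangle$ (or rather its $\WW$-family $P \in \OO_\WW\{X\}$). This produces the spectral variety $\ZC \subset \WW \times \mathbb{A}^{1,\an}$ as the adic zero locus of $P$, with the projection $\ZC \to \WW$ locally finite flat; the slope decompositions over the standard open affinoids of $\ZC$ then glue (by part (3) of Theorem \ref{thm:introla}, the sheaf property of $\DD_c$) to a coherent sheaf $\MM$ on $\ZC$. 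Feeding $\MM$ together with the Hecke algebra action into Buzzard's eigenvariety datum yields the adic space $\EE \to \ZC \to \WW$, finite over $\ZC$.

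It remains to prove equidimensionality of $\EE$ and surjectivity of $\EE \to \ZC$. For surjectivity: $\EE$ is cut out in the relative spectrum of the Hecke algebra acting on $\MM$, and the support of $\MM$ is all of $\ZC$ by construction (the Fredholm determinant $P$ is, by definition, the characteristic series of $U_p$ on the cohomology, so every point of $\ZC$ is a $U_p$-eigenvalue reciprocal occurring in some $H^i$), so $\EE \twoheadrightarrow \ZC$; finiteness gives that the image is also closed, hence all of $\ZC$. For equidimensionality: $\ZC$ is equidimensional of dimension $\dim\WW$ because it is a Fredholm hypersurface (each irreducible component surjects onto $\WW$ with finite fibers — this is the standard argument that a zero locus of an entire power series over $\WW$ has no components contained in a fiber, which works verbatim in the adic setting once one knows $\WW$ is, locally, the adic spectrum of a Noetherian ring and $P$ has unit leading-behavior coefficients in the relevant annuli). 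Then $\EE \to \ZC$ finite and surjective transports equidimensionality from $\ZC$ to $\EE$, provided $\EE$ has no embedded or lower-dimensional components — here I would invoke Urban's argument (as in \cite{Urb11} §5) that the cohomology module, being built from a complex concentrated near the middle degree, is Cohen--Macaulay over $\ZC$, or alternatively the Hansen-style ``no lower-dimensional components'' criterion via the density of classical points; this step is where the discrete-series hypothesis does real work.

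The main obstacle I expect is precisely this last equidimensionality claim at the boundary of weight space. Away from the boundary one can appeal to density of classical (non-critical, arithmetic) weights and the Ash--Stevens control theorem, but classical points are not dense near the characteristic $p$ locus, so the ``no lower-dimensional components'' argument must instead be extracted purely from the structure of the cohomology complex — i.e.\ from a genuine Cohen--Macaulayness statement for $H^{q_0}(\Gadq,\DD_{c,\lambda_A})$ as a module over the relevant Fredholm algebra, uniform over all of $\WW$ including where $p$ is not invertible. Verifying that Urban's middle-degree concentration and the resulting Euler-characteristic/multiplicity bookkeeping survive over a general complete Tate $\Zp$-algebra — in particular that the slope decompositions behave well and that the relevant $\Tor$'s vanish — is the technical heart of the proof.
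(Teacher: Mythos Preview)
Your outline conflates two different eigenvariety machines, and the resulting plan would not produce Urban's eigenvariety. You propose to glue slope decompositions into a coherent sheaf $\MM$ on $\ZC$ and feed it into Buzzard's machine; but the slope-$\le h$ piece of the cuspidal complex is a \emph{complex} $N^{\bullet}$, not a single module, and Urban's eigenvariety records systems of eigenvalues with \emph{net} positive multiplicity in the alternating sum $\sum_i (-1)^{i-q_0}[N^i]$. There is no $\MM$ to take the support of. The paper handles this by passing to Chenevier's theory of determinants: one forms determinants $D^{+}$, $D^{-}$ from the even and odd degree pieces of $N^{\bullet}$, proves (Proposition~\ref{prop:detexists}) that the ratio $D = D^{+}/D^{-}$ exists as a genuine determinant, and defines the local Hecke algebra as $(\HH_G \otimes A)/\ker D$. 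The existence of the ratio is itself nontrivial over a ring in which $p$ is not invertible; it is obtained by finding a rational subdomain on which $p$ is a unit and the restriction map is injective (Corollary~\ref{cor:non zero div}), applying Urban's characteristic-zero result pointwise there, and pulling back via Corollary~\ref{det injective}. Your sketch also omits the Eisenstein analysis (section~\ref{sec:eisenstein}) needed to know that the alternating characteristic power series is a Fredholm series at all; this too is proved by reduction to Gauss points in characteristic zero (Proposition~\ref{prop:utfredholm}).

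Your diagnosis of the equidimensionality step is where the proposal goes most seriously astray. You anticipate needing a Cohen--Macaulayness or $\mathrm{Tor}$-vanishing statement for the overconvergent cohomology uniformly over $\WW$, and you correctly note that density of classical points fails near the boundary. The paper avoids this entirely. The argument is: since $p$ is not a zero divisor on $\Zp\db{T'}$, Lemma~\ref{lem:non zero div} shows that every nonempty open in $\EE$ meets the characteristic-zero locus; on that locus Urban's original result \cite[Theorem~5.3.7(iii)]{Urb11} already gives equidimensionality of dimension $\dim \WW$; and finiteness of $\EE$ over $\WW$ gives the matching upper bound. So equidimensionality at the boundary is inherited from the interior by a pure commutative-algebra lemma about non-zero-divisors, not by any new control theorem or homological vanishing. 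This reduction-to-characteristic-zero-via-injectivity trick is the organizing idea of the whole section, and it is what your proposal is missing.
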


The spectral variety $\ZC$ is flat over $\WW$, so the existence of characteristic $p$ points of $\ZC$
implies the existence of nearby characteristic zero points.  It seems to be a difficult problem to prove the
existence of boundary points in general; however, in many cases, one can check explicitly that they exist
(see for example \cite{LWX17, birkbeck-slopes, jn-parallel, ye-slopes}).

As this work was being prepared, I became aware that Christian Johansson and James Newton
were independently pursuing similar work.  In \cite{JN16}, they adapt Hansen's construction of
eigenvarieties to include the boundary of weight space.  Their definition of locally analytic
distributions on $\Zp^k$ is essentially the same as ours.  To construct distributions on $p$-adic
Lie groups, they use a particular choice of coordinate charts previously studied by
Schneider and Teitelbaum.  Our definition of locally analytic distributions therefore generalizes theirs.

\subsection{Summary of Urban's construction and outline of the paper}
Urban's construction of eigenvarieties is based on the framework of
overconvergent cohomology developed by Stevens and Ash-Stevens.
In this framework, one first defines a weight space $\WW$, as mentioned above.
Over any affinoid subspace $\UU$ of the weight space $\WW$,
one defines a complex $C^{\bullet}$ of projective $\OO_{\WW}(\UU)$-modules.
The cohomology groups of this complex are the groups $H^i(\Gadq,\DD_{c,\lambda})$
mentioned above, where $\lambda$ is the composition of the quotient
$T_0 \to T'$ with the tautological character
$T' \to \OO_{\WW}(\UU)^{\times}$.

We consider a weight space that is larger than the one considered by
Ash-Stevens and Urban.  In particular, our weight space contains opens
$\UU$ such that the prime $p$ is not invertible on $\OO_{\WW}(\UU)$.
The main challenge in defining overconvergent cohomology over the
larger weight space is to find a suitable notion of ``locally analytic''
$\OO_{\WW}(\UU)$-valued distributions.
After recalling the necessary background in section \ref{sec:mod},
we define modules of locally analytic functions and distributions
and prove some properties of these modules in section \ref{sec:la}.
We use these modules to define overconvergent cohomology in section \ref{sec:automorphic}.

Ash-Stevens proposed constructing an eigenvariety whose points correspond to
systems of Hecke eigenvalues appearing in the cohomology of the complexes
$C^{\bullet}$;
Hansen's construction uses this approach.
Urban took a more $K$-theoretic approach.
Assume that $G(\RR)$ has discrete series; then
cuspidal automorphic forms of regular weight contribute to a single degree $q_0$ of the cohomology
of $C^{\bullet}$.
So the associated systems of Hecke eigenvalues appear a net positive number of times in the
formal sum $\sum_i (-1)^{i-q_0} C^i$.
Urban showed that after removing the contributions from Eisenstein series, each
system of Hecke eigenvalues appears a net nonnegative number of times in the
formal sum.
The points in Urban's eigenvariety correspond to those systems of
eigenvalues appearing a net positive number of times in the formal sum.

Unfortunately, Urban's analysis of Eisenstein series contained an error.
In order to argue that certain character distributions are uniquely defined,
Urban assumed that the region of convergence of an Eisenstein series is (up
to translation) a union of Weyl chambers.  However, this assumption is not true.
In section \ref{sec:eisenstein}, we correct this error by giving a new argument for uniqueness.

Urban's construction of eigenvarieties makes use of the theory of pseudocharacters.
We will instead use Chenevier's theory of determinants \cite{Che14},
which is equivalent to the theory of pseudocharacters in characteristic zero but
better behaved in our setting where the prime $p$ may not be invertible.
Section \ref{sec:determinant} recalls some basic facts about determinants and proves
some criteria for establishing that a ratio of two determinants is again a determinant.

Finally, in section \ref{sec:eigenvariety}, we construct the eigenvariety.
We adapt Urban's construction from the setting of rigid analytic
spaces to the setting of adic spaces.

\section*{Acknowledgments}

I would like to thank my advisor, Eric Urban, for suggesting that I consider
the problem of adapting cohomological constructions of eigenvarieties to
include the boundary of weight space, and for helpful discussions.  I would
also like to thank David Hansen and Shrenik Shah for helpful discussions.
I would like to thank the anonymous referee for pointing out some mistakes
and providing many helpful suggestions for improving the exposition.
Discussions with Ana Caraiani and Christian Johansson prompted me to make some
improvements to the exposition in section \ref{sec:eisenstein}.
I would like to thank Sam Mundy for pointing out several mistakes in
section \ref{sec:eisenstein}.
I would like to thank Christian Johansson and James Newton for informing me about their work.
Some corrections to this paper were made during my time at Oxford (supported by Royal Society grant RP\textbackslash{}EA\textbackslash{}180020) and at MPIM.

\section{Modules over complete Tate rings} \label{sec:mod}

\subsection{Definitions}

We begin by recalling the framework necessary for defining
modules of locally analytic functions and distributions and for
defining eigenvarieties.
We will repeat the basic setup of \cite[\S 2]{Buz07} \cite[Appendice B]{AIP15}.
First, we recall the definition of a Tate ring \cite[\S 1]{Hub93}.
\begin{defn}
A \emph{Huber ring} is a topological ring $A$ such that there exists an open subring
$A_0 \subset A$ and a finitely generated ideal $I \subset A_0$ such that $A_0$ has
the $I$-adic topology.  We say that $A_0$ is a \emph{ring of definition} of $A$
and $I$ is an \emph{ideal of definition} of $A_0$.
 
A \emph{Tate ring} is a Huber ring $A$ such that some (equivalently, any) ring of definition
$A_0$ has an ideal of definition that is generated by a topologically nilpotent unit of $A$.
\end{defn}
In section \ref{sec:eigenvariety}, we will use the framework of adic
spaces to construct the eigenvariety.  Every analytic adic space
can be covered by open subsets of the form $\Spa(A,A^+)$ with $A$
complete Tate, so it is natural to consider this class of rings.

Throughout this section, $A$ will denote a complete Tate ring.

\begin{defn}
Let $X$ be a quasi-compact topological space, and let $M$ be a topological
abelian group.  We define $\CC(X,M)$ to be the space of continuous functions
$X \to M$, with the topology of uniform convergence.
\end{defn}

\begin{defn}
Let $S$ be a set, and let $M$ be a topological abelian group.
We define $c(S,M)$ to be the space of functions $f \colon S \to M$ such that
for any open neighborhood $U$ of the identity in $M$, the complement
of $f^{-1}(U)$ is finite.
We give $c(S,M)$ the topology of uniform convergence.
\end{defn}

\begin{defn}
Let $M$ be a topological $A$-module.  We say that $M$ is
\emph{orthonormalizable} if it is isomorphic to
$c(S,A)$ for some set $S$.  We say that $M$ is \emph{projective} if
it is a direct summand of an orthonormalizable $A$-module.
\end{defn}

\begin{defn}
Let $M$ be a topological $A$-module.  We say that a set
$B \subset M$ is \emph{bounded} if for all open neighborhoods $U$ of the
identity in
$M$, there exists $\alpha \in A^{\times}$ so that $\alpha B \subseteq U$.
\end{defn}

\begin{defn}
Let $M$ and $N$ be topological $A$-modules.
We define $\LL_b(M,N)$ to be
the set of continuous $A$-module homomorphisms $M \to N$, with the topology
of convergence on bounded subsets.
\end{defn}

\begin{defn}
Let $M$ and $N$ be topological $A$-modules.
We say that an $A$-module homomorphism $M \to N$ has
\emph{finite rank} if its image is a finitely presented $A$-module.
We say that an element of $\LL_b(M,N)$ is
\emph{completely continuous} if it is in the closure of the subspace of
finite rank elements.
\end{defn}

\subsection{Spectral theory} \label{sec:spectral}

\begin{defn} \label{def:fredholmtate}
We define $A \fs{X}$ to be the set of power series
$P(X)=\sum_{n=0}^{\infty} a_n X^n$, $a_n \in A$,
such that for any $\alpha \in A^{\times}$, $\alpha^{-n} a_n \to 0$ as $n \to \infty$.

We say that $P(X) \in A \fs{X}$ is a \emph{Fredholm series} if it has
leading coefficient $1$.
\end{defn}
In section \ref{sec:eigenvariety}, we will consider the adic space
$\Spa(A,A^+)$ for certain complete Tate $\Zp$-algebras $A$.
If the adic space $\Spa(A,A^+) \times \mathbb{A}^1$ exists, then
$A \fs{X}$ is its ring of global sections.

Assume $A$ is Noetherian.
If $M$ is a projective $A$-module and $u \colon M \to M$ is completely
continuous, then we define the Fredholm series
$\det(1-Xu) \in A \fs{X}$ as in \cite[\S 2]{Buz07} \cite[\S B.2.4]{AIP15}.  
To define the series, we express $M$ as a direct summand
of an orthonormalizable $A$-module
$c(S,A)$ and extend $u$ to a map $c(S,A) \to c(S,A)$ by having it act as zero
on the orthogonal complement of $M$.
The module $c(S,A)$ has a basis consisting of functions sending
a single element of $S$ to $1$ and the rest to $0$.
We consider the matrix of $u$ in this basis.
The series $\det(1-Xu)$ is defined to be limit of the characteristic
polynomials of finite dimensional submatrices of this matrix.
The series does not depend on the choice of embedding.

As in \cite{Urb11}, we will need to work with complexes.
Let $M^{\bullet}$ be a bounded
complex of projective $A$-modules.  We will
say that $u^{\bullet} \colon M^{\bullet} \to M^{\bullet}$ is
completely continuous if each $u^i$ is completely
continuous.  If $u^{\bullet}$ is completely continuous,
then we define
\[ \det(1-Xu^{\bullet}) \colonequals \prod_i \det(1-Xu^i)^{(-1)^i} \,. \]

\begin{lem} \label{lem:detinvt}
Let $M^{\bullet}$ be a bounded complex of projective $A$-modules,
and let $u^{\bullet}, v^{\bullet} \colon M^{\bullet} \to M^{\bullet}$
be completely continuous maps that are homotopy equivalent.
Then $\det(1-Xu^{\bullet}) = \det(1-Xv^{\bullet})$.
\end{lem}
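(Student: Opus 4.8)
The plan is to reduce the statement for complexes to the corresponding statement for a single completely continuous endomorphism, and then to handle that case by a direct matrix computation in the orthonormalizable setting. Recall that $\det(1-Xu^{\bullet}) = \prod_i \det(1-Xu^i)^{(-1)^i}$, so it suffices to relate the alternating product for $u^{\bullet}$ to that for $v^{\bullet}$. Since $u^{\bullet}$ and $v^{\bullet}$ are homotopy equivalent (say via a chain homotopy $h$ with $u^i - v^i = d^{i-1}h^i + h^{i+1}d^i$), the standard trick is to form the mapping-cone-type auxiliary complex. Concretely, I would consider the bounded complex $N^{\bullet} = M^{\bullet} \oplus M^{\bullet-1}$ (the cone of the identity, up to shift) with the endomorphism built out of $u^{\bullet}$, $v^{\bullet}$, and $h$ arranged so that it is block-triangular with diagonal blocks $u^i$ and $v^{i-1}$; because the off-diagonal contribution involves $d$ and $h$, which are finite-rank-approximable composed with completely continuous maps, the whole endomorphism is completely continuous, and its Fredholm determinant is computed from the block-triangular structure.

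The key steps, in order, are as follows. First, I would record the elementary multiplicativity property: if $0 \to M_1^{\bullet} \to M_2^{\bullet} \to M_3^{\bullet} \to 0$ is a split short exact sequence of bounded complexes of projective $A$-modules (split as graded modules) compatible with completely continuous endomorphisms $u_j^{\bullet}$, then $\det(1-Xu_2^{\bullet}) = \det(1-Xu_1^{\bullet})\det(1-Xu_3^{\bullet})$; this follows degreewise from the analogous fact for a single endomorphism, which in turn follows from writing the matrix in block-triangular form with respect to a compatible orthonormal basis and taking the limit of characteristic polynomials of finite submatrices. Second, I would observe that for the "shift" complex $M^{\bullet-1}$ with endomorphism $v^{\bullet-1}$ one has $\det(1-Xu^{\bullet-1}) = \det(1-Xu^{\bullet})^{-1}$, directly from the definition of the alternating product. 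Third, I would note that an endomorphism $w^{\bullet}$ of a bounded complex that is \emph{null-homotopic} has $\det(1-Xw^{\bullet}) = 1$: indeed $w^{\bullet}$ is then homotopy equivalent to $0$, and more usefully, the cone construction exhibits its determinant as a ratio that telescopes; alternatively one argues that a null-homotopic endomorphism factors appropriately. Finally, applying multiplicativity to the cone of $\id_{M^{\bullet}}$ equipped with the endomorphism assembled from $u^{\bullet}$ and $v^{\bullet}$ (which differs from the "diagonal" endomorphism by a null-homotopic one precisely because $h$ witnesses $u-v$ as a boundary) yields $\det(1-Xu^{\bullet})\det(1-Xv^{\bullet})^{-1} = 1$, which is the claim.

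I expect the main obstacle to be the first step: proving cleanly that the Fredholm series is multiplicative in split short exact sequences of \emph{modules} with completely continuous endomorphisms, in the generality where $p$ need not be invertible and $A$ is merely Noetherian complete Tate. One must be careful that "projective" here means a direct summand of an orthonormalizable module, so $M_2$ may be written as $c(S,A) = c(S_1,A) \oplus c(S_2,A)$ compatibly only after choosing splittings; the endomorphism $u_2$ extended by zero to $c(S,A)$ is then block \emph{upper}-triangular with respect to the ordered decomposition, and one needs that the limit defining $\det(1-Xu_2)$ respects this block structure. This is essentially the content of \cite[\S B.2.4]{AIP15} and \cite[\S 2]{Buz07}, and the point is that the characteristic polynomial of any finite principal submatrix is the product of the characteristic polynomials of its diagonal blocks up to terms that vanish in the limit; making this precise with the topologically-nilpotent-unit estimates rather than a norm is the only genuinely technical piece, and everything else is formal manipulation of alternating products. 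Once multiplicativity is in hand, the cone argument is short and the lemma follows.
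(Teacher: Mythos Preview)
Your cone approach has a genuine circularity. In step 3 you assert that a null-homotopic completely continuous endomorphism $w^{\bullet}$ satisfies $\det(1-Xw^{\bullet})=1$, but this is precisely the special case $v^{\bullet}=0$ of the lemma. Neither justification you offer avoids this: applying the cone construction to the pair $(w^{\bullet},0)$ returns the tautology $\det(1-Xw^{\bullet})=\det(1-Xw^{\bullet})$, and ``factors appropriately'' is not an argument. Step 4 is worse. The ``diagonal'' endomorphism $\mathrm{diag}(u^i,v^{i+1})$ on the cone of $\id_{M^{\bullet}}$ is \emph{not} a chain map (the commutator with $d_N$ has off-diagonal entry $u-v$), so there is nothing to compare. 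If instead you compare your block-triangular $w^{\bullet}$ to $\mathrm{diag}(u^i,u^{i+1})$, which is a chain map and whose alternating determinant visibly telescopes to $1$, then the needed equality $\det(1-Xw^{\bullet})=\det(1-X\,\mathrm{diag}(u,u)^{\bullet})$ is again the full lemma, since ``$w-w'$ null-homotopic $\Rightarrow \det(1-Xw)=\det(1-Xw')$'' is not a consequence of step 3 (the determinant is not additive in the endomorphism). There is also a technical problem you gloss over: the homotopy $h$ need not be completely continuous, so your assembled $w^i=\begin{pmatrix} u^i & h^{i+1}\\ 0 & v^{i+1}\end{pmatrix}$ need not be completely continuous, and $\det(1-Xw^i)$ is not a priori defined.

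The paper sidesteps all of this by reducing the determinant identity to a \emph{trace} identity, where the telescoping actually works. For each $k\ge 0$ one forms the super-symmetric power complex $\SSym^k M^{\bullet}$ and shows that the coefficient of $X^k$ in $\det(1-Xu^{\bullet})^{-1}$ is $\tr(\SSym^k u^{\bullet})$, via the factorization
\[
\sum_{k\ge 0} X^k\,\tr\SSym^k u^{\bullet}=\prod_{i\text{ even}}\det(1-Xu^i)^{-1}\prod_{i\text{ odd}}\det(1-Xu^i).
\]
Since $u^{\bullet}\simeq v^{\bullet}$ implies $\SSym^k u^{\bullet}\simeq \SSym^k v^{\bullet}$, it suffices to show that homotopic completely continuous chain maps have equal alternating trace; this is \cite[Lemma~2.2.8]{Urb11}, proved by writing $u-v=dh+hd$ and using $\tr(fg)=\tr(gf)$ so that the alternating sum telescopes. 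Your multiplicativity step is fine but is not where the difficulty lies; the real content is this passage to traces, which your outline never reaches.
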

\begin{proof}
For each nonnegative integer $k$,
define the complex $\SSym^k M^{\bullet}$ so that
$(\SSym^k M)^i$ is generated by formal products of $k$ homogeneous elements of $M^{\bullet}$
of total degree $i$, subject to the relation a pair of homogeneous elements
anticommutes if both have odd degree and commutes otherwise.
The differential $d \colon (\SSym^k M)^i \to (\SSym^k M)^{i+1}$ is defined by
$d(m_1 m_2 \dotsm m_k) = (dm_1) m_2 \dotsm m_k + (-1)^{\deg m_1} m_1 (dm_2) \dotsm m_k + \dotsb + (-1)^{i - \deg m_k} m_1 m_2 \dotsm (dm_k)$.
The maps $u^{\bullet}$, $v^{\bullet}$ induce endomorphisms
$\SSym^k u^{\bullet}$, $\SSym^k v^{\bullet}$ on
$\SSym^k M^{\bullet}$, and these are completely continuous and
homotopy equivalent.
We claim that the coefficient of $X^k$ in $\det(1-Xu^{\bullet})^{-1}$
is $\tr \SSym^k u^{\bullet}$.
Indeed, there is a decomposition
\begin{align*}
\sum_{k=0}^{\infty} X^k \tr \SSym^k u^{\bullet} & = \prod_{i \equiv 0(2)} \left( \sum_{k=0}^{\infty} X^k \tr \Sym^k u^i \right) \prod_{i \equiv 1 (2)} \left( \sum_{k=0}^{\infty} (-X)^k \tr \wedge^k u^i \right) \\
& = \prod_{i \equiv 0(2)} \det(1-X u^i)^{-1} \prod_{i \equiv 1 (2)} \det(1-Xu^i) \,.
\end{align*}

Therefore it suffices to show that for each $k$, $\SSym^k u^{\bullet}$ and
$\SSym^k v^{\bullet}$ have the same trace.  Then we may use the argument
of \cite[Lemma 2.2.8]{Urb11}.
\end{proof}

\subsection{Norms} \label{sec:norm}

It is often convenient to work with norms on $A$ and on $A$-modules.
\begin{defn}
Let $\alpha$ be a topologically nilpotent unit of $A$.
We define an \emph{$\alpha$-Banach norm} on $A$
to be a continuous map $|\cdot| \colon A \to \RR^{\ge 0}$ satisfying
the following conditions.
\begin{itemize}
\item $|a+b| \le \max(|a|,|b|) \quad \forall a,b \in A$
\item $|ab| \le |a||b| \quad \forall a,b \in A$
\item $|0|=0,\, |1|=1,\, |\alpha| |\alpha^{-1}| = 1$
\item The norm $|\cdot|$ induces the topology of $A$.
\end{itemize}
\end{defn}

\begin{defn}
Let $\alpha$ be a topologically nilpotent unit of $A$, let
$|\cdot|$ be an $\alpha$-Banach norm on $A$, and let
$M$ be a topological $A$-module.  We define a \emph{$|\cdot|$-compatible norm}
on $M$ to be a continuous map $\| \cdot \| \colon M \to \RR^{\ge 0}$ satisfying
the following conditions.
\begin{itemize}
\item $\|m+n\| \le \max(\|m\|,\|n\|) \quad \forall m,n \in M$
\item $\|am\| \le |a|\|m\| \quad \forall a \in A, \, m \in M$
\item $\|0\| = 0$
\end{itemize}
If, in addition, $\| \cdot \|$ induces the topology of $M$, we say that
$\| \cdot \|$ is a \emph{Banach norm}.
\end{defn}

For any topologically nilpotent unit $\alpha \in A^{\times}$ and ring
of definition $A_0$ of $A$ containing $\alpha$,
the function $|\cdot| \colon A \to \RR^{\ge 0}$ defined by
\[ |a|=\inf_{n \in \ZZ | \alpha^n a \in A_0} p^{n} \]
is an $\alpha$-Banach norm.

Furthermore, if $M$ is a topological $A$-module and $M_0$ is an open neighborhood
of zero in $M$ that is an $A_0$-module, then the function $\|\cdot\| \colon M \to \RR^{\ge 0}$
defined by 
\[ \|m\|=\inf_{n \in \ZZ | \alpha^n m \in M_0} p^{n} \]
is a norm compatible with $| \cdot |$.  If the sets of the form $\alpha^n M_0$ are a basis
of open neighborhoods of zero, then this norm is Banach.

\section{Locally analytic functions and distributions} \label{sec:la}

Now let $A$ be a complete Tate $\Zp$-algebra, and let $X$ be
a locally $\Qp$-analytic manifold.  In this section, we will
define modules $\AA(X,A)$ and $\DD(X,A)$ of ``locally analytic''
$A$-valued functions and distributions on $X$.

The space $X$ can be covered by coordinate patches isomorphic
to $\Zp^k$ for some $k$.  We will first define locally analytic
functions on these patches and then show that the construction
can be glued.

Naively, one might try to define a function $\Zp^k \to A$ to be locally
analytic if it has a power series expansion in a neighborhood of any point.
However, this definition turns out not to be suitable for applications to
overconvergent cohomology.  In section \ref{sec:hecketop}, it will be important
that any continuous homomorphism $\Zp^k \to A^{\times}$ is in $\AA(\Zp^k,A)$.
The homomorphism $\Zp \to \Fp((T))^{\times}$ that sends $z \mapsto (1+T)^z$ does
not have a power series expansion on any open subset of $\Zp$.  Our criterion
for local analyticity will instead be based on Mahler expansions.

\subsection{Preliminaries}

We will recall some basic facts from $p$-adic functional analysis.

We will make use of the completed group ring $\Zp \db{\Zp^k} = \varprojlim_n \Zp [\Zp^k/p^n \Zp^k]$.

For $z \in \Zp^k$, let $[z]$ denote the corresponding group-like element of $\Zp \db{\Zp^k}$,
and let $\Delta_{z} = [z]-[0]$.  Let $I_{\Delta}$ denote the augmentation
ideal of $\Zp \db{\Zp^k}$; this is the ideal generated by the $\Delta_z$.
The ring $\Zp \db{\Zp^k}$ is local with maximal ideal $(p)+I_{\Delta}$.

We let $\Zp^k$ act on $\CC(\Zp^k,M)$ by translation: for $g \in \CC(\Zp^k,M)$,
$(zg)(y)=g(y+z)$.  This action extends to an action of $\Zp \db{\Zp^k}$.

We adopt the convention that $\NN$ is the set of nonnegative integers.
To simplify notation, if $z=(z_1,\dotsc,z_k) \in \Zp^k$, and $n=(n_1,\dotsc,n_k) \in \NN^k$, we will write
$\binom{z}{n}$ for $\prod_{i=1}^k \binom{z_i}{n_i}$,
and we will write $\sum n$ for $\sum_{i=1}^k n_i$.

\begin{lem}[Mahler's theorem, {\cite[Th\'eor\`eme II.1.2.4]{Laz65}}] \label{lem:mahler}
Let $M$ be a complete topological $\Zp$-module.  Suppose that $M$ has a basis
of open neighborhoods of zero that are subgroups of $M$.
There is an isomorphism $c(\NN^k,M) \isom \CC(\Zp^k,M)$
that sends $f \in c(\NN^k,M)$ to a function $g \in \CC(\Zp^k,M)$ defined by
\[ g(z) = \sum_{n \in \NN^k} f(n) \binom{z}{n} \,. \]
\end{lem}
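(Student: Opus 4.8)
The plan is to reduce the general statement to the classical one-variable Mahler theorem over $\Zp$ (which the paper cites from Lazard) and then handle the passage to coefficients in an arbitrary complete topological $\Zp$-module $M$ and to several variables. First I would observe that the map
\[ \Phi \colon c(\NN^k,M) \to \CC(\Zp^k,M), \qquad f \mapsto \Big( z \mapsto \sum_{n \in \NN^k} f(n) \binom{z}{n} \Big) \]
is well defined: for fixed $z \in \Zp^k$ the binomial coefficients $\binom{z}{n}$ lie in $\Zp$, so $f(n)\binom{z}{n}$ ranges over a set whose tail is eventually inside any chosen open subgroup $U \ni 0$ (since $f \in c(\NN^k,M)$ means $f(n) \in U$ for all but finitely many $n$), and since $M$ is complete the series converges; moreover the convergence is uniform in $z$ because the bound on $f(n)$ is independent of $z$, so $\Phi(f)$ is continuous. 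One also checks $\Phi$ is continuous for the topologies of uniform convergence, essentially because $\sup_z \|\sum_n f(n)\binom{z}{n}\| \le \sup_n \|f(n)\|$ in the sense of ``belongs to $U$''.

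Next I would prove $\Phi$ is injective and surjective by induction on $k$, reducing to $k=1$. For $k=1$ and $M$ general: injectivity follows from the finite-difference inversion formula $f(n) = (\Delta^n g)(0) = \sum_{j=0}^n (-1)^{n-j}\binom{n}{j} g(j)$, which recovers the coefficients $f(n)$ from $g = \Phi(f)$ and shows $\Phi$ is a bijection onto its image with continuous inverse on that image. The real content is surjectivity: given $g \in \CC(\Zp,M)$, define $f(n) \colonequals (\Delta^n g)(0)$ and show (a) $f \in c(\NN,M)$, i.e. $f(n) \to 0$, and (b) $\Phi(f) = g$. This is exactly Mahler's theorem as stated in Lemma \ref{lem:mahler}'s cited source; rather than reprove it, I would invoke Lazard's result directly for general $M$ with a basis of open subgroup neighborhoods of $0$ (which is the hypothesis), since that is precisely the generality in which Lazard works. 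Then for the inductive step, write $\Zp^k = \Zp^{k-1} \times \Zp$ and use the exponential-law type identity $\CC(\Zp^{k-1}\times\Zp, M) \cong \CC(\Zp^{k-1}, \CC(\Zp, M))$ together with the fact that $\CC(\Zp,M)$, being a closed subspace of a product of copies of $M$, again has a basis of open-subgroup neighborhoods of zero, so the inductive hypothesis applies with $M$ replaced by $\CC(\Zp,M)$; finally one identifies $c(\NN^{k-1}, c(\NN, M))$ with $c(\NN^k, M)$ and checks the binomial product $\binom{z}{n} = \prod_i \binom{z_i}{n_i}$ matches up with the iterated construction.

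I expect the main obstacle to be purely bookkeeping rather than conceptual: verifying that the topological identifications $\CC(\Zp^{k-1}\times\Zp,M)\cong\CC(\Zp^{k-1},\CC(\Zp,M))$ and $c(\NN^{k-1},c(\NN,M))\cong c(\NN^k,M)$ are homeomorphisms compatible with $\Phi$, and that ``has a basis of open neighborhoods of zero which are subgroups'' is inherited by $\CC(\Zp,M)$ — the latter because a basic open subgroup of $\CC(\Zp,M)$ can be taken of the form $\{h : h(\Zp)\subseteq U\}$ for $U$ a basic open subgroup of $M$. Since the paper is quoting Mahler's theorem from Lazard as a black box, the cleanest write-up is to state that the $k=1$ case is exactly \cite[Th\'eor\`eme II.1.2.4]{Laz65} and then spell out the two reductions above; no hard estimate is needed beyond the uniform-convergence observation already made for well-definedness.
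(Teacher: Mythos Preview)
Your proposal is correct, but note that the paper does not actually give a proof of this lemma at all: it is stated with the attribution \cite[Th\'eor\`eme II.1.2.4]{Laz65} and no proof environment follows. The paper simply treats the full statement (general complete $M$ with a basis of open subgroup neighborhoods, and $k$ variables) as a black-box citation of Lazard.

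What you have written is therefore strictly more than the paper offers: you supply the well-definedness and continuity of $\Phi$, the finite-difference inverse for injectivity, and the reduction of the $k$-variable case to the one-variable case via the exponential-law identifications $\CC(\Zp^{k-1}\times\Zp,M)\cong\CC(\Zp^{k-1},\CC(\Zp,M))$ and $c(\NN^{k-1},c(\NN,M))\cong c(\NN^k,M)$. These identifications are correct (the second one is the only place requiring a moment's thought, and the argument you sketch---finitely many bad $(m,n)$ split into finitely many bad $m$ plus, for each, finitely many bad $n$---goes through). Your observation that $\CC(\Zp,M)$ is again complete with a basis of open subgroup neighborhoods is also fine. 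So your write-up is a legitimate proof; it just happens that the paper is content to point to Lazard and move on.
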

We say that the right-hand side of the above equation is
the Mahler expansion of $g$.

\begin{lem}[Amice's theorem] \label{lem:la}
Let $F$ be a closed subfield of $\Cp$, and let
$LA_h(\Zp^k,F)$ be the space of functions
$\Zp^k \to F$ that extend to an analytic function $\Zp^k + p^h \OCp^k \to \Cp$.
For $f \in LA_h(\Zp^k,F)$, define
\[ |f| \colonequals \sup_{z \in \Zp^k + p^h \OCp^k} |f(z)|_p \,. \]
Then the functions
$\left\lfloor \frac{n_1}{p^h} \right\rfloor! \dotsm \left\lfloor \frac{n_k}{p^h} \right\rfloor! \binom{z}{n}$
form an orthonormal basis for the Banach space $LA_h(\Zp^k,F)$.
In other words,
every $f \in LA_h(\Zp^k,F)$ can be expressed uniquely in the form
\[ f(z) = \sum_{n \in \NN^k} a_{n} \left\lfloor \frac{n_1}{p^h} \right\rfloor! \dotsm \left\lfloor \frac{n_k}{p^h} \right\rfloor! \binom{z}{n} \,, \]
and $|f|=\sup_{n \in \NN^k} |a_n|_p$.
\end{lem}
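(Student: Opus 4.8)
The plan is to reduce to the one-variable case $k=1$ and then identify the two natural descriptions of $LA_h(\Zp,F)$: one via Taylor expansions on each residue disc $a + p^h\OCp$, and one via the Mahler-type basis $\fl{n/p^h}!\binom{z}{n}$. First I would record the multivariable reduction: a function on $\Zp^k + p^h\OCp^k$ is analytic iff it is analytic in each variable separately (by the several-variable analogue of the argument below, or by noting that $LA_h(\Zp^k,F)$ is the completed tensor product of $k$ copies of $LA_h(\Zp,F)$), and the stated basis is the product of the one-variable bases, with the sup norm being the product norm; orthonormality of a tensor product of orthonormal bases is then formal. So it suffices to treat $k=1$.

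For $k=1$: decompose $\Zp = \bigsqcup_{a=0}^{p^h-1}(a+p^h\Zp)$. A function $f$ lies in $LA_h(\Zp,F)$ iff on each disc $a+p^h\OCp$ it is given by a convergent power series $f(z) = \sum_{j\ge 0} c_{a,j}\left(\frac{z-a}{p^h}\right)^j$ with $c_{a,j}\to 0$, and $|f| = \sup_{a,j}|c_{a,j}|_p$; this is the standard description of functions analytic on a disc of radius $|p^h|_p$, with the Gauss norm computing the sup over the (larger) $\OCp$-disc. The key step is the classical identity, for fixed $a$ and the integer $m = \fl{n/p^h}$, relating the "integer" binomial polynomials to these rescaled monomials: one shows that $\left\{\,p^{hj}\binom{z}{\,?\,}\text{-type polynomials}\,\right\}$ and $\left\{\left(\frac{z-a}{p^h}\right)^j\right\}$ span the same $\OCp$-lattice in the polynomials of degree $<p^h(j+1)$ restricted to $a+p^h\OCp$. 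More precisely, I would use the fact that for $n = p^h m + s$ with $0\le s<p^h$, the polynomial $\binom{z}{n}$ restricted to the disc $a+p^h\OCp$ has Gauss norm exactly $|1/\fl{n/p^h}!|_p \cdot |p^h|_p^{?}$-free in the sense that $\fl{n/p^h}!\binom{z}{n}$ has Gauss norm $1$ on that disc and these, as $n$ ranges over a residue class pattern, triangulate against the monomial basis with unit-matrix coefficients. Running this over all residue discs simultaneously gives that $\left\{\fl{n/p^h}!\binom{z}{n}\right\}_{n\in\NN}$ is an orthonormal basis of $LA_h(\Zp,F)$.

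Concretely the steps are: (1) reduce $k>1$ to $k=1$ via tensor products; (2) for $k=1$, establish the disc-wise Taylor description of $LA_h(\Zp,F)$ and its Gauss norm, citing the standard theory of analytic functions on affinoid discs; (3) prove the integrality statement that $\fl{n/p^h}!\binom{z}{n}$ is a unit-norm element of $LA_h(\Zp,F)$ and that $\{\fl{n/p^h}!\binom{z}{n} : p^h\mid n-\text{(arithmetic condition)}\}$ forms a triangular change of basis against $\left\{\binom{a}{?}\left(\frac{z-a}{p^h}\right)^j\right\}$ on each residue disc, with transition matrix in $GL$ over $\OCp$; (4) conclude orthonormality and the norm formula $|f| = \sup_n|a_n|_p$. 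The main obstacle is step (3): carefully tracking the $p$-adic valuation of the coefficients appearing when one expands $\binom{z}{n}$ on a residue disc $a+p^h\Zp$ around the point $a$, i.e. the arithmetic of $v_p\!\left(\binom{p^h m + s}{\,j\,}\right)$ and the appearance of $\fl{n/p^h}! = m!$ as precisely the denominator needed to clear the disc down to radius $|p^h|_p$. This is exactly where Amice's theorem has its content; once the valuation bookkeeping is done the rest is formal linear algebra over the (spherically complete, or at least Banach) field $F$. Since the statement is classical, in the paper I would either give this argument in brief or simply cite Amice \cite{Ama64} (or Colmez's exposition) for the one-variable case and note the tensor-product reduction for general $k$.
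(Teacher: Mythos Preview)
Your proposal is correct and aligns with the paper's treatment: the paper does not reprove the result but simply cites Amice \cite[Chapitre~3]{Ami64} (and Colmez \cite[Th\'eor\`eme~I.4.7]{Col10}) for the statement, which is exactly what you suggest doing at the end of your sketch. Your outlined argument (tensor reduction to $k=1$, disc-wise Taylor description, and the valuation computation showing $\lfloor n/p^h\rfloor!\binom{z}{n}$ has unit Gauss norm) is the standard route and is more detailed than what the paper provides.
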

\begin{proof}
This follows from \cite[Chapitre 3]{Ami64} (see also \cite[Th\'eor\`eme I.4.7]{Col10}).
\end{proof}

The following formulas concerning the
$p$-adic valuations of $n!$, where $n$ is a nonnegative integer,
are well-known.
\[ v_p(n!) = \sum_{k=1}^{\infty} \left\lfloor \frac{n}{p^k} \right\rfloor \]
\[ \frac{n}{p-1} - \log_p (n+1) \le v_p(n!) \le \frac{n}{p-1} \]
Consequently, if $F$ is a closed subfield of $\Cp$, and $f \colon \Zp^k \to F$
is a continuous function with the Mahler expansion
$f(z)=\sum_{n \in \NN^k} a_{n} \binom{z}{n}$,
then $f$ is locally analytic if and only if
$|a_{n}|_p$ go to zero exponentially in $\sum n$.

\subsection{Definitions}

The above facts suggest that we should define a function $\Zp^k \to A$ to be
``locally analytic'' if the coefficients of its Mahler expansion
decrease to zero exponentially.

We choose a topologically nilpotent $\alpha \in A^{\times}$.

\begin{defn} \label{def:lafunc}
Let $r \in \RR^+$.  We define $\Aar(\Zp^k,A)$ to be the space of functions
$f \in \CC(\Zp^k,A)$ such that for any open neighborhood $U$ of zero in
$A$,
there exists $N \in \NN$ so that for all integers $n>N$
and all $\delta \in I_{\Delta}^n$,
$\alpha^{\lfloor -rn \rfloor} \delta f \in \CC(\Zp^k,U)$.

For any open neighborhood $U$ of zero in $A$, we define
$U_r \subset \Aar(\Zp^k,A)$ to be the set of all $f \in \Aar(\Zp^k,A)$ such that
$\alpha^{\lfloor -rn \rfloor} \delta f \in \CC(\Zp^k,U)$ for all $n \in \NN$
and all $\delta \in I_{\Delta}^n$.  We define a topology on $\Aar(\Zp^k,A)$
by making sets of the form $U_r$ a basis of open neighborhoods of zero.

We define $\AA(\Zp^k,A) \colonequals \varinjlim_r \Aar(\Zp^k,A)$.
\end{defn}
We will not choose a topology on $\AA(\Zp^k,A)$.

The connection between this definition and Mahler expansions will
be explained by Lemma \ref{lem:laconcrete}.

The definition of $\Aar(\Zp^k,A)$ is invariant under affine changes of coordinates.

For any topologically nilpotent unit $\alpha' \in A^{\times}$ and sufficiently
small $r' \in \RR^+$, $\AA^{(\alpha',r')}(\Zp^k,A)$ injects into $\Aar(\Zp^k,A)$.
So the directed systems $(\AA^{(\alpha,r)}(\Zp^k,A))_{r \in \RR^+}$ and
$(\AA^{(\alpha',r)}(\Zp^k,A))_{r \in \RR^+}$ are cofinal, and $\AA(\Zp^k,A)$ does not depend on the choice of $\alpha$.
If $F$ is a closed subfield of $\Cp$, then by Lemma \ref{lem:la},
there are continuous injections with dense image
\[\AA^{(p,1/(p-1)p^h)}(\Zp^k,F) \hookrightarrow LA_h(\Zp^k,F) \hookrightarrow \AA^{(p,r)}(\Zp^k,F) \]
for any $r<\frac{1}{(p-1)p^h}$, so the directed systems
$(\AA^{(p,r)}(\Zp^k,F))_{r \in \RR^+}$ and $(LA_h(\Zp^k,F))_{h \in \NN}$
are also cofinal.

The module $\Aar(\Zp^k,A)$ can also be defined (albeit less symmetrically)
using $\alpha$-Banach norms.  Choose a ring of definition $A_0$ of $A$ containing $\alpha$,
and define an $\alpha$-Banach norm $|\cdot| \colon A \to \RR^{\ge 0}$ as in section \ref{sec:norm}.
Define $\| \cdot \|_0 \colon \CC(\Zp^k,A) \to \RR^{\ge 0}$ by
\[ \|f\|_0 = \sup_{z \in \Zp^k} |f(z)| \,. \]
The sets $\{ f \in A | \|f\|_0 \le s \}$, $s \in \RR^{\ge 0}$, form a basis of open neighborhoods
of the identity in $A$.  Hence in Definition \ref{def:lafunc}, we can restrict our attention to
neighborhoods of this form.  Therefore
\[ \Aar(\Zp^k,A) = \left\{ f \in \CC(\Zp^k,A) \middle| \limsup_{n \to \infty} \sup_{\delta \in I_{\Delta}^n} \|\alpha^{\lfloor -rn \rfloor} \delta f\|_0 = 0 \right\} \,, \]
and the topology on $\Aar(\Zp^k,A)$ is induced by the norm $\|\cdot\|_r \colon \Aar(\Zp^k,A) \to \RR^{\ge 0}$
defined by
\[ \|f\|_r = \sup_{n \in \NN} \sup_{\delta \in I_{\Delta}^n} \|\alpha^{\lfloor -rn \rfloor} \delta f\|_0 \,. \]
The functions $\| \cdot \|_0$ and $\| \cdot \|_r$ are Banach norms compatible with $|\cdot|$.

Presumably, it would be reasonable to define $\Aar(\Zp^k,M)$ and $\AA(\Zp^k,M)$ for
any topological $A$-module $M$ that is locally convex
in the sense that for
some (equivalently, any) ring of definition $A_0$ of $A$, $M$ has a basis
of open neighborhoods of the identity that are $A_0$-modules.
(We would just replace $A$ with $M$ in the above definition.)
However,
we will not need this additional generality.

\begin{defn} \label{def:ladist}
Let $r \in \RR^+$.
We define $\Dar(\Zp^k,A)$ to be the closure of the image of
$\LL_b(\CC(\Zp^k,A),A)$ in $\LL_b(\Aar(\Zp^k,A),A)$.

We define $\DD(\Zp^k,A) = \varprojlim_r \Dar(\Zp^k,A)$.
\end{defn}
The definition of $\DD(\Zp^k,A)$ does not depend on the choice of $\alpha$.

We chose the definitions of $\Aar(\Zp^k,A)$ and $\Dar(\Zp^k,A)$ so that these modules
would be orthonormalizable, as we will now show.
\begin{lem} \label{lem:laconcrete}
There is an isomorphism  $\Ser \colon c(\NN^k,A) \isom \Aar(\Zp^k,A)$ that sends
$f \in c(\NN^k,A)$ to a function $g \in \Aar(\Zp^k,A)$ defined by
\begin{equation} \label{eq:orthoa}
g(z)=\sum_{n \in \NN^k} \alpha^{\cl{r \sum n}}
f(n) \binom{z}{n} \,.
\end{equation}
Moreover, if $c(\NN^k,A)$ is given the supremum norm and $\Aar(\Zp^k,A)$
is given the norm $\| \cdot \|_r$, then $\Ser$ is an isometry.

There is an isomorphism $\Ev \colon \Dar(\Zp^k,A) \isom c(\NN^k,A)$ that sends
$\phi \in \Dar(\Zp^k,A)$ to
a function $h \in c(\NN^k,A)$ defined by
\begin{equation} \label{eq:orthod}
h(n) = \alpha^{\cl{r \sum n}}\phi\left( \binom{z}{n} \right) \,.
\end{equation}

Hence $\Aar(\Zp^k,A)$ and $\Dar(\Zp^k,A)$ are orthonormalizable.
\end{lem}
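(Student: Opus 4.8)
The plan is to prove the two isomorphisms essentially by unwinding the definitions and invoking Mahler's theorem (Lemma \ref{lem:mahler}) together with the characterization of $I_\Delta$-adic filtrations in terms of finite differences. First I would record the key computational fact linking the augmentation ideal to Mahler coefficients: for a continuous $g \in \CC(\Zp^k,A)$ with Mahler expansion $g(z) = \sum_n c_n \binom{z}{n}$, one has $\delta g \in \CC(\Zp^k,U)$ for all $\delta \in I_\Delta^m$ if and only if $c_n \in U'$ for all $n$ with $\sum n \ge m$, where $U'$ is a neighborhood comparable to $U$ (up to the bounded finitely-generated-ideal issues). Concretely, the operator $\Delta_{e_i}$ acts on Mahler expansions by the shift $\binom{z}{n} \mapsto \binom{z}{n - e_i}$, so $I_\Delta^m$ annihilates the ``low-degree'' part and shifts the ``high-degree'' part; iterating, $\delta g$ for $\delta$ ranging over $I_\Delta^m$ has Mahler coefficients that are $A_0$-linear (for any ring of definition $A_0$) combinations of the $c_n$ with $\sum n \ge m$. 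This is the technical heart and I expect it to be the main obstacle, since one must be careful that $I_\Delta$ is not principal and that ``open neighborhood of zero'' must be handled via the norm reformulation $\|\cdot\|_0$, $\|\cdot\|_r$ already set up in the text.

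Granting that, the isometry statement for $\Ser$ follows cleanly. Given $f \in c(\NN^k,A)$, set $g(z) = \sum_n \alpha^{\cl{r\sum n}} f(n)\binom{z}{n}$; since $\alpha$ is topologically nilpotent and $f(n) \to 0$, the coefficients $\alpha^{\cl{r\sum n}}f(n)$ tend to zero, so $g \in \CC(\Zp^k,A)$ by Mahler's theorem. Using the computational fact, $\sup_{\delta \in I_\Delta^n}\|\alpha^{\lfloor -rn\rfloor}\delta g\|_0$ is governed by $\sup_{\sum m \ge n} |\alpha^{\lfloor -rn\rfloor}\alpha^{\cl{r\sum m}} f(m)| \le \sup_{\sum m \ge n}|f(m)|$, because $\lfloor -rn \rfloor + \cl{r\sum m} \ge 0$ when $\sum m \ge n$ and $|\alpha^{\lfloor -rn\rfloor}\alpha^{\cl{r\sum m}}| = |\alpha|^{\lfloor -rn\rfloor + \cl{r \sum m}} \le 1$; conversely taking $m$ with $\sum m = n$ and $\delta = \Delta_{e_1}^{m_1}\cdots\Delta_{e_k}^{m_k}$ recovers $f(m)$ exactly up to a unit of norm $1$, giving the reverse inequality. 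Hence $\|g\|_r = \sup_n |f(n)| = \|f\|_{\sup}$, so $\Ser$ is a well-defined isometry; it lands in $\Aar(\Zp^k,A)$ because $\sup_{\sum m \ge n}|f(m)| \to 0$. Surjectivity: given $g \in \Aar(\Zp^k,A)$ with Mahler coefficients $c_n$, the membership condition forces $\alpha^{-\cl{r\sum n}}c_n \to 0$, so $f(n) \colonequals \alpha^{-\cl{r\sum n}}c_n$ defines an element of $c(\NN^k,A)$ with $\Ser(f) = g$; $A$-linearity and continuity of $\Ser$ and its inverse are immediate from the formula.

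For $\Ev$, the cleanest route is duality against the orthonormal basis $\{\alpha^{\cl{r\sum n}}\binom{z}{n}\}$ of $\Aar(\Zp^k,A) \cong c(\NN^k,A)$ just produced. By definition $\Dar(\Zp^k,A)$ is the closure in $\LL_b(\Aar(\Zp^k,A),A)$ of the image of $\LL_b(\CC(\Zp^k,A),A)$. I would invoke the standard fact (as in \cite[\S 2]{Buz07}) that for an orthonormalizable module $c(S,A)$ the dual $\LL_b(c(S,A),A)$ is identified with the module of bounded functions $S \to A$, while the closure of the finitely-supported (equivalently, of the ``$\CC$-continuous'') functionals is exactly $c(S,A)$ — i.e. those functionals whose values on basis vectors tend to zero. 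Under the isometry $\Ser$, the basis vector indexed by $n$ is $\alpha^{\cl{r\sum n}}\binom{z}{n}$, so a distribution $\phi$ in the closure is determined by $h(n) \colonequals \phi(\alpha^{\cl{r\sum n}}\binom{z}{n}) = \alpha^{\cl{r\sum n}}\phi(\binom{z}{n})$, and $\phi \mapsto h$ is an isomorphism onto $c(\NN^k,A)$. This is precisely formula \eqref{eq:orthod}. Finally, both $\Aar(\Zp^k,A)$ and $\Dar(\Zp^k,A)$ are then orthonormalizable since each is exhibited as isomorphic to $c(\NN^k,A)$; the one subtlety to check is that the image of $\LL_b(\CC(\Zp^k,A),A)$ in $\LL_b(\Aar(\Zp^k,A),A)$ really has closure all of $c(\NN^k,A)$ and not something smaller, which follows because point-mass-type functionals $g \mapsto c_n(g)$ extend continuously from $\CC$ to $\Aar$ and their finite $A$-linear combinations are dense in $c(\NN^k,A)$.
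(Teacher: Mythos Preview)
Your proposal is correct and follows essentially the same route as the paper: the paper likewise builds the inverse of $\Ser$ explicitly via $f(n)=\alpha^{-\cl{r\sum n}}(\Delta_{e_1}^{n_1}\dotsm\Delta_{e_k}^{n_k}g)(0)$, shows both maps have operator norm $\le 1$ (using exactly your two observations that $\|\delta h\|_0\le\|h\|_0$ and that $\delta\binom{z}{n}=0$ for $\delta\in I_\Delta^m$ when $\sum n<m$), and then treats $\Ev$ by the same dual-basis argument. The one point you gloss over is the containment in the \emph{other} direction for $\Ev$---that the image of $\LL_b(\CC(\Zp^k,A),A)$ actually lands inside $c(\NN^k,A)$ rather than merely in bounded functions---which the paper checks by noting that $n\mapsto\phi\!\left(\binom{z}{n}\right)$ is bounded for any $\phi\in\LL_b(\CC(\Zp^k,A),A)$, whence $\alpha^{\cl{r\sum n}}\phi\!\left(\binom{z}{n}\right)\to 0$.
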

\begin{proof}
Let $f \in c(\NN^k,A)$, and let $g$ be defined by \eqref{eq:orthoa}.  By
Mahler's theorem, $g \in \CC(\Zp^k,A)$.
We observe that for any $h \in \CC(\Zp^k,A)$ and $\delta \in \Zp \db{\Zp^k}$,
$\|\delta h\|_0 \le \|h\|_0$.
Furthermore, if $\delta \in I_{\Delta}^m$, then $\delta \binom{z}{n} = 0$ whenever $\sum n < m$.
So
\[ \begin{split}
\|\alpha^{\fl{-rm}} \delta g\|_0 \le & \sup_{\sum n \ge m} \left|\alpha^{\fl{-rm}+\cl{r \sum n}} f(n)\right| \\
\le & \sup_{\sum n \ge m} \left| f(n)\right| \,.
\end{split} \]
It follows that $g \in \Aar(\Zp^k,A)$, and $\Ser$
is has operator norm $\le 1$.

By Mahler's theorem, we can recover $f$ from $g$:
\begin{equation} \label{eq:mahler}
f(n) = \alpha^{\fl{-r \sum n}}(\Delta_{e_1}^{n_1} \dotsm \Delta_{e_k}^{n_k} g)(0) \,,
\end{equation}
where $e_1,\dotsc,e_k$ are the standard basis for $\Zp^k$.
Since
\[ |f(n)| \le \sup_{\delta \in (I_{\Delta})^{\sum n}} \| \alpha^{\fl{-r\sum n}}\delta g\|_0 \,, \]
the relation \eqref{eq:mahler} determines a map
$\Coeff \colon \Aar(\Zp^k,A) \to c(\Zp^k,A)$
that is a left inverse of $\Ser$, and $\Coeff$ has operator norm
$\le 1$.  To see that
$\Coeff$ is also a right inverse of $\Ser$, observe that
$(\Ser \circ \Coeff)(g)$ and $g$ agree on $\NN^k$, which is dense
in $\Zp^{k}$.  Since $\Ser$ and $\Coeff$ both have operator norm $\le 1$,
they must be isometries.

The map $\Ser$ induces an isomorphism
$\Ser^* \colon \LL_b(\Aar(\Zp^k,A),A) \isom \LL_b(c(\NN^k,A),A)$.
The pairing $c(\NN^k,A) \times c(\NN^k,A) \to A$ defined by
$(f,h) \mapsto \sum_{n \in \NN^k} f(n) h(n)$ identifies $c(\NN^k,A)$
isometrically with a closed submodule of $\LL_b(c(\NN^k,A),A)$.
For any $\phi \in \LL_b(\CC(\Zp^k,A),A)$,
the function $n \mapsto \phi\left(\binom{z}{n} \right)$ is bounded, so in particular
$\alpha^{\cl{r \sum n}} \phi\left(\binom{z}{n} \right) \to 0$ as
$\sum n \to \infty$.
Hence the image of $\Ser^*$ is contained in $c(\NN^k,A)$.  Furthermore,
the image contains all elements of $c(\NN^k,A)$ that are supported
on a finite subset of $\NN^k$, and these elements are dense in $c(\NN^k,A)$.
\end{proof}

Lemma \ref{lem:laconcrete} makes it clear that for $r'<r$,
there are natural injections
\[ \Dar[r'](\Zp^k,A) \hookrightarrow \LL_b(\Aar[r'](\Zp^k,A),A) \hookrightarrow \Dar(\Zp^k,A) \]
\[ \Aar(\Zp^k,A) \hookrightarrow \LL_b(\Dar(\Zp^k,A),A) \hookrightarrow \Aar[r'](\Zp^k,A) \,. \]

\subsection{Properties of locally analytic functions and distributions} \label{sec:laproperties}

In this section, we check that $\Aar(\Zp^k,A)$ has some properties that one would expect
of locally analytic functions.

\begin{lem} \label{lem:mult}
Multiplication induces a continuous map
$\Aar(\Zp^k,A) \times \Aar(\Zp^k,A) \to \Aar(\Zp^k,A)$.
\end{lem}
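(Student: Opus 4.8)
The plan is to trade the topological definition of $\Aar(\Zp^k,A)$ for the orthonormal coordinates provided by Lemma \ref{lem:laconcrete} and then reduce everything to an elementary ultrametric estimate on Mahler coefficients. Via the isometry $\Ser \colon c(\NN^k,A) \isom \Aar(\Zp^k,A)$, I would write the two inputs as
\[ f(z) = \sum_{n \in \NN^k} \alpha^{\cl{r \sum n}} a_n \binom{z}{n}, \qquad g(z) = \sum_{m \in \NN^k} \alpha^{\cl{r \sum m}} b_m \binom{z}{m}, \]
with $a_n, b_m \to 0$ and $\|f\|_r = \sup_n |a_n|$, $\|g\|_r = \sup_m |b_m|$. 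Since $f, g \in \CC(\Zp^k,A)$ and $A$ is a topological ring, the pointwise product $fg$ is again continuous, so by Mahler's theorem it has a unique expansion $fg(z) = \sum_{l \in \NN^k} \alpha^{\cl{r \sum l}} d_l \binom{z}{l}$; the target is to show $d_l \to 0$ together with $\sup_l |d_l| \le \|f\|_r \|g\|_r$. The first of these gives $fg \in \Aar(\Zp^k,A)$ and the second gives $\|fg\|_r \le \|f\|_r \|g\|_r$, from which continuity of multiplication follows by bilinearity, using $fg - f_0 g_0 = (f-f_0) g + f_0 (g - g_0)$.

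The combinatorial input I would record first: the polynomials $\binom{z}{n}$ are integer-valued, hence $\binom{z}{n} \binom{z}{m} = \sum_l c^l_{n,m} \binom{z}{l}$ with $c^l_{n,m} \in \ZZ$, and $c^l_{n,m} = 0$ unless $\max(n_i,m_i) \le l_i \le n_i + m_i$ for every $i$ (in the $i$-th variable the product vanishes on $\{0,1,\dots,\max(n_i,m_i)-1\}$, so it has no $\binom{z_i}{l_i}$-term below that range). As $A$ is a $\Zp$-algebra carrying an ultrametric $\alpha$-Banach norm, $|c^l_{n,m}| \le 1$. Both Mahler series converge in the Banach space $(\CC(\Zp^k,A), \|\cdot\|_0)$ (their coefficients tend to $0$) and multiplication there is $\|\cdot\|_0$-contractive, so $fg$ is the unconditional sum $\sum_{n,m} \alpha^{\cl{r \sum n} + \cl{r \sum m}} a_n b_m \binom{z}{n}\binom{z}{m}$, which may be regrouped by the value of $l$; extracting the $\binom{z}{l}$-coefficient (a $\|\cdot\|_0$-continuous operation, via the Mahler inversion formula) then gives the \emph{finite} sum
\[ d_l = \sum_{n,m \,:\, c^l_{n,m} \neq 0} \alpha^{\,\cl{r \sum n} + \cl{r \sum m} - \cl{r \sum l}}\, a_n b_m\, c^l_{n,m}. \]

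The crux is the inequality $\cl{r \sum n} + \cl{r \sum m} \ge \cl{r(\sum n + \sum m)} \ge \cl{r \sum l}$, valid whenever $c^l_{n,m} \neq 0$: the first step because $\cl{x} + \cl{y}$ is an integer that is $\ge x + y$, the second because $\sum l \le \sum n + \sum m$ and $r > 0$. Thus every power of $\alpha$ occurring in $d_l$ has nonnegative exponent, hence norm $\le 1$, and the ultrametric inequality gives $|d_l| \le \sup_{n,m} |a_n| |b_m| \le \|f\|_r \|g\|_r$. For $d_l \to 0$: given $\varepsilon > 0$, pick $N$ with $|a_n| \le \varepsilon$ for $\sum n > N$ and $|b_m| \le \varepsilon$ for $\sum m > N$; if $\sum l > 2N$ then each contributing pair has $\sum n + \sum m \ge \sum l > 2N$, so one of $\sum n, \sum m$ exceeds $N$, forcing $|a_n b_m| \le \varepsilon \max(\|f\|_r,\|g\|_r)$ and hence $|d_l| \le \varepsilon \max(\|f\|_r,\|g\|_r)$.

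The step I expect to require the most care is the justification that the product of the two Mahler series regroups termwise into the single Mahler series with coefficients $d_l$ as above — this rests on unconditional convergence of both series in $(\CC(\Zp^k,A), \|\cdot\|_0)$ and on multiplication being $\|\cdot\|_0$-contractive there (so the doubly-indexed family of terms is summable), plus the support condition $\max(n_i,m_i) \le l_i \le n_i + m_i$ that makes the sum for $d_l$ finite and lets the ceiling inequality apply. I would also note that one might a priori fear a loss of a factor $|\alpha|^{-1}$ coming from the mismatch between $\cl{r \sum n} + \cl{r \sum m}$ and $\cl{r \sum l}$; the content of the computation is precisely that $\cl{x} + \cl{y} \ge \cl{x+y}$ is sharp enough to prevent this, so that $\|\cdot\|_r$ is in fact submultiplicative on $\Aar(\Zp^k,A)$.
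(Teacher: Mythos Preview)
Your proof is correct and follows the same approach as the paper: both rest on the isometry of Lemma \ref{lem:laconcrete} together with the fact that $\binom{z}{n}\binom{z}{m}$ expands in the Mahler basis with integer coefficients supported in degrees $\le n+m$. The paper records this in a single sentence; you have carefully filled in the details (the ceiling inequality $\cl{r\sum n}+\cl{r\sum m}\ge\cl{r\sum l}$, the resulting submultiplicativity of $\|\cdot\|_r$, and the regrouping justification), including the extra observation that the lower support bound $l_i\ge\max(n_i,m_i)$ makes each $d_l$ a finite sum.
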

\begin{proof}
This follows Lemma \ref{lem:laconcrete} and the fact that for $m,n \in \NN$, $\binom{z}{n} \binom{z}{m}$ is of the form $\sum_{i=0}^{m+n} a_i \binom{z}{i}$ with $a_i \in \ZZ$.
\end{proof}

\begin{lem} \label{lem:universal uniqueness}
Let
\[ f \colon \CC(\Zp^k,\Zp) \to \CC(\Zp^j,\Zp) \]
be a $\Zp$-module homomorphism.
For any $r,s \in \mathbb{R}^+$,
there is at most one continuous $A$-linear homomorphism $\tilde{f}$ making the diagram
\begin{center}
\begin{tikzcd}
\CC(\Zp^k,\Zp) \arrow[d,"f"] \arrow[r] & \CC(\Zp^k,A) \arrow[d] & \Aar(\Zp^k,A) \arrow[d,dashed,"\tilde{f}"] \arrow[l,hook'] \\
\CC(\Zp^j,\Zp) \arrow[r] & \CC(\Zp^j,A) & \Aar[s](\Zp^j,A) \arrow[l,hook']
\end{tikzcd}
\end{center}
commute, and there is at most one continuous $A$-linear homomorphism $\tilde{f}^*$ making the diagram
\begin{center}
\begin{tikzcd}
\Hom_{\Zp}(\CC(\Zp^j,\Zp),\Zp) \arrow[d,"f^*"] \arrow[r] & \LL_b(\CC(\Zp^j,A),A) \arrow[d] \arrow[r,hook] & \Dar[s](\Zp^j,A) \arrow[d,dashed,"\tilde{f}^*"] \\
\Hom_{\Zp}(\CC(\Zp^k,\Zp),\Zp) \arrow[r] & \LL_b(\CC(\Zp^k,A),A) \arrow[r,hook] & \Dar(\Zp^k,A)
\end{tikzcd}
\end{center}
commute.
\end{lem}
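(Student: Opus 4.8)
The plan is to read off uniqueness from the orthonormal bases produced in Lemma \ref{lem:laconcrete}. The key observation is that $\Aar(\Zp^k,A)$ and $\Dar[s](\Zp^j,A)$ each contain a dense $A$-submodule that is already visible at the level of $\Zp$-coefficients; the commutativity of the diagram then pins down $\tilde f$ (resp.\ $\tilde f^*$) on that submodule, and continuity together with the Hausdorffness of the targets finishes the job. Since no existence is claimed, this is all that is needed.

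For the first diagram, note that for each $n \in \NN^k$ the binomial polynomial $\binom{z}{n}$ lies in $\CC(\Zp^k,\Zp)$, and its image in $\CC(\Zp^k,A)$ lies in the subspace $\Aar(\Zp^k,A)$, being polynomial and hence locally analytic of every radius. By Lemma \ref{lem:laconcrete} the elements $\alpha^{\cl{r \sum n}}\binom{z}{n}$ form an orthonormal basis of $\Aar(\Zp^k,A)$, so the $A$-span of $\{\binom{z}{n} : n \in \NN^k\}$ is dense. Now suppose $\tilde f$ is continuous, $A$-linear, and makes the first diagram commute. Commutativity forces the image of $\tilde f(\binom{z}{n})$ in $\CC(\Zp^j,A)$ to coincide with the image of $f(\binom{z}{n}) \in \CC(\Zp^j,\Zp)$; since $\Aar[s](\Zp^j,A) \hookrightarrow \CC(\Zp^j,A)$ is injective, this determines $\tilde f(\binom{z}{n})$ uniquely. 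By $A$-linearity $\tilde f$ is then determined on the dense $A$-span of the $\binom{z}{n}$, and since $\Aar[s](\Zp^j,A)$ is Hausdorff and $\tilde f$ continuous, $\tilde f$ is unique.

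The argument for $\tilde f^*$ is dual. For $n \in \NN^j$ let $\psi_n \in \Hom_{\Zp}(\CC(\Zp^j,\Zp),\Zp)$ be the continuous functional extracting the $n$-th Mahler coefficient, so that $\psi_n(\binom{z}{m})$ equals $1$ if $m=n$ and $0$ otherwise (Lemma \ref{lem:mahler}). Under $\Ev \colon \Dar[s](\Zp^j,A) \isom c(\NN^j,A)$ from Lemma \ref{lem:laconcrete}, the image of $\psi_n$ in $\Dar[s](\Zp^j,A)$ is carried to the function supported at $n$ with value $\alpha^{\cl{s \sum n}}$; as $\alpha$ is a unit, the $A$-span of the images of the $\psi_n$ is exactly the dense submodule of finitely supported functions in $c(\NN^j,A)$. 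If $\tilde f^*$ is continuous, $A$-linear, and makes the second diagram commute, then commutativity forces $\tilde f^*$ to send the image of $\psi_n$ to the image in $\Dar(\Zp^k,A)$ of $f^*(\psi_n) = \psi_n \circ f$, a fixed element. By $A$-linearity and continuity, together with the Hausdorffness of $\Dar(\Zp^k,A)$, this determines $\tilde f^*$.

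I do not expect a genuine obstacle: the only real content is the two density assertions, and both are immediate from Lemma \ref{lem:laconcrete}, which identifies $\Aar(\Zp^k,A)$ and $\Dar(\Zp^k,A)$ with modules $c(\NN^k,A)$ via bases built from the $\binom{z}{n}$. The one point that requires a little care is simply to observe that one never needs to know that $f(\binom{z}{n})$ actually lands in $\Aar[s](\Zp^j,A)$, nor that $f^*(\psi_n)$ lands in $\Dar(\Zp^k,A)$; once a commuting $\tilde f$ (resp.\ $\tilde f^*$) is assumed to exist, its values on these elements are forced by injectivity of the horizontal inclusions, and that is all uniqueness requires.
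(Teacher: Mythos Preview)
Your argument is correct and follows essentially the same approach as the paper: both proofs use the orthonormal bases of Lemma~\ref{lem:laconcrete} to see that $\tilde f$ and $\tilde f^*$ are determined on a dense $A$-submodule, hence everywhere by continuity. The only difference is presentational: the paper records the explicit matrix coefficients $\tilde f_{nm} = \alpha^{\lceil r\sum n\rceil - \lceil s\sum m\rceil} f_{nm}$ (and $\tilde f^*_{nm} = \tilde f_{mn}$), which is convenient since these formulas are reused in the proof of the next lemma, whereas you phrase the same content as a density-plus-Hausdorff argument without writing the coefficients down.
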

If either homomorphism exists, we say that it is induced by $f$.
\begin{proof}
If the maps $\tilde{f}$ and $\tilde{f}^*$ exist, then their matrices
in the basis of Lemma \ref{lem:laconcrete} can be deduced from the matrix
of $f$ in the basis of Mahler's theorem.  More specifically,
if we write
\[ f\left( \binom{z}{n} \right) = \sum_{m \in \NN^j} f_{nm} \binom{z}{m} \]
with $f_{nm} \in \Zp$, then
the matrix coefficients of $\tilde{f}$ must be
\[ \tilde{f}_{nm} = \alpha^{\cl{r \sum n}-\cl{s \sum m}} f_{nm} \,, \]
and the matrix coefficients of $\tilde{f}^*$ must be
\[ \tilde{f}^*_{nm} = \tilde{f}_{mn} = \alpha^{\cl{r \sum m}-\cl{s \sum n}} f_{mn} \,. \]
\end{proof}
\begin{lem} \label{lem:universal}
There exists $t_0 \in \RR^+$
so that for any $r,s \in \RR^+$, $j,k \in \NN$, and any
$\Zp$-module homomorphism
$f \colon \CC(\Zp^k,\Zp) \to \CC(\Zp^j,\Zp)$
that induces a continuous homomorphism
\[ \AA^{(p,r)}(\Zp^k,\Qp) \to \AA^{(p,s)}(\Zp^j,\Qp) \,,  \]
$f$ also induces continuous homomorphisms
\[ \tilde{f} \colon \Aar[rt](\Zp^k,A) \to \Aar[st](\Zp^j,A) \]
\[ \tilde{f}^* \colon \Dar[st](\Zp^j,A) \to \Dar[rt](\Zp^k,A) \]
for all $t \in (0,t_0)$.
\end{lem}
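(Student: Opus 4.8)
The plan is to reduce everything to a single boundedness estimate on matrix coefficients, invoking Lemma \ref{lem:universal uniqueness} both over $A$ and over $\Qp$. First I fix a ring of definition $A_0$ of $A$ containing $\alpha$; since $\Zp$ is compact it has bounded image in $A$, so I may take $A_0 \supseteq \Zp$, and I equip $A$ with the associated $\alpha$-Banach norm $|\cdot|$ of Section \ref{sec:norm}, for which $|p| \le 1$. As $p^n \to 0$ in $A$, either $p = 0$ in $A$ or $|p| < 1$; writing $a = -\log|\alpha| > 0$ and $b = -\log|p| \in (0,+\infty]$, I set $t_0 = \min(1, b/a)$, which depends only on $A$ and $\alpha$. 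Expanding $f(\binom{z}{n}) = \sum_{m} f_{nm}\binom{z}{m}$ with $f_{nm} \in \Zp$, Lemma \ref{lem:universal uniqueness} shows that, in the orthonormal bases of Lemma \ref{lem:laconcrete}, the only possible matrix for an induced $\tilde f$ (with weights $rt, st$) is $\tilde f_{nm} = \alpha^{\cl{rt\sum n} - \cl{st\sum m}} f_{nm}$, and the only possible matrix for $\tilde f^*$ is its transpose. So, after transporting along the isometries $\Ser$ and $\Ev$ of Lemma \ref{lem:laconcrete}, it is enough to show that for $t \in (0,t_0)$ this matrix $(\tilde f_{nm})$ defines a continuous $A$-linear map between the appropriate copies of $c(\NN^{\bullet},A)$ — equivalently, that its entries are uniformly bounded in $A$ and that every column of $(\tilde f_{nm})$ and of its transpose lies in $c(\NN^{\bullet},A)$ — since $\tilde f^*$ will then be the continuous adjoint of $\tilde f$.

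To extract usable information from the hypothesis I apply Lemma \ref{lem:universal uniqueness} over $\Qp$ with the topologically nilpotent unit $p$ and weights $r, s$: the given continuous map $\AA^{(p,r)}(\Zp^k,\Qp) \to \AA^{(p,s)}(\Zp^j,\Qp)$ has matrix $(p^{\cl{r\sum n} - \cl{s\sum m}} f_{nm})$, and continuity of a linear map between the orthonormalizable $\Qp$-Banach spaces $c(\NN^k,\Qp)$, $c(\NN^j,\Qp)$ is equivalent to bounded matrix entries together with null columns. Hence there is $c_0 \ge 0$ with $v_p(f_{nm}) \ge \cl{s\sum m} - \cl{r\sum n} - c_0$ for all $n, m$, and for each fixed $n$ one has $v_p(f_{nm}) - \cl{s\sum m} \to +\infty$ as $\sum m \to \infty$; and trivially $v_p(f_{nm}) \ge 0$ because $f_{nm} \in \Zp$.

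The heart of the proof is then to estimate $|\tilde f_{nm}| = |\alpha|^{\cl{rt\sum n} - \cl{st\sum m}} |p|^{v_p(f_{nm})}$. Passing to $-\log$ and using $t\cl{x} - t \le \cl{tx} \le t\cl{x} + 1$ for $x \ge 0$, I split into the cases $\cl{s\sum m} - \cl{r\sum n} \ge c_0$ (applying the lower bound $v_p(f_{nm}) \ge \cl{s\sum m} - \cl{r\sum n} - c_0$) and $\cl{s\sum m} - \cl{r\sum n} < c_0$ (applying $v_p(f_{nm}) \ge 0$); in each case the dependence on $\cl{s\sum m}$ accumulates into a term with coefficient $at - b$, and the choice $t < t_0 \le b/a$ is exactly what makes that coefficient nonpositive, so that everything else is a constant depending only on $a, b, c_0, t$, yielding the uniform bound. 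For the columns of $(\tilde f_{nm})$: fixing $n$ and writing $v_p(f_{nm}) = \cl{s\sum m} + \varepsilon_m$ with $\varepsilon_m \to +\infty$, the same computation shows that $-\log|\tilde f_{nm}|$ differs from $b\varepsilon_m$ by a bounded amount, hence tends to $+\infty$. For the columns of the transpose: fixing $m$, $|\tilde f_{nm}| \le |\alpha|^{\cl{rt\sum n}} \, |\alpha|^{-\cl{st\sum m}} \to 0$ because $|\alpha| < 1$ and $\cl{rt\sum n} \to \infty$. When $p = 0$ in $A$ the inequality $v_p(f_{nm}) \ge \cl{s\sum m} - \cl{r\sum n} - c_0$ already forces $f_{nm}$ to vanish in $A$ outside the region where $\cl{s\sum m} - \cl{r\sum n}$ is bounded, so all the claims are immediate and the $\min$ with $1$ in the definition of $t_0$ is harmless. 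I expect this estimate, and pinning down the right $t_0$, to be the main obstacle: $p$-adic decay over $\Qp$ only converts into $|\cdot|$-decay over $A$ after being scaled by the factor $b/a = \log|p|/\log|\alpha|$, which can be arbitrarily small, so the weight must be shrunk by $t$; and $t > b/a$ genuinely breaks boundedness when $\cl{s\sum m} - \cl{r\sum n}$ grows, so $t_0$ is essentially optimal.

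Finally, transporting back along $\Ser$ and $\Ev$ produces continuous maps $\tilde f \colon \Aar[rt](\Zp^k,A) \to \Aar[st](\Zp^j,A)$ and $\tilde f^* \colon \Dar[st](\Zp^j,A) \to \Dar[rt](\Zp^k,A)$. The map $\tilde f$ agrees with the $A$-linear extension of $f$ on the $A$-span of the $\binom{z}{n}$, which is dense in $\Aar[rt](\Zp^k,A)$ by Lemma \ref{lem:laconcrete}, so the first diagram of Lemma \ref{lem:universal uniqueness} commutes; since $\tilde f^*$ is by construction the adjoint of $\tilde f$, the second diagram commutes as well. Hence $\tilde f$ and $\tilde f^*$ are the homomorphisms induced by $f$, as required.
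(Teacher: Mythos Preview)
Your proof is correct and follows essentially the same route as the paper: both arguments reduce to verifying that the matrix $(\alpha^{\lceil rt\sum n\rceil-\lceil st\sum m\rceil}f_{nm})$ has bounded entries, null columns, and null rows, and both obtain this by comparing $|\alpha|$ to $|p|$ and invoking the $\Qp$-boundedness coming from the hypothesis.  Your $t_0=\min(1,b/a)$ with $a=-\log|\alpha|$, $b=-\log|p|$ is the Banach-norm formulation of the paper's choice $t_0=\ell^{-1}$ for a positive integer $\ell$ with $p^{\ell}/\alpha$ power-bounded; the only imprecision is the phrase ``differs from $b\varepsilon_m$ by a bounded amount'' in the column estimate, where what you actually obtain (and need) is a one-sided inequality $-\log|\tilde f_{nm}|\ge b\varepsilon_m+(\text{bounded below})$.
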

\begin{proof}
If the map $\tilde{f} \colon \Aar[rt](\Zp^k, A) \to \Aar[st](\Zp^j,A)$ exists,
then in the notation of the previous lemma, its matrix coefficients must be
given by
\[ \tilde{f}_{nm} = \alpha^{\cl{rt \sum n}-\cl{st \sum m}} f_{nm} \,. \]
Conversely, if there is a continuous map with these matrix coefficients,
then it is the desired map $\tilde{f}$.

The $\tilde{f}_{nm}$ are the matrix coefficients of a continuous map
if and only if the following two conditions are satisfied.
\begin{enumerate}
\item $\tilde{f}_{nm}$ are bounded.
\item For any fixed $n$, $\tilde{f}_{nm} \to 0$ as $\sum m \to \infty$.
\end{enumerate}
The terms with $r\sum n-s \sum m \ge 0$ are certainly bounded, so we only need
to worry about terms with $r \sum n-s \sum m < 0$.  There exists a positive integer $\ell$ so that $p^{\ell}/\alpha$ is
power bounded.  If the $p^{\cl{rt\ell \sum n}-\cl{st\ell \sum n}} f_{nm}$
(considered as elements of $\Qp$) are bounded
(resp.~go to zero as $\sum m \to \infty$), then the same will be true of
the $\alpha^{\cl{rt\sum n}-\cl{st\sum m}} f_{nm}$ (considered as elements of $A$).
So we may take $t_0 = \ell^{-1}$.

Similarly, if the map
$\tilde{f}^* \colon \Dar[st](\Zp^j,A) \to \Dar[rt](\Zp^k,A)$
exists, then its matrix coefficients satisfy $\tilde{f}^*_{mn} = \tilde{f}_{nm}$.
The $\tilde{f}^*_{mn}$ are the coefficients of a continuous map
if and only if condition (1) above and the following condition are satisfied.
\begin{itemize}
\item[(2$'$)] For any fixed $m$, $\tilde{f}_{nm} \to 0$ as $\sum n \to \infty$.
\end{itemize}
Since $f_{nm} \in \Zp$ and $\alpha^{\cl{rt\sum n}} \to 0$ as $\sum n \to \infty$,
condition (2$'$) will always be satisfied.
\end{proof}

\begin{prop} \label{prop:comp}
Let $g \colon \Zp^j \to \Zp^k$ be a (globally) analytic function.
For some $r_0 \in \RR^{+}$ depending on $\alpha$ but not on $g$, $j$, $k$, composition
with $g$ induces continuous $A$-linear homomorphisms
\[ \Aar(\Zp^k,A) \to \Aar[s](\Zp^j,A) \]
\[ \Dar[s](\Zp^j,A) \to \Dar(\Zp^k,A) \]
for all $s<r<r_0$.
\end{prop}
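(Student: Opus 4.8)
The plan is to reduce Proposition \ref{prop:comp} to the universal statement of Lemma \ref{lem:universal}. The key observation is that composition with an analytic $g \colon \Zp^j \to \Zp^k$ is given on the level of continuous functions by a fixed $\Zp$-module homomorphism $f = (-) \circ g \colon \CC(\Zp^k,\Zp) \to \CC(\Zp^j,\Zp)$, and this homomorphism does not involve $A$ at all. So it suffices to show that this $f$ induces a continuous map $\AA^{(p,r)}(\Zp^k,\Qp) \to \AA^{(p,s)}(\Zp^j,\Qp)$ for all $s < r < r_0'$, where $r_0'$ is some absolute constant (depending on nothing); then Lemma \ref{lem:universal} upgrades this automatically to the statement over $A$, with $r_0 = t_0 r_0'$ where $t_0 = \ell^{-1}$ is the constant from that lemma (which depends only on $\alpha$ via the choice of $\ell$ with $p^{\ell}/\alpha$ power bounded). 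The dual statement for $\DD$ follows simultaneously, since Lemma \ref{lem:universal} produces $\tilde f^*$ alongside $\tilde f$.

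So the real content is the classical fact over $\Qp$: if $g$ is globally analytic on $\Zp^j$, then $h \mapsto h \circ g$ sends $LA$-functions with radius parameter $r$ to $LA$-functions with radius parameter $s$, for $s$ slightly smaller than $r$ and $r$ below some threshold, uniformly in $g$, $j$, $k$. First I would recall from Lemma \ref{lem:la} (Amice's theorem) the identification of $\AA^{(p,r)}(\Zp^k,\Qp)$ with the rigid functions on a polydisc of radius $p^{-r}$ (up to cofinality, replacing $\AA^{(p,1/(p-1)p^h)}$ by $LA_h$), so that the claim becomes: a global analytic map $g$ extends to a map of polydiscs $\Zp^j + p^{h'}\OCp^j \to \Zp^k + p^h\OCp^k$ for suitable $h' $ depending on $h$, and pullback of bounded rigid functions is bounded. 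Since $g$ is globally analytic, each of its $k$ coordinate functions $g_i$ has a convergent power series expansion on all of $\Zp^j$; the only thing one needs is an explicit bound on how much one must shrink the source disc to guarantee the image lands in the target disc, i.e. a bound of the form: if $g_i(z) \in \Zp$ for $z \in \Zp^j$ then $g_i(z + p^{h'}\OCp^j) \subseteq g_i(z) + p^h \OCp$ once $h' \ge h + c$ for an absolute constant $c$ (coming from the $v_p(n!)$ estimates already recorded in the excerpt, which control the denominators appearing when one differentiates a function with Mahler coefficients in $\Zp$). Translating the resulting relation between $h'$ and $h$ back through the cofinality $LA_h \leftrightarrow \AA^{(p,r)}$ with $r \sim 1/(p-1)p^h$ gives the multiplicative-type relation between $r$ and $s$ and pins down $r_0'$.

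I expect the main obstacle to be precisely this last bookkeeping step: making the dependence of $s$ on $r$ genuinely uniform in $g$, $j$, $k$, and extracting a clean absolute threshold $r_0'$. The subtlety is that "globally analytic" gives convergence of the power series of $g$ on $\Zp^j$ but a priori no uniform lower bound on its radius of overconvergence; one must use that the Mahler coefficients of $g$ (as a continuous $\Zp$-valued function) lie in $\Zp$, so that the standard estimate $v_p(n!) \ge n/(p-1) - \log_p(n+1)$ forces the Amice coefficients $a_n$ of $g$ to decay at the universal rate $p^{-n/(p-1)}$, and this universal rate is exactly what makes $g$ extend to a fixed-radius neighborhood regardless of $g$. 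Once that is in hand, everything else — continuity of pullback, compatibility with the inclusions $\CC(\Zp^k,\Zp) \hookrightarrow \CC(\Zp^k,A)$, and the passage to distributions — is formal, handled entirely by Lemmas \ref{lem:laconcrete}, \ref{lem:universal uniqueness}, and \ref{lem:universal}.
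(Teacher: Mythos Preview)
Your strategy is correct and coincides with the paper's: reduce to $A=\Qp$ via Lemma~\ref{lem:universal}, and verify the $\Qp$ case through Amice's description of locally analytic functions. The paper, however, executes the $\Qp$ step much more directly than your sketch suggests is necessary. Since $g$ is \emph{globally} analytic on $\Zp^j$ with values in $\Zp^k$, each component $g_i$ already extends to an analytic function on $\OCp^j$ with sup norm $\le 1$; in other words $g$ extends to a map $\OCp^j \to \OCp^k$. This immediately yields a continuous pullback $g^* \colon LA_0(\Zp^k,\Qp) \to LA_0(\Zp^j,\Qp)$, and then the sandwiching $\AA^{(p,1/(p-1))} \hookrightarrow LA_0 \hookrightarrow \AA^{(p,1/(p-1)-\epsilon)}$ recorded after Definition~\ref{def:lafunc} gives a continuous map
\[
\AA^{(p,1/(p-1))}(\Zp^k,\Qp) \to \AA^{(p,1/(p-1)-\epsilon)}(\Zp^j,\Qp)
\]
for every $\epsilon \in (0,1/(p-1))$. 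Lemma~\ref{lem:universal} (with $r=1/(p-1)$, $s=1/(p-1)-\epsilon$, and $t$ ranging over $(0,t_0)$) finishes the proof and gives both the function and distribution maps simultaneously, with $r_0 = t_0/(p-1)$.

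Your discussion of needing $h' \ge h + c$ for general $h$, and of extracting a universal decay rate for the ``Amice coefficients'' of $g$ from the fact that its Mahler coefficients lie in $\Zp$, is a detour and not quite right as stated: Mahler coefficients $b_n \in \Zp$ only give $|b_n/n!|_p \le p^{v_p(n!)}$, which grows rather than decays, so that estimate alone does not place $g$ in $LA_0$. The worry you raise about ``no uniform lower bound on the radius of overconvergence'' is a red herring: the hypothesis that $g$ is globally analytic is exactly the statement that $g$ lies in $LA_0$, and nothing beyond the closed unit polydisc is needed.
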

\begin{proof}
There are continuous maps
\[ \AA^{(p,1/(p-1))}(\Zp^k,\Qp) \to LA_0(\Zp^k,\Qp) \xrightarrow{g^*} LA_0(\Zp^j,\Qp) \to \AA^{(p,1/(p-1)-\epsilon)}(\Zp^j,\Qp) \]
for any $\epsilon \in (0,1/(p-1))$.
Applying Lemma \ref{lem:universal} then yields the desired result.
\end{proof}
If $j=1$, then the maps exist even if $r=s$.
We do not know if the same is true for $j>1$.
When $j=1$, one can prove existence by considering the norm on $LA_0$ defined in Lemma \ref{lem:la}
and using the fact that
$v_p(n!) - \sum_{i=1}^k v_p(m_i!) \ge \lfloor (n-\sum m)/p \rfloor$.
(The same idea will be used in the proof of Proposition \ref{prop:local}.)
However, for $j>1$,
$\sum_{i=1}^j v_p(n_i!) - \sum_{i=1}^k v_p(m_i!)$ can be zero for
arbitrarily large values of $\sum n - \sum m$.

\begin{prop} \label{prop:local}
Let $S$ be a set of coset representatives of $\Zp^k/p\Zp^k$.
The homeomorphism $\Zp^k \times S \isom \Zp^k$ defined by
$(z,s) \mapsto pz+s$
determines an isomorphism
\[ \CC(\Zp^k,A) \cong \CC(\Zp^k,A)^{\oplus p^k} \]
which induces isomorphisms
\[ \Aar(\Zp^k,A) \cong \Aar[pr](\Zp^k,A)^{\oplus p^k} \]
\[ \Dar(\Zp^k,A) \cong \Dar[pr](\Zp^k,A)^{\oplus p^k} \]
for all sufficiently small $r \in \RR^+$.
\end{prop}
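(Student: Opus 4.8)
The strategy is to reduce everything to the concrete orthonormal description provided by Lemma \ref{lem:laconcrete}, so that the claimed isomorphisms become statements about which matrices define continuous maps between modules of the form $c(\NN^k,A)$. First I would fix the decomposition $\CC(\Zp^k,A)\cong\CC(\Zp^k,A)^{\oplus p^k}$ coming from the homeomorphism $(z,s)\mapsto pz+s$, and observe that both of the maps we must analyze are adjoint to each other in the sense of Lemma \ref{lem:universal uniqueness} (the functions side and the distributions side). So by that lemma it suffices to exhibit continuous $A$-linear maps in both directions on the level of functions, since the distribution statement then follows formally by passing to $\LL_b(-,A)$ and using the density/closure definitions of $\Dar$. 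Concretely, the underlying $\Zp$-linear map $\CC(\Zp^k,\Zp)\to\CC(\Zp^k,\Zp)^{\oplus p^k}$ is $g\mapsto (z\mapsto g(pz+s))_{s\in S}$; I would compute its matrix in the Mahler basis, i.e.\ expand $\binom{pz+s}{n}$ in terms of $\binom{z}{m}$.

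The heart of the matter is the arithmetic of those matrix coefficients. Writing $\binom{pz+s}{n}=\sum_m c^{(s)}_{nm}\binom{z}{m}$ with $c^{(s)}_{nm}\in\Zp$, the point I would prove is a $p$-adic valuation bound of the shape $v_p\!\left(c^{(s)}_{nm}\right)\ge \lfloor(\sum n-\sum m)/(p-1)\rfloor$ or something comparable — morally, dividing the argument by $p$ shrinks supports by a factor of $p$, which is exactly the Amice/Mahler manifestation of "multiplication by $p$ is completely continuous." The cleanest route is probably to note that $\binom{pz+s}{n}$, as a polynomial in $z$, lies in $LA_0$ with a controlled norm: by Lemma \ref{lem:la}, the Mahler-type coefficients of a function in $LA_0(\Zp^k,\Qp)$ that extends analytically to $\Zp^k+p\OCp^k$ decay like $v_p(\lfloor n_1/p\rfloor!\cdots)$, and precomposition with $z\mapsto pz+s$ lands one in a smaller analyticity radius, gaining the needed $p$-powers. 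Then Lemma \ref{lem:universal} applies: the rescaled matrix coefficients $\alpha^{\lceil pr\sum n\rceil-\lceil r\sum m\rceil}c^{(s)}_{nm}$ (in the direction $\Aar\to\Aar[pr]^{\oplus p^k}$) are bounded and, for fixed $n$, tend to $0$ as $\sum m\to\infty$, once $r$ is small enough that $r\ell<t_0$ for the integer $\ell$ with $p^\ell/\alpha$ power bounded. The inverse map, reconstructing $g$ from its values on the $p^k$ cosets, has the same flavor — one inverts the change of variables $z\mapsto(z-s)/p$ on each piece, which only improves analyticity radii — so the rescaled matrix again defines a continuous map, and the two compositions are the identity because they already are on $\CC(\Zp^k,A)$, which contains the $\Aar$'s densely.

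The main obstacle I anticipate is making the valuation estimate for $c^{(s)}_{nm}$ both clean and sharp enough to feed into Lemma \ref{lem:universal} uniformly in $k$ and in $s\in S$ — in particular getting a bound that is linear in $\sum n-\sum m$ with the right slope, rather than something weaker that would force $r$ to depend on $k$. I would handle this exactly as the remark after Proposition \ref{prop:comp} suggests, via the inequality on $v_p(n!)-\sum_i v_p(m_i!)$ together with the explicit $LA_0$-norm of $z\mapsto g(pz+s)$; the factor-of-$p$ contraction in the radius of analyticity is what supplies the extra valuation, and it is independent of $k$ and of the coset representative. Once that estimate is in hand, everything else is bookkeeping: invoke Lemma \ref{lem:universal uniqueness} for uniqueness, Lemma \ref{lem:universal} for existence on both the $\AA$ and $\DD$ sides, and density of $\CC$-level maps to identify the two composites with the identity.
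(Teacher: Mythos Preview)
Your strategy is the paper's: compute matrix coefficients in the Mahler basis, bound their $p$-adic valuations via Amice's theorem (Lemma \ref{lem:la}), and invoke Lemma \ref{lem:universal}. The paper executes this by first reducing to $k=1$ via translation invariance and the product structure of $z\mapsto pz+s$ (choosing $S$ as a product of one-dimensional coset representatives), which sidesteps exactly the multivariable difficulty you flag; you should either do the same or explicitly use the factorization $c^{(s)}_{nm}=\prod_i c^{(s_i)}_{n_i m_i}$.

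Several details in your plan need correcting. The suggested bound $v_p(c^{(s)}_{nm})\ge\lfloor(\sum n-\sum m)/(p-1)\rfloor$ is false already for $k=1$, $p=2$, $s=0$, $n=2$, $m=1$: since $\binom{2z}{2}=4\binom{z}{2}+\binom{z}{1}$ one has $c^{(0)}_{2,1}=1$. The combination that must appear is $p\sum m-\sum n$, and the paper obtains $v_p(g_{nm})\ge\lfloor m/p-n/p^2\rfloor$ from Lemma \ref{lem:la}. You have also transposed $r$ and $pr$ in the rescaling: for a map $\Aar\to\Aar[pr]$, Lemma \ref{lem:universal uniqueness} gives matrix entries $\alpha^{\lceil r\sum n\rceil-\lceil pr\sum m\rceil}c^{(s)}_{nm}$, not the other way around. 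Most importantly, the inverse direction is not easier, and ``only improves analyticity radii'' is misleading: the function on $\Zp$ equal to $\binom{(y-s)/p}{m}$ on $s+p\Zp$ and $0$ elsewhere is not polynomial and has infinitely many nonzero Mahler coefficients, so a separate decay estimate is required. The paper treats this symmetrically via the map $h(z)=z/p$ on $p\Zp$, $0$ on $\Zp^\times$, and proves $v_p(h_{nm})\ge\lfloor m/p^2-n/p\rfloor$; you should plan to do the same rather than declaring the inverse automatic.
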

\begin{proof}
First, consider the case $k=1$.
Applying Lemma \ref{lem:universal} along with translation invariance, we see
that
it is then enough to check that composition with the function
\[ g(z) = pz \]
defines a continuous homomorphism
\[ \AA^{(p,1/2p^2)}(\Zp,\Qp) \to \AA^{(p,1/2p)}(\Zp,\Qp) \]
and that composition with the function
\[ h(z) = \begin{cases}
z/p, & z \in p\Zp \\
0, & z \in \Zp^{\times}
\end{cases} \]
defines a continuous homomorphism
\[ \AA^{(p,1/2p)}(\Zp,\Qp) \to \AA^{(p,1/2p^2)}(\Zp,\Qp) \,. \]
Define $g_{nm}, h_{nm}$ by
\[ g \left( \binom{z}{n} \right) = \sum_{m=0}^{\infty} g_{nm} \binom{z}{m}, \quad
h \left( \binom{z}{n} \right) = \sum_{m=0}^{\infty} h_{nm} \binom{z}{m}\,. \]
By the same reasoning as in Lemma \ref{lem:universal},
we just need to verify that:
\begin{enumerate}
\item $v_p(g_{nm})-\frac{m}{2p}+\frac{n}{2p^2}$ is bounded below for $pm \ge n$.
\item For any $n$,  $v_p(g_{nm})-\frac{m}{2p}+\frac{n}{2p^2} \to \infty$ as $m \to \infty$.
\item $v_p(h_{nm})-\frac{m}{2p^2}+\frac{n}{2p}$ is bounded below for $m \ge pn$.
\item For any $n$,  $v_p(h_{nm})-\frac{m}{2p^2}+\frac{n}{2p} \to \infty$ as $m \to \infty$.
\end{enumerate}
Applying Lemma \ref{lem:la} gives
\[ v_p(g_{nm}) - v_p(m!) \ge -v_p(\fl{n/p}!) \,. \]
For $n \ge pm$ this implies
\[ v_p(g_{nm}) \ge \sum_{i=1}^{\infty} \left( \fl{m/p^i} - \fl{n/p^{i+1}} \right) \ge \fl{m/p-n/p^2} \,. \]
Similarly, Lemma \ref{lem:la} implies
\[ v_p(h_{nm}) \ge \sum_{i=1}^{\infty} \left( \fl{m/p^{i+1}}-\fl{n/p^i} \right) \ge \fl{m/p^2-n/p} \,. \]
This proves the case $k=1$.

We reduce the general case to the case $k=1$ as follows.
Since the above modules are all preserved by translation,
if the proposition is true for one choice of $S$, it is true
for any choice of $S$.  In particular, we may assume
$S$ is a product of $k$ sets of coset representatives of $\ZZ/p\ZZ$.
Then, since multiplication by $p$ does not
mix coordinates, the argument is essentially the same as in the $k=1$ case.
\end{proof}

\begin{lem} \label{lem:charan}
Any continuous homomorphism $\lambda \colon \Zp^k \to A^{\times}$ is in $\AA(\Zp^k,A)$.
\end{lem}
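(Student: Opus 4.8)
The plan is to reduce to one variable using that $\AA(\Zp^k,A)$ is a ring, and then to treat the one-variable case by extracting from the continuity of $\lambda$ a power $p^m$ for which $\lambda(1)^{p^m}-1$ is divisible by $\alpha$, and descending through the cosets of $p^m\Zp$ by means of Proposition \ref{prop:local}.

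For the reduction I would write $\lambda_i\colon\Zp\to A^\times$ for the restriction of $\lambda$ to the $i$-th coordinate axis, so that $\lambda(z_1,\dots,z_k)=\prod_{i=1}^k\lambda_i(z_i)$. Each coordinate projection $\pi_i\colon\Zp^k\to\Zp$ is a globally analytic map, so by Proposition \ref{prop:comp} composition with $\pi_i$ carries $\AA(\Zp,A)$ into $\AA(\Zp^k,A)$; granting the one-variable case, $\lambda=\prod_i(\lambda_i\circ\pi_i)$ is then a finite product of elements of $\AA(\Zp^k,A)$, which multiply back into $\AA(\Zp^k,A)$ by Lemma \ref{lem:mult}. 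This leaves the case $k=1$.

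For $k=1$, set $u=\lambda(1)\in A^\times$ and fix a ring of definition $A_0$ containing $\alpha$. Since $\alpha A_0$ is an open neighborhood of $0$ and $\lambda$ is continuous, there is an $m\in\NN$ with $\lambda(p^m\Zp)\subseteq 1+\alpha A_0$; in particular $w\colonequals u^{p^m}-1=\alpha w_0$ for some $w_0\in A_0$. I would then study the continuous function $\mu\colon\Zp\to A$, $\mu(y)=\lambda(p^m y)$. On $\NN$ it equals $(1+w)^y=\sum_n\binom{y}{n}w^n$, and since $w^n=\alpha^n w_0^n\to 0$ this Mahler series already defines a continuous function (Lemma \ref{lem:mahler}), so $\mu(y)=\sum_n w^n\binom{y}{n}$ for all $y\in\Zp$. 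For any $r\in(0,1)$ we have $\alpha^{-\cl{rn}}w^n=\alpha^{\,n-\cl{rn}}w_0^n\to 0$, because $n-\cl{rn}\to\infty$, $\alpha$ is topologically nilpotent and $\{w_0^n\}\subseteq A_0$ is bounded; hence $\mu\in\Aar(\Zp,A)\subseteq\AA(\Zp,A)$ by Lemma \ref{lem:laconcrete}.

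Finally I would iterate Proposition \ref{prop:local} $m$ times: for $r$ small enough, the map $(z,a)\mapsto p^m z+a$ with $a$ ranging over $\{0,\dots,p^m-1\}$ identifies $\Aar(\Zp,A)$ with $\Aar[p^m r](\Zp,A)^{\oplus p^m}$, so a continuous function $f$ lies in $\AA(\Zp,A)$ as soon as $z\mapsto f(p^m z+a)$ does for every $a$. For $f=\lambda$ we have $\lambda(p^m z+a)=\lambda(a)\mu(z)=u^a\mu(z)$, the constant $u^a\in A$ times $\mu\in\AA(\Zp,A)$; since $\AA(\Zp,A)$ is an $A$-module, this lies in $\AA(\Zp,A)$, and therefore so does $\lambda$. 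The point requiring the most care is isolating $m$ with $u^{p^m}-1\in\alpha A_0$: one cannot in general arrange that $u-1$ itself is topologically nilpotent, which is exactly why Proposition \ref{prop:local} is invoked to transfer local analyticity from the subgroup $p^m\Zp$ back to all of $\Zp$ rather than estimating the Mahler coefficients of $u^z$ directly.
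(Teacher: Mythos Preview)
Your proof is correct and follows essentially the same approach as the paper's: reduce to $k=1$ via the ring structure (Lemma~\ref{lem:mult}), pass to an open sub-lattice $p^m\Zp$ via Proposition~\ref{prop:local}, and then read off local analyticity from the Mahler expansion $\sum_n \binom{z}{n}(\lambda(1)-1)^n$. The only differences are cosmetic: you make the reduction to one variable explicit via Proposition~\ref{prop:comp}, and you arrange $\lambda(1)-1\in\alpha A_0$ (yielding $\mu\in\Aar$ for $r<1$) whereas the paper arranges the slightly stronger condition that $(\lambda(1)-1)/\alpha$ is topologically nilpotent (yielding $\mu\in\Aar[1]$); either suffices.
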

\begin{proof}
Lemma \ref{lem:mult} allows us to reduce to the one-dimensional case,
and Proposition \ref{prop:local} allows us to replace $\Zp^k$ with an open sub-lattice.
So it suffices to consider the case where $k=1$ and $(\lambda(1)-1)/\alpha$ is
topologically nilpotent.  In that case, since
\[ \lambda(z) = \sum_{n=0}^{\infty} \binom{z}{n} (\lambda(1)-1)^n, \]
$\lambda \in \Aar[1](\Zp,A)$.
\end{proof}

\begin{lem} \label{lem:inccc}
For any $0<s<r$, the inclusions
$\Aar[r](\Zp^k,A) \hookrightarrow \Aar[s](\Zp^k,A)$ and $\Dar[s] \hookrightarrow \Dar[r](\Zp^k,A)$ are
completely continuous.
\end{lem}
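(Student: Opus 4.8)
The plan is to work with the orthonormal bases provided by Lemma \ref{lem:laconcrete} and reduce the statement to the observation that multiplication by $\alpha^{\lceil (r-s) \sum n \rceil}$ becomes ``small'' as $\sum n \to \infty$. Concretely, under the isomorphisms $\Ser \colon c(\NN^k,A) \isom \Aar[s](\Zp^k,A)$ and $\Ser \colon c(\NN^k,A) \isom \Aar[r](\Zp^k,A)$, the inclusion $\iota \colon \Aar[r](\Zp^k,A) \hookrightarrow \Aar[s](\Zp^k,A)$ corresponds to the diagonal operator on $c(\NN^k,A)$ with matrix coefficients $\iota_{nn} = \alpha^{\lceil r \sum n \rceil - \lceil s \sum n \rceil}$ (and zero off the diagonal), by the same matrix-coefficient computation as in Lemma \ref{lem:universal uniqueness}. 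Since $\alpha$ is a topologically nilpotent unit and $\lceil r \sum n \rceil - \lceil s \sum n \rceil \to \infty$ as $\sum n \to \infty$ (because $r>s>0$), for any open neighborhood $U$ of zero in $A$ there are only finitely many $n \in \NN^k$ with $\alpha^{\lceil r \sum n \rceil - \lceil s \sum n \rceil} \notin U$.

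The key step is then to approximate $\iota$ by finite-rank operators. For each $N$, let $P_N$ be the operator on $c(\NN^k,A)$ that truncates a function to its values on $\{ n : \sum n \le N \}$; this has finite-rank image (a free $A$-module of finite rank), so $P_N \circ \iota$ is a finite-rank element of $\LL_b(\Aar[r](\Zp^k,A), \Aar[s](\Zp^k,A))$. I claim $P_N \circ \iota \to \iota$ in $\LL_b$, i.e.\ uniformly on bounded subsets of $\Aar[r](\Zp^k,A)$. Indeed, a bounded subset $B$ of $c(\NN^k,A)$ (supremum norm) lies in $\{ f : \|f\|_{\sup} \le C \}$ for some constant, and $(\iota - P_N\iota)(f)$ is supported on $\{\sum n > N\}$ with values $\alpha^{\lceil r\sum n\rceil - \lceil s\sum n\rceil} f(n)$; as $N \to \infty$ the scalars $\alpha^{\lceil r\sum n\rceil - \lceil s\sum n\rceil}$ for $\sum n > N$ all lie in any prescribed neighborhood of zero, so $(\iota - P_N\iota)(B)$ is eventually contained in any prescribed neighborhood of zero in $c(\NN^k,A) = \Aar[s](\Zp^k,A)$. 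Hence $\iota$ lies in the closure of the finite-rank operators, which is the definition of completely continuous.

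For the distribution statement, I would argue dually in the same way: by Lemma \ref{lem:laconcrete} the inclusion $\Dar[s](\Zp^k,A) \hookrightarrow \Dar[r](\Zp^k,A)$ corresponds, under $\Ev$, to the diagonal operator on $c(\NN^k,A)$ with the \emph{same} coefficients $\alpha^{\lceil r \sum n\rceil - \lceil s \sum n\rceil}$ (note that for distributions the roles of $r$ and $s$ in the exponents are swapped relative to functions, but since the inclusion also goes the other way, the net exponent is again $\lceil r\sum n\rceil - \lceil s\sum n\rceil$, which is nonnegative and tends to $\infty$), so the identical truncation argument applies. The only mild subtlety to be careful about is bookkeeping with the ceiling functions and making sure the exponent genuinely tends to $+\infty$ rather than merely being nonnegative — but since $\lceil r\sum n\rceil - \lceil s\sum n\rceil \ge (r-s)\sum n - 1$, this is immediate. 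I expect no serious obstacle here; the content is entirely the diagonalization afforded by Lemma \ref{lem:laconcrete} together with topological nilpotence of $\alpha$.
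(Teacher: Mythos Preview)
Your proposal is correct and follows essentially the same approach as the paper: use the orthonormal bases of Lemma~\ref{lem:laconcrete} to see that both inclusions are diagonal with entries $\alpha^{\lceil r\sum n\rceil - \lceil s\sum n\rceil}$, which tend to zero as $\sum n \to \infty$. The paper simply notes that the diagonal entries go to zero (tacitly invoking the standard fact that such a diagonal operator is a limit of its finite truncations), whereas you spell out the truncation argument explicitly; your ceiling exponents are in fact the ones consistent with the formulas in Lemma~\ref{lem:laconcrete}.
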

\begin{proof}
In the orthonormal bases of Lemma \ref{lem:laconcrete},
these inclusions are represented by diagonal matrices with
diagonal entries of the form $\alpha^{\fl{r \sum n}-\fl{s \sum n}}$.
As $\sum n \to \infty$, the entries go to zero.
\end{proof}

\subsection{Gluing}

Propositions \ref{prop:comp} and \ref{prop:local} show that it makes sense to define
locally analytic functions and distributions on
arbitrary locally $\Qp$-analytic manifolds by gluing.
\begin{defn}
Let $k$ be a nonnegative integer, and let $X$ be a locally $\Qp$-analytic manifold of dimension
$k$.  Choose a decomposition $X = \bigsqcup_{i \in I} X_i$
for some index set $I$, and choose an identification of each $X_i$ with $\Zp^k$.
We define
\begin{eqnarray*}
\AA(X,A) & = & \prod_{i \in I} \AA(X_i,A) \\
\AA_c(X,A) & = & \bigoplus_{i \in I} \AA(X_i,A) \\
\DD(X,A) & = & \bigoplus_{i \in I} \DD(X_i,A) \\
\DD_c(X,A) & = & \prod_{i \in I} \DD(X_i,A) \,.
\end{eqnarray*}
\end{defn}
By Propositions \ref{prop:comp} and \ref{prop:local}, the above definitions
do not depend on the choice of decomposition.

\begin{thm} \label{thm:la}
The modules $\AA(X,A)$, $\DD(X,A)$, $\AA_c(X,A)$, and $\DD_c(X,A)$ satisfy the following
properties:
\begin{enumerate}
\item $\AA(X,A)$ is ring.
\item If $g \colon X \to Y$ is a locally analytic map, then composition with
$g$ induces homomorphisms $\AA(Y,A) \to \AA(X,A)$ and
$\DD(X,A) \to \DD(Y,A)$.
\item The functors $U \mapsto \AA(U,A)$ and $U \mapsto \DD_c(U,A)$ are sheaves on $X$.
\item If $X$ has the structure of a finitely generated $\Zp$-module, then any continuous
group homomorphism $X \to A^{\times}$ is in $\AA(X,A)$.
\end{enumerate}
\end{thm}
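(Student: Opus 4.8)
The plan is to reduce every assertion to the case of a single coordinate patch $\Zp^k$ and then invoke the results already established there, using throughout the observation (recorded just after the gluing definition) that $\AA(X,A)$, $\DD(X,A)$, $\AA_c(X,A)$, $\DD_c(X,A)$ do not depend on the chosen decomposition $X=\bigsqcup_i X_i$. For (1), since multiplication is defined coordinatewise on $\AA(X,A)=\prod_i\AA(X_i,A)$, it suffices to make each $\AA(\Zp^k,A)=\varinjlim_r\Aar(\Zp^k,A)$ a ring; Lemma \ref{lem:mult} supplies continuous multiplication maps $\Aar(\Zp^k,A)\times\Aar(\Zp^k,A)\to\Aar(\Zp^k,A)$ compatible with the transition maps of the directed system, hence a multiplication on the colimit, and associativity, commutativity, distributivity and the existence of a unit are inherited from pointwise multiplication of $A$-valued functions.

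For (2), given a locally analytic $g\colon X\to Y$, I would fix a decomposition $Y=\bigsqcup_j Y_j$ (each $Y_j$ being clopen in $Y$), so that $\{g^{-1}(Y_j)\}_j$ is an open partition of $X$, and then choose a decomposition $X=\bigsqcup_i X_i$ refining it; refining still further — each coordinate ball $\Zp^{\dim X}$ is a finite disjoint union of clopen sub-balls $p^N\Zp^{\dim X}+s$, each again $\cong\Zp^{\dim X}$ — one can arrange that each $g|_{X_i}$ is a globally analytic map $X_i\cong\Zp^{\dim X}\to Y_{j(i)}\cong\Zp^{\dim Y}$, since a locally analytic map is given by a convergent power series on a sufficiently small coordinate ball. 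Proposition \ref{prop:comp}, after passing to the colimit (resp.\ limit) over $r$, then yields continuous maps $\AA(Y_{j(i)},A)\to\AA(X_i,A)$ and $\DD(X_i,A)\to\DD(Y_{j(i)},A)$; precomposing the former with the projection $\prod_j\AA(Y_j,A)\to\AA(Y_{j(i)},A)$ and postcomposing the latter with the inclusion $\DD(Y_{j(i)},A)\hookrightarrow\bigoplus_j\DD(Y_j,A)$, and then taking the product over $i$ (resp.\ the sum over $i$, which makes sense because any element of $\bigoplus_i\DD(X_i,A)$ has finite support), produces $g^*\colon\AA(Y,A)\to\AA(X,A)$ and $g_*\colon\DD(X,A)\to\DD(Y,A)$. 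These are the evident composition and push-forward operations, so by the independence of the decomposition they do not depend on the choices made and are functorial, and $g^*$ is a ring homomorphism by construction.

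For (3), given an open $U\subseteq X$ and an open cover $U=\bigcup_k V_k$, I would first take any decomposition of $U$ into compact open coordinate balls and then, inside each such ball (which is covered by finitely many $V_k$), pass to a fine enough partition into clopen sub-balls so that each sub-ball lies in a single $V_k$; this produces a decomposition $U=\bigsqcup_i U_i$ into coordinate balls refining $\{V_k\}$, say with $U_i\subseteq V_{k(i)}$. Then $\AA(U,A)=\prod_i\AA(U_i,A)$ with every restriction map a projection, and the sheaf axioms become formal: a family $(s_k)_k$ with $s_k\in\AA(V_k,A)$ agreeing on overlaps defines $(s_{k(i)}|_{U_i})_i\in\prod_i\AA(U_i,A)=\AA(U,A)$ — well defined, since $U_i\subseteq V_k\cap V_{k'}$ forces $s_k|_{U_i}=s_{k'}|_{U_i}$ — and this is inverse to the restriction map. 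The identical argument with $\DD_c(U,A)=\prod_i\DD(U_i,A)$ shows that $U\mapsto\DD_c(U,A)$ is a sheaf. For (4), I would write $X\cong\Zp^r\oplus T$ with $T$ finite and use the decomposition $X=\bigsqcup_{t\in T}(t+\Zp^r)$ into coordinate balls of dimension $r=\dim X$: on the coset $t+\Zp^r\cong\Zp^r$ a continuous homomorphism $\lambda\colon X\to A^\times$ restricts to $v\mapsto\lambda(t)\lambda(v)$, which lies in $\AA(\Zp^r,A)$ because $\lambda|_{\Zp^r}\in\AA(\Zp^r,A)$ by Lemma \ref{lem:charan}, $\lambda(t)\in A^\times$ is a constant function, and $\AA(\Zp^r,A)$ is a ring by (1); hence $\lambda\in\prod_{t\in T}\AA(\Zp^r,A)=\AA(X,A)$.

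The step I expect to require the most care is the bookkeeping in (2): choosing the decompositions of $X$ and $Y$ compatibly with $g$ and finely enough for Proposition \ref{prop:comp} to apply, and then verifying that the resulting $g^*$ and $g_*$ really are independent of these choices and compatible with composition of maps. Everything else should reduce formally to Lemmas \ref{lem:mult} and \ref{lem:charan} and Proposition \ref{prop:comp} together with the already-noted independence of the decomposition.
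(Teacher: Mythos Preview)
Your proposal is correct and is essentially the same as the paper's argument: the paper's proof is the single line ``The claims all follow immediately from the results of section \ref{sec:laproperties},'' and what you have written is a careful unpacking of that sentence, reducing each item to Lemma~\ref{lem:mult}, Proposition~\ref{prop:comp}, Proposition~\ref{prop:local}, and Lemma~\ref{lem:charan} via the independence of the chosen decomposition. The bookkeeping you flag in (2) is exactly the content of the remark that Propositions~\ref{prop:comp} and~\ref{prop:local} make the gluing definition well-defined, so no new ideas are needed there.
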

\begin{proof}
The claims all follow immediately from the results of section \ref{sec:laproperties}.
\end{proof}

\subsection{Geometric interpretation of distributions}

The modules of locally analytic distributions have an alternative
interpretation as rings of sections of adic spaces.  This interpretation will not
be used elsewhere in the paper, but it gives further evidence that our definition of distributions
is reasonable.  For background on adic spaces, see \cite{Hub93,Hub94,Hub96} or \cite[\S 2-5]{berkeley}.

Let $D = \Spa(\Zp \db{\Zp^k},\Zp \db{\Zp^k})$.
Suppose that the Tate algebra
$A \ang{T_1,\dotsc,T_n}$ is sheafy for each nonnegative integer $n$.
Let $A^+$ be an open and
integrally closed subring of $A$.
Let $Y = D \times_{\Spa(\Zp,\Zp)} \Spa(A,A^+)$.
We can construct $Y$ as follows.
There is an isomorphism $\Zp \db{T_1,\dotsc,T_k} \cong \Zp \db{\Zp^k}$
that sends $T_i \mapsto \Delta_{e_i}$, where the $e_i$ form a basis of $\Zp^k$;
this isomorphism is known as the multivariable Amice transform.
For any positive rational $r=m/n$,
let $B_r=A\ang{T_1,\dotsc,T_k,T_1^n/\alpha^m,\dotsc,T_k^n/\alpha^m}$,
and let $B_r^+$ be the normal closure of
$A^+\ang{T_1,\dotsc,T_n,T_1^n/\alpha^m,\dotsc,T_n^n/\alpha^m}$ in $B_r$.
Then $Y$ is formed by gluing the affinoids $Y_r \colonequals \Spa(B_r,B_r^+)$.

There are canonical isomorphisms
\begin{IEEEeqnarray*}{rCl}
\Hom_{\Zp}(\CC(\Zp^k,\Zp),\Zp) & \cong & \OO_D(D) \\
\DD(\Zp^k,A) & \cong & \OO_Y(Y) \\
\Dar(\Zp^k,A) & \cong & \OO_Y(Y_r) \quad \forall r \in \QQ^+ \,.
\end{IEEEeqnarray*}

\section{Overconvergent cohomology} \label{sec:automorphic}

Now we use the modules constructed in section \ref{sec:la}
to define overconvergent cohomology.
We mostly repeat the setup of \cite[\S 3-4]{Urb11}; see also
\cite{AS08} and \cite[\S 2-3]{Han15}.

\subsection{Locally symmetric spaces} \label{sec:lss}

Let $\AD$ (resp.~$\AD_f$, $\AD_f^p$) be the ring of adeles
(resp.~finite adeles, finite adeles away from $p$) of $\QQ$.

Let $G$ be a connected reductive algebraic group over $\QQ$.
We will assume that $G(\Qp)$ is quasisplit.  Let $B,T,N,N^-$ be compatible
choices of a Borel subgroup, maximal torus, maximal unipotent
subgroup, and opposite unipotent subgroup, respectively, of $G(\Qp)$.

We will need some results from \cite{bruhat-tits-1}.  Note that
$G_{\Qp}$ admits a valued root datum (``donn\'ee radicielle valu\'ee'')
by \cite[4.2.3 Th\'eor\`eme]{bruhat-tits-2}.

Let $I$ be an Iwahori subgroup of $G(\Qp)$ compatible with $B$ (see for
example \cite[\S 6.5]{bruhat-tits-1}; note that this reference denotes
the Iwahori by $B$).
Then $I$ admits a factorization $I=N_0 T_0 N_0^{-}$, where
$N_0^{-} = N^- \cap I$, $T_0 = T \cap I$, $N_0 = N \cap I$.
Let $K^p$ be an open compact subgroup
of $\AD_f^p$, and let $K = K^p I$.  We assume that $K$ is neat; see
Definition \ref{neat} below.
Let $G_{\infty}^+$ be the identity component of $G(\RR)$, and
let $K_{\infty}$ be a maximal compact modulo center subgroup of $G_{\infty}^+$.
Let $Z_G$ be the center of $G$.

The space
\[\XX \colonequals G(\AD)/K^p G_{\infty}^+ \]
may be considered as a locally
$\Qp$-analytic manifold.
Let $A$ be a complete Noetherian Tate $\Zp$-algebra.
In section \ref{sec:la}, we defined
the module $\DD_c(\mathcal{X},A)$ of ``locally analytic'' compactly
supported $A$-valued distributions on $\XX$.

Let
$\lambda \colon T_0 \to A^{\times}$ be a continuous homomorphism.
By Lemma \ref{lem:charan}, $\lambda \in \AA(T_0, A)$.
We will assume that $\ker \lambda$ contains $(Z_G(\QQ) K^p G_{\infty}^+ \cap T_0)$.
We define
$\DD_{c,\lambda}(\XX,A)$ to be the quotient of
$\DD_c(\mathcal{X},A)$ obtained by constraining
right translation by $N_0^-$ to act by the identity,
right translation by $T_0$ to act by $\lambda$,
and translation by $Z_G(\QQ)$ to act by the identity.

The group $\Gadq$ acts on $\DD_{c,\lambda}(\XX,A)$ by left
translation.  Moreover, $\DD_{c,\lambda}(\XX,A)$ is
a direct sum of modules induced from much smaller
subgroups of $\Gadq$.
We can write $G(\AD)$ as a finite union
\[ G(\AD) = \bigsqcup_{i} G(\QQ) g_i G_{\infty}^+ K \,. \]
Let $\Gamma_i$ be the image of $g_i G_{\infty}^+ K g_i^{-1} \cap G(\QQ)$ in $\Gadq$.
Then
\[ \DD_{c,\lambda}(\XX,A) \cong \bigoplus_i \Ind_{\Gamma_i}^{\Gadq} \DD_{\lambda}(g_i I,A) \]
where $\DD_{\lambda}(g_i I,A)$ is the quotient of $\DD(g_i I,A)$ obtained
by constraining right translation by $N_0^-$ to act as the identity
and right translation by $T_0$ to act as $\lambda$.  Here $\Gamma_i$ acts
on $\DD_{\lambda}(g_i I,A)$ by left translation.

The existence of the Iwahori factorization implies that the
map $N_0 \to g_i I$ given by $n \mapsto g_i n$ induces an isomorphism
of $A$-modules
\[ \DD(N_0,A) \isom \DD_{\lambda}(g_i I,A) \,. \]
This identification induces a $\Gamma_i$-action on $\DD(N_0,A)$, which
can be described as follows.
Any $x \in I$ has an Iwahori factorization $x = \bn(x) \bt(x) \bn^-(x)$
with $\bn(x) \in N_0$, $\bt(x) \in T_0$, $\bn^-(x) \in N_0^-$, and the functions
$\bn$, $\bt$, and $\bn^-$ are analytic.
The action of $\Gamma_i$ on $\DD(N_0,A)$ is given by
\[ \gamma \cdot [x] = \lambda(\bt(g_i^{-1} \gamma g_i x)) [\bn(g_i^{-1} \gamma g_i x)] \]
for $\gamma \in \Gamma_i$, $x \in N_0$.  Here $[x]$ denotes the
Dirac delta distribution supported at $x$.

Now consider the locally symmetric space
\[ S_G(K) \colonequals G(\QQ) \backslash G(\AD) / K_{\infty} K \,. \]
Then $S_G(K) \cong \bigsqcup_i \mathcal{Y}_i$ where
\[ \mathcal{Y}_i \colonequals \Gamma_i \backslash G^+_{\infty}/K_{\infty} \,. \]
\begin{defn} \label{neat}
We say that $K$ is \emph{neat} if all of the $\Gamma_i$ are torsionfree.
\end{defn}
As mentioned above, we assume that $K^p$ has been chosen so that $K$ is neat.
Then each $\mathcal{Y}_i$
is a manifold with fundamental group $\Gamma_i$.

The manifold $S_G(K)$ has a Borel-Serre compactification $\overline{S_G(K)}$,
which is homotopy equivalent to $S_G(K)$.
Any finite triangulation of $\overline{S_G(K)}$ determines
a resolution
\[ 0 \to C_{d}(\Gamma_i) \to \dotsb \to C_1(\Gamma_i) \to C_0(\Gamma_i) \to \ZZ \to 0 \]
where the $C_j(\Gamma_i)$ are free $\ZZ[\Gamma_i]$-modules of finite rank and $d$ is
the dimension of $S_G(K)$.
We define a complex $C^{\bullet}_{\lambda}$ by
\begin{equation} C^j_{\lambda} \colonequals \bigoplus_i \Hom_{\Gamma_i} (C_j(\Gamma_i),\DD_{\lambda}(g_i I,A)) \,. \label{complexdef}
\end{equation}
Then
\[ R\Gamma^{\bullet}(\Gadq,\DD_{c,\lambda}(\XX,A))
\cong \bigoplus_i R\Gamma^{\bullet}(\Gamma_i, \DD_{\lambda}(g_i I, A))
\cong C^{\bullet}_{\lambda} \]
in the derived category of $A$-modules.

\subsection{Hecke action} \label{sec:hecke}

We choose a projective resolution
\[ \dotsb \to C_1(\Gadq) \to C_0(\Gadq) \to \ZZ \to 0 \]
of $\ZZ$ as a $\Gadq$-module as well as maps of
complexes of $\Gamma_i$-modules
$C_{\bullet}(\Gamma_i) \to C_{\bullet}(\Gadq)$ and
$C_{\bullet}(\Gadq) \to C_{\bullet}(\Gamma_i)$ that are homotopy
inverses of each other.  Then any
$f \in \End_{\Gadq}(\DD_{c,\lambda}(\mathcal{X},A))$ defines an operator
$[f] \in \End \left(C^{\bullet}_{\lambda} \right)$
by
\[ \begin{split}
C^j_{\lambda} \to & \bigoplus_i \Hom_{\Gamma_i} (C_j(\Gadq),\DD_{\lambda}(g_i I,A)) \\
\isom & \Hom_{\Gadq} (C_j(\Gadq),\DD_{c,\lambda}(\mathcal{X},A)) \\
\xrightarrow{f} & \Hom_{\Gadq} (C_j(\Gadq),\DD_{c,\lambda}(\mathcal{X},A)) \\
\isom & \bigoplus_i \Hom_{\Gamma_i} (C_j(\Gadq),\DD_{\lambda} (g_i I,A)) \\
\to & C^j_{\lambda} \,.
\end{split} \]
For any $f,g$, $[f][g]$ is homotopy equivalent to $[fg]$.

For any $g \in G(\Afp)$, the double coset operator $K^p g K^p$ acts on
$\DD_{c,\lambda}$ and determines a Hecke operator
$[K^p g K^p]$ on $C^{\bullet}_{\lambda}$.

Let
\[ T^- \colonequals \left\{ t \in T \middle| t^{-1} N_0^- t \subseteq N_0^- \right\} \,. \]
For $t \in T^-$, the double coset operator
$N_0^- t N_0^-$ acts on $\DD_{c,\lambda}$ and determines
an operator $[N_0^- t N_0^-]$ on $C^{\bullet}_{\lambda}$.
We will sometimes denote this operator
by $u_t$.

\begin{rmk}
Our definition of the Hecke operators at $p$ differs slightly from that of
previous references on overconvergent cohomology, which made use of a choice of
``right $*$-action''.  Our definition is instead meant to be analogous to
the one used in Emerton's theory of completed cohomology \cite{Eme06J,Eme06C}.
The two approaches will yield the same eigenvariety.
The only essential difference between the approaches is that, to
define a ``right $*$-action'', one chooses a splitting of
$0 \to T_0 \to T \to T/T_0 \to 0$, and then uses this splitting
to twist the Hecke operators so that $T_0$ acts trivially.
\end{rmk}

Let $S$ be the set of finite places at which $K^p$ is not maximal hyperspecial.
Let $\AD_f^{p,S}$ be the adeles away from $p$ and $S$, and let
$K^{p,S}$ be the image of $K^{p}$ in $\AD_f^{p,S}$.
We define the Hecke algebra
\[ \HH_G \colonequals C^{\infty}_c \left( K^{p,S} \backslash G(\AD_f^{p,S})/K^{p,S} \times N_0^- \backslash N_0^- T^- N_0^- / N_0^-, \Zp \right) \,. \]

\subsection{Topological properties of Hecke operators} \label{sec:hecketop}

In order to apply the spectral theory introduced in section \ref{sec:spectral},
we will need to choose a particular description of $C^{\bullet}_{\lambda}$
as a limit of complexes of projective modules.
The logarithm induces a bijection between $N_0$ and a finite free $\Zp$-module;
we use this bijection to define a coordinate chart on $N_0$.
This chart allows us to define the projective modules
$\Dar(N_0,A)$ for some arbitrarily chosen topologically nilpotent unit $\alpha \in A$.
Define
\[ C^i_{\lambda,\alpha,r} \colonequals \bigoplus_j \Hom_{\Gamma_j}(C_i(\Gamma_j),\Dar(N_0,A)) \,. \]
\begin{lem} \label{lem:projdiff}
For all sufficiently small $r$ and all $\epsilon>0$,
the differential $d \colon C^{i+1}_\lambda \to C^i_{\lambda}$ extends to a map
$C^{i+1}_{\lambda,\alpha,r} \to C^i_{\lambda,\alpha,r+\epsilon}$.
\end{lem}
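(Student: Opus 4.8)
The plan is to reduce the claim to the one-variable estimates of Section~\ref{sec:laproperties}, applied to the finitely many group elements occurring in the differential. After choosing $\ZZ[\Gamma_j]$-bases of the finite-rank free modules $C_i(\Gamma_j)$ and using the isomorphism $\DD(N_0,A)\isom\DD_\lambda(g_jI,A)$ of Section~\ref{sec:lss}, each $C^i_{\lambda,\alpha,r}$ is identified with a finite direct sum of copies of $\Dar(N_0,A)$ sitting inside $C^i_\lambda$, and in these bases the differential $d$ is a matrix whose entries are elements of $\ZZ[\Gamma_j]$ acting on $\DD(N_0,A)$ through the transported $\Gamma_j$-action. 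Since the complexes $C_\bullet(\Gamma_j)$ are of finite type and the index $j$ ranges over a finite set, only finitely many $\gamma$ appear. So it suffices to find $r_1>0$ such that, for each such $\gamma$ and every $\epsilon>0$, the action of $\gamma$ on $\DD(N_0,A)$ restricts to a continuous $A$-linear map $\Dar(N_0,A)\to\Dar[r+\epsilon](N_0,A)$ for all $0<r<r_1$; then $d$ restricts to the desired continuous map $C^{i+1}_{\lambda,\alpha,r}\to C^i_{\lambda,\alpha,r+\epsilon}$ as a finite $\ZZ$-linear combination of such maps.

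By Section~\ref{sec:lss} the action of $\gamma$ on $\DD(N_0,A)$ is the transpose of the operator $T_\gamma$ on functions given by $(T_\gamma h)(x)=\lambda(\bt(g_j^{-1}\gamma g_j x))\,h(\bn(g_j^{-1}\gamma g_j x))$. In the logarithm chart the map $x\mapsto\bn(g_j^{-1}\gamma g_j x)$ is a globally analytic self-map of $N_0$ (the composite of $N_0\hookrightarrow I$, left translation by the fixed element $g_j^{-1}\gamma g_j\in I$, and the analytic projection $\bn\colon I\to N_0$), and likewise $x\mapsto\bt(g_j^{-1}\gamma g_j x)$ is a globally analytic map $N_0\to T_0$. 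I would then argue that $T_\gamma$ is continuous $\Aar[r+\epsilon](N_0,A)\to\Aar(N_0,A)$ for all small $r$ and all $\epsilon>0$, in two steps. For the multiplication factor: by Lemma~\ref{lem:charan} (equivalently Theorem~\ref{thm:la}(4)) $\lambda\in\AA(T_0,A)$, hence $\lambda\in\AA^{(\alpha,\rho_0)}(T_0,A)$ for some $0<\rho_0<r_0$, where $r_0$ is the constant of Proposition~\ref{prop:comp}; composing with the analytic map $x\mapsto\bt(g_j^{-1}\gamma g_j x)$ via Proposition~\ref{prop:comp} puts $x\mapsto\lambda(\bt(g_j^{-1}\gamma g_j x))$ in $\AA^{(\alpha,\rho')}(N_0,A)$ for a fixed $\rho'>0$ independent of $\gamma$, so for $r\le\rho'$ multiplication by it is a continuous self-map of $\Aar(N_0,A)$ by Lemma~\ref{lem:mult}. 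For the composition factor: enlarging the target only weakens the statement by Lemma~\ref{lem:inccc}, so we may assume $\epsilon<r_0-r$, and then precomposition with $x\mapsto\bn(g_j^{-1}\gamma g_j x)$ is continuous $\Aar[r+\epsilon](N_0,A)\to\Aar(N_0,A)$ by Proposition~\ref{prop:comp}. Composing the two factors, $T_\gamma$ is continuous $\Aar[r+\epsilon](N_0,A)\to\Aar(N_0,A)$ for $0<r<\min(r_0,\rho')$.

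Finally one transposes. Since $T_\gamma$ also maps $\CC(N_0,A)$ to itself, its transpose carries the image of $\LL_b(\CC(N_0,A),A)$ in $\LL_b(\Aar(N_0,A),A)$ into the image of $\LL_b(\CC(N_0,A),A)$ in $\LL_b(\Aar[r+\epsilon](N_0,A),A)$, hence carries the closure $\Dar(N_0,A)$ of the former into the closure $\Dar[r+\epsilon](N_0,A)$ of the latter, with continuity inherited from that of $T_\gamma$. Equivalently, one can read off the matrix of the $\gamma$-action in the orthonormal bases of Lemma~\ref{lem:laconcrete} and verify directly the boundedness and decay conditions appearing in the proof of Lemma~\ref{lem:universal}. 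This yields the required continuous maps $\Dar(N_0,A)\to\Dar[r+\epsilon](N_0,A)$, and assembling the finitely many contributions proves the lemma with $r_1=\min(r_0,\rho')$.

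The step I expect to be the main obstacle is the composition factor: one must be certain that precomposing with an analytic map costs at most an arbitrarily prescribed $\epsilon$ of radius, not a fixed positive amount. This is exactly Proposition~\ref{prop:comp}, and it is crucial that its constant $r_0$ is independent of the analytic map and of the source and target dimensions, so that the finitely many operators coming from the various $\gamma$ are all controlled by a single $r_1$ and the argument iterates over the complex. The $\lambda$-twist produces no further loss of radius, since $\lambda$ lies in a fixed module $\AA^{(\alpha,\rho_0)}(T_0,A)$ and multiplication preserves the radius by Lemma~\ref{lem:mult}.
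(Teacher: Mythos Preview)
Your proposal is correct and follows essentially the same approach as the paper's proof, which simply reduces to showing that each $\gamma\in\Gamma_i$ sends $\Dar(N_0,A)$ into $\Dar[r+\epsilon](N_0,A)$ and then cites Lemmas~\ref{lem:mult} and~\ref{lem:charan} and Proposition~\ref{prop:comp}. You have unpacked this in considerably more detail---making explicit the reduction to finitely many $\gamma$ via the finite-rank free resolution, separating the multiplication-by-$\lambda(\bt(\cdot))$ factor from the precomposition-by-$\bn(\cdot)$ factor, and spelling out the transposition from functions to distributions---but the ingredients and logic are the same.
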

\begin{proof}
It is enough to check that for sufficiently small $r$
and all $\epsilon>0$, left translation by any $\gamma \in \Gamma_i$ maps
$\Dar(N_0,A)$ into $\Dar[r+\epsilon](N_0,A)$.
This follows from the description of the action in section \ref{sec:lss} along
with Lemmas \ref{lem:mult} and \ref{lem:charan} and Proposition \ref{prop:comp}.
\end{proof}

If $\underline{r}=(r_0,\dotsc,r_{d})$ is chosen such that the differentials
$C^{i+1}_{\lambda,\alpha,r_{i+1}} \to C^i_{\lambda,\alpha,r_i}$ are defined, then we denote
the corresponding complex by $C^{\bullet}_{\lambda,\alpha,\underline{r}}$.

Choose some $t \in T^-$ such that $t^{-1} N_0 t \subset N_0^p$.  Let
$\HH'_G$ be the ideal of $\HH_G$ generated by $u_t$.

\begin{lem}
There exists $r_0 \in \RR^+$ so that for all
$r \in (0,r_0)$, $\epsilon \in \RR^+$, and $f \in \HH'_G$,
$f$ determines a continuous map
$C^i_{\lambda,\alpha,r} \to C^i_{\lambda,\alpha,r/p+\epsilon}$,
and hence $f$ determines a completely continuous map
$C^i_{\lambda,\alpha,r} \to C^i_{\lambda,\alpha,r}$.
\end{lem}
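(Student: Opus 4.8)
The plan is to reduce everything to the behavior of the double coset operator $u_t = [N_0^- t N_0^-]$ on the local pieces $\Dar(N_0,A)$, since $\HH'_G$ is generated by $u_t$ and the prime-to-$p$ Hecke operators $[K^p g K^p]$ act by correspondences that do not involve the $p$-adic analytic structure of $N_0$ (and in particular preserve the index $r$ up to an arbitrarily small loss, by Lemma \ref{lem:projdiff}-type reasoning). So the key point is: \emph{the operator $u_t$ sends $\Dar(N_0,A)$ continuously into $\Dar[r/p+\epsilon](N_0,A)$} for all sufficiently small $r$ and all $\epsilon>0$. Granting this, one composes with the completely continuous inclusion $\Dar[r/p+\epsilon](N_0,A) \hookrightarrow \Dar(N_0,A)$ from Lemma \ref{lem:inccc} (valid once $r/p+\epsilon < r$, i.e.\ for $\epsilon$ small) to conclude that $u_t$, and hence every $f \in \HH'_G$, is completely continuous on each $C^i_{\lambda,\alpha,r}$.

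To establish the improvement of the radius from $r$ to $r/p$, I would unwind the definition of $u_t = [N_0^- t N_0^-]$ on $\DD_\lambda(g_i I, A) \cong \DD(N_0, A)$ exactly as in \S\ref{sec:lss}. The coset decomposition of $N_0^- t N_0^-/N_0^-$ is indexed by $N_0/tN_0t^{-1}$ (or rather the relevant quotient), and each term in the operator is, up to multiplication by the analytic character $\lambda \circ \bt$ and composition with an analytic function $\bn(\cdot)$, given by the map on distributions induced by an affine-linear map of the form $n \mapsto t n t^{-1} \cdot(\text{translation})$ on $N_0$. The crucial structural input is our choice of $t$ with $t^{-1} N_0 t \subset N_0^p$: conjugation by $t$ on $N_0$, in the logarithmic coordinates, is a $\Zp$-linear map whose image lies in $p N_0$, so it factors as multiplication-by-$p$ followed by a $\Zp$-linear automorphism. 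Proposition \ref{prop:local} is precisely the statement that multiplication by $p$ on coordinates induces $\Dar(\Zp^k,A) \cong \Dar[pr](\Zp^k,A)^{\oplus p^k}$ (equivalently, the dual map on distributions takes radius $r$ to radius $r/p$, up to the $\epsilon$ and the summands); combined with Lemma \ref{lem:mult} (to absorb the analytic character $\lambda$, which lies in $\Aar[1]$ by Lemma \ref{lem:charan}) and Proposition \ref{prop:comp} (to absorb the analytic changes of variable $\bn$ and the $\Zp$-linear automorphism part), this yields continuity $\Dar(N_0,A) \to \Dar[r/p+\epsilon](N_0,A)$ for $r$ small and $\epsilon$ arbitrary. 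Summing over the finitely many cosets in the double coset and over the finitely many $j$ indexing the direct sum defining $C^i_{\lambda,\alpha,r}$ preserves continuity.

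I expect the main obstacle to be bookkeeping rather than a genuine difficulty: one must check that the losses in radius introduced by Propositions \ref{prop:comp} and \ref{prop:local} and by Lemma \ref{lem:mult} can all be made smaller than any prescribed $\epsilon$ simultaneously — i.e.\ that the ``sufficiently small $r$'' hypotheses in those three results are compatible and that only the one factor of $p$ coming from conjugation by $t$ produces the essential contraction, while all other contributions merely eat into the $\epsilon$ budget. One also has to verify that the $\Gamma_i$-equivariance is respected, so that the operator is well-defined on $\Hom_{\Gamma_j}(C_i(\Gamma_j), \Dar(N_0,A))$; but since $u_t$ commutes with the $\Gamma_i$-action up to the usual Hecke-correspondence formalism, and $C_i(\Gamma_j)$ is a finite free $\ZZ[\Gamma_j]$-module, this is formal. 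The final sentence of the lemma — that $f$ therefore determines a completely continuous endomorphism of $C^i_{\lambda,\alpha,r}$ — then follows by factoring $C^i_{\lambda,\alpha,r} \to C^i_{\lambda,\alpha,r/p+\epsilon} \hookrightarrow C^i_{\lambda,\alpha,r}$, with the second arrow completely continuous by Lemma \ref{lem:inccc} and the composite of a continuous map with a completely continuous map being completely continuous.
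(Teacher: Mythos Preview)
Your approach is essentially the same as the paper's: reduce to the action of $u_t$ on $\Dar(N_0,A)$, observe that each summand of the double-coset operator sends $[x]$ to $\lambda(\bt(\iota(x)))[\bn(\iota(x))]$ with $\bn(\iota(x))$ lying in a single coset of $t^{-1} N_0 t \subset N_0^p$, and then invoke Proposition~\ref{prop:local} for the factor-of-$p$ contraction, Lemma~\ref{lem:mult}/\ref{lem:charan} and Proposition~\ref{prop:comp} for the analytic pieces, and Lemma~\ref{lem:inccc} for complete continuity. The only slip is the direction of conjugation---the contracting map is $n \mapsto t^{-1} n t$ (consistent with the hypothesis $t^{-1} N_0 t \subset N_0^p$), not $n \mapsto t n t^{-1}$ as you wrote---but this does not affect the argument.
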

\begin{proof}
We can show that Hecke operators away from $p$ map $C^i_{\lambda,\alpha,r}$
into $C^i_{\lambda,\alpha,r+\epsilon}$ using essentially the
same argument as in Lemma \ref{lem:projdiff}.  It remains to show
that $u_t$ maps $C^i_{\lambda,\alpha,r}$ into
$C^i_{\lambda,\alpha,r/p+\epsilon}$.
The action of $u_t$ can be built from functions of the form
\[ [x] \mapsto \lambda(\bt(\iota(x))) [\bn(\iota(x))]  \]
where $\iota(x)$ takes the form
\[ \iota(x) = h t^{-1} \bn(n^- x) \bt(n^- x) t \]
for some $n^- \in N_0^-$, $h \in I$.
(See for example \cite[Lemma 4.2.19]{Eme06J}.)
In particular, $\bn(\iota(x))$ belongs to a
single right coset of $t^{-1} N_0 t \subset N_0^p$.
The argument proceeds as before, except that we also need to use
Proposition \ref{prop:local} and Lemma \ref{lem:inccc}.
\end{proof}

\subsection{Characteristic power series} \label{sec:auto-fredholm}
For any $f \in \HH'_G$, we define
the power series
\[ \det\left(1-Xf \middle| C^{\bullet}_{\lambda} \right) \colonequals
\det\left(1-Xf \middle| C^{\bullet}_{\lambda,\alpha,\ur} \right) \]
for any $\alpha,\ur$ for which the complex $C^{\bullet}_{\lambda,\alpha,\ur}$
is defined and the $u_t$ operator is completely continuous.
Choosing a different $\alpha$ and $\ur$ conjugates the matrix
of $f$ by a diagonal matrix, so the power series does not depend on
them.

Similarly, we define $\det \left(1-Xf \middle| C^{i}_{\lambda} \right)
\colonequals \det \left(1-Xf \middle| C^{i}_{\lambda,\alpha,\ur} \right)$.
Consider the Fredholm series
\[ P_+(X) \colonequals \prod_{i=0}^{d} \det \left( 1-Xu_t \middle| C^i_{\lambda} \right) \,. \]
Suppose that $P_+(X)$ factors as $Q_+(X) S_+(X)$, with
$Q_+(X) \in A[X]$, $S_+(X) \in A \fs{X}$, that $Q_+(X)$ and $S_+(X)$ are
relatively prime, and that the leading coefficient of $Q_+(X)$ is invertible.
Let $Q_+^*(X)=X^{\deg Q_+} Q_+(X^{-1})$.  By
\cite[Th\'eor\`eme B.2]{AIP15}, there is a decomposition
$C_{\lambda,\alpha,\ur}^{\bullet} = N^{\bullet}_{\alpha,\ur} \oplus F^{\bullet}_{\alpha,\ur}$,
where
$Q_+^*(u_t)$ annihilates $N^{\bullet}_{\alpha,\ur}$ and acts invertibly on
$F^{\bullet}_{\alpha,\ur}$, and the $N^i_{\alpha,\ur}$ are
finitely generated and projective.
\begin{lem}
For any $\alpha,\alpha'$ and $\ur,\ur'$ such that
$N^{\bullet}_{\alpha,\ur}$ and
$N^{\bullet}_{\alpha',\ur'}$ are defined, they are canonically isomorphic.
\end{lem}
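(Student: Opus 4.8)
The plan is to compare the two slope decompositions by passing through a common refinement. First I would recall that the spaces $C^{\bullet}_{\lambda,\alpha,\ur}$ for varying $\alpha$ and $\ur$ are all built from the modules $\Dar(N_0,A)$, and that by Lemma~\ref{lem:inccc} and the discussion following Lemma~\ref{lem:laconcrete} there are completely continuous transition maps $\Dar[s](N_0,A) \hookrightarrow \Dar[r](N_0,A)$ for $s<r$ which commute with the $\Gamma_i$-actions and with $u_t$ (up to the controlled change in radius provided by the lemmas of section~\ref{sec:hecketop}). The key point, which I would isolate as the first step, is that the inclusion $C^{\bullet}_{\lambda,\alpha,\ur'} \hookrightarrow C^{\bullet}_{\lambda,\alpha,\ur}$ for $\ur' \geq \ur$ (componentwise, in a range where both complexes and the operator $u_t$ are defined) is a homotopy equivalence commuting with $u_t$ up to homotopy, since both complexes compute $R\Gamma^{\bullet}(\Gadq,\DD_{c,\lambda}(\XX,A))$ after inverting nothing — more precisely, both are chain homotopy equivalent to $C^{\bullet}_\lambda$ compatibly with the Hecke action, by the construction in section~\ref{sec:hecke}. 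Changing $\alpha$ to $\alpha'$ only conjugates the matrix of $u_t$ by a diagonal matrix and so gives a literal isomorphism of complexes intertwining the $u_t$-actions.

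Second, I would invoke the functoriality of the slope decomposition from \cite[Th\'eor\`eme B.2]{AIP15} (equivalently the Riesz-theory package of \cite[\S 2]{Urb11}): given a completely continuous $u_t$ on a complex of projective $A$-modules and a factorization $P_+ = Q_+ S_+$ as in the hypotheses, the summand $N^{\bullet}$ on which $Q_+^*(u_t)$ is nilpotent is functorial for maps of complexes that commute with $u_t$, and is unchanged under chain homotopy equivalences that commute with $u_t$ up to homotopy. This is the same mechanism Urban uses; the only subtlety is that our maps commute with $u_t$ only after enlarging the radius slightly, so one applies the decomposition on the target radius and pulls back. Concretely, for $\ur,\ur'$ and $\alpha,\alpha'$ given, I would choose $\ur''$ large enough and an $\alpha''$ so that there are $u_t$-equivariant (up to homotopy) maps from both $C^{\bullet}_{\lambda,\alpha,\ur}$ and $C^{\bullet}_{\lambda,\alpha',\ur'}$ into $C^{\bullet}_{\lambda,\alpha'',\ur''}$ which are homotopy equivalences; functoriality of the slope-$\leq h$ part then yields canonical isomorphisms $N^{\bullet}_{\alpha,\ur} \isom N^{\bullet}_{\alpha'',\ur''} \isom N^{\bullet}_{\alpha',\ur'}$. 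That the composite isomorphism is independent of the auxiliary choice $(\alpha'',\ur'')$ follows because any two such choices are themselves dominated by a third, and the slope projectors are genuinely canonical (they are polynomials in $u_t$, namely the image of a suitable idempotent in $A[X]_{(Q_+)}$), so all the comparison maps in sight are compatible.

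The main obstacle I anticipate is bookkeeping the radii: the operator $u_t$ does not act on a single $C^{\bullet}_{\lambda,\alpha,\ur}$ but only improves the radius, so ``the slope decomposition of $u_t$ on $C^{\bullet}_{\lambda,\alpha,\ur}$'' must be interpreted correctly (as a decomposition of the cohomology, or of a sufficiently iterated complex), exactly as in \cite[\S 4.3]{Urb11}. Once one adopts Urban's convention that the $N^{\bullet}_{\alpha,\ur}$ are extracted via $Q_+^*(u_t)$ acting through the chain of inclusions, the functoriality statement is the one already recorded in \cite[\S 2]{Urb11} and \cite[Th\'eor\`eme B.2]{AIP15}, and the proof is a diagram chase. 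I would end by remarking that the same argument shows the isomorphisms are compatible with the full Hecke algebra $\HH_G$ action and with the differentials, which is what is needed in the sequel.
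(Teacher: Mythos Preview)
Your overall strategy—pass to a common refinement and use that the slope projector is a polynomial in $u_t$—is the right one, and it is what the paper does. But you have wrapped it in an unnecessary and unjustified layer. You assert that the inclusions $C^{\bullet}_{\lambda,\alpha,\ur'} \hookrightarrow C^{\bullet}_{\lambda,\alpha,\ur}$ are chain homotopy equivalences because ``both complexes compute $R\Gamma^{\bullet}(\Gadq,\DD_{c,\lambda}(\XX,A))$''. That last claim is not established in the paper for the complexes with fixed radii $\ur$: the identification with $R\Gamma^{\bullet}$ is made only for the limit complex $C^{\bullet}_{\lambda}$, and over a general complete Tate ring $A$ it is not automatic that a quasi-isomorphism of (infinite-rank) projective complexes is a homotopy equivalence. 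So the ``functoriality of the slope decomposition under homotopy equivalence'' step, as you have phrased it, rests on something you have not proved.

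The paper sidesteps this entirely with a much shorter argument that uses only what you yourself noted parenthetically at the end, namely that the projector onto $N^{\bullet}$ is literally a polynomial in $u_t$. Concretely: choose $\ur''$ \emph{small} enough that $C^{\bullet}_{\lambda,\alpha,\ur''}$ injects into both $C^{\bullet}_{\lambda,\alpha,\ur}$ and $C^{\bullet}_{\lambda,\alpha',\ur'}$ (this is the opposite direction from your ``large enough'' choice, and it works because $\Dar[s] \hookrightarrow \Dar[r]$ for $s<r$). The operator $1 - Q_+^*(u_t)/Q_+^*(0)$ acts as the identity on $N^{\bullet}_{\alpha,\ur}$ since $Q_+^*(u_t)$ kills it, and for $n$ large enough its $n$th power factors through $C^{\bullet}_{\lambda,\alpha,\ur''}$ because each application of $u_t$ shrinks the radius by a factor of $p$. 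This immediately gives $N^{\bullet}_{\alpha,\ur} \cong N^{\bullet}_{\alpha,\ur''} \cong N^{\bullet}_{\alpha',\ur'}$, with no appeal to homotopy equivalence, derived categories, or the abstract Riesz package. Your remark that the projectors are polynomials in $u_t$ is exactly the whole proof; the rest of your outline can be deleted.
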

\begin{proof}
Choose $\ur''$ so that $C^{\bullet}_{\lambda,\alpha,\ur''}$ injects into
$C^{\bullet}_{\lambda,\alpha,\ur}$ and $C^{\bullet}_{\lambda,\alpha',\ur'}$.
The operator $1 - \frac{Q_+^*(u_t)}{Q_+^*(0)}$ acts as the identity on
$N^{\bullet}_{\alpha,\ur}$,
and for sufficiently large $n$,
$\left(1 - \frac{Q_+^*(u_t)}{Q_+^*(0)} \right)^n$
factors through $N^{\bullet}_{\alpha,\ur''}$.  So we get a canonical isomorphism
$N^{\bullet}_{\alpha,\ur} \cong N^{\bullet}_{\alpha,\ur''}$, and similarly
there is a canonical isomorphism $N^{\bullet}_{\alpha',\ur'} \cong N^{\bullet}_{\alpha,\ur''}$.
\end{proof}
\begin{cor}
There is a decomposition $C_{\lambda}^{\bullet}=N^{\bullet} \oplus F^{\bullet}$,
where $Q_+^*(u_t)$ annihilates $N^{\bullet}$ and acts invertibly on
$F^{\bullet}$,
and the $N^i$ are finitely generated and projective.
\end{cor}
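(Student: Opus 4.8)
The plan is to assemble the decomposition from the complexes $C^{\bullet}_{\lambda,\alpha,\ur}$ by invoking the previous lemma. First I would choose any $\alpha$ and $\ur$ for which $C^{\bullet}_{\lambda,\alpha,\ur}$ is defined and $u_t$ acts completely continuously on it; by the hypotheses on $P_+(X)=Q_+(X)S_+(X)$ and the fact that $P_+(X)$ is (up to sign conventions) the Fredholm determinant of $u_t$ on this complex, the Riesz-theory result \cite[Th\'eor\`eme B.2]{AIP15} cited just above yields the decomposition $C_{\lambda,\alpha,\ur}^{\bullet} = N^{\bullet}_{\alpha,\ur} \oplus F^{\bullet}_{\alpha,\ur}$ into the subcomplex where $Q_+^*(u_t)$ is nilpotent and the one where it is invertible, with $N^i_{\alpha,\ur}$ finitely generated projective.

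Next I would set $N^{\bullet} \colonequals N^{\bullet}_{\alpha,\ur}$ and $F^{\bullet} \colonequals F^{\bullet}_{\alpha,\ur}$ for this fixed choice, but reinterpret them as a decomposition of $C^{\bullet}_{\lambda}$ rather than of the model $C^{\bullet}_{\lambda,\alpha,\ur}$. The point is that although $C^{\bullet}_{\lambda,\alpha,\ur}$ depends on the auxiliary data, the finite-slope part does not: the preceding lemma shows that $N^{\bullet}_{\alpha,\ur}$ is, canonically, independent of $(\alpha,\ur)$, and since $N^i_{\alpha,\ur}$ is finitely generated over the Noetherian ring $A$ and is cut out by the idempotent $\lim_n (1-Q_+^*(u_t)/Q_+^*(0))^n$ acting on cohomology, it represents the same object in the derived category as the corresponding direct summand computed from $C^{\bullet}_{\lambda}$ itself (recall $C^{\bullet}_{\lambda,\alpha,\ur}$ and $C^{\bullet}_{\lambda}$ are linked by the injections $C^{\bullet}_{\lambda,\alpha,\ur''} \hookrightarrow C^{\bullet}_{\lambda,\alpha,\ur}$ of the spectral setup, all compatible with $u_t$). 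Thus the decomposition descends.

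The one genuine point to check — and the step I expect to be the main obstacle — is the direct-sum statement at the level of the complex $C^{\bullet}_{\lambda}$, i.e.\ that $C^{\bullet}_{\lambda} = N^{\bullet} \oplus F^{\bullet}$ with $F^{\bullet}$ the (now necessarily non-finitely-generated, non-projective) complement, rather than merely a statement about $C^{\bullet}_{\lambda,\alpha,\ur}$. Here I would argue that the idempotent projector onto $N^{\bullet}_{\alpha,\ur}$, being given by a polynomial expression in $u_t$ composed with the comparison maps to $C^{\bullet}_{\lambda,\alpha,\ur''}$, is defined on $C^{\bullet}_{\lambda}$ directly: for $n$ large, $(1-Q_+^*(u_t)/Q_+^*(0))^n$ factors through the finite-slope part and stabilizes, defining an idempotent $e$ on $C^{\bullet}_{\lambda}$ with $eC^{\bullet}_{\lambda} \cong N^{\bullet}$ and $(1-e)C^{\bullet}_{\lambda} = F^{\bullet}$, on which $Q_+^*(u_t)$ then acts invertibly by construction. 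Finite generation and projectivity of $N^i$ are inherited from the $(\alpha,\ur)$-level statement via the canonical isomorphism. Everything else is bookkeeping, so I would keep the write-up brief, essentially: ``Take the decomposition of $C^{\bullet}_{\lambda,\alpha,\ur}$ from \cite[Th\'eor\`eme B.2]{AIP15}, observe via the previous lemma that it is independent of the auxiliary choices, and transport it to $C^{\bullet}_{\lambda}$ using the idempotent $\lim_n(1-Q_+^*(u_t)/Q_+^*(0))^n$.''
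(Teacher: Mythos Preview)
Your overall strategy is right and matches what the paper intends: the corollary is stated without proof and is meant to follow from the preceding lemma by passing from the auxiliary complexes $C^{\bullet}_{\lambda,\alpha,\ur}$ to $C^{\bullet}_{\lambda}$. However, your specific mechanism for producing the idempotent on $C^{\bullet}_{\lambda}$ is incorrect. The operators $(1-Q_+^*(u_t)/Q_+^*(0))^n$ do act as the identity on $N^{\bullet}_{\alpha,\ur}$, but there is no reason for them to converge to zero on $F^{\bullet}_{\alpha,\ur}$: on an eigenvector of $u_t$ in $F^{\bullet}$ with unit eigenvalue $\mu$, the operator acts as multiplication by $(1-Q_+^*(\mu)/Q_+^*(0))^n$, which need not tend to zero. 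So the sequence does not ``stabilize'' to an idempotent, and your proposed $e$ does not exist as written.

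The clean fix avoids any limit of operators and works directly at the module level. Since each $C_i(\Gamma_j)$ is finite free over $\ZZ[\Gamma_j]$ and $\DD(N_0,A)=\varprojlim_r \Dar(N_0,A)$, one has $C^i_{\lambda}=\varprojlim_r C^i_{\lambda,\alpha,r}$. The Riesz decompositions $C^i_{\lambda,\alpha,r}=N^i_{\alpha,r}\oplus F^i_{\alpha,r}$ are compatible with the transition maps: both summands are characterized by the action of $Q_+^*(u_t)$, which commutes with the transitions, and the lemma shows the transitions are isomorphisms on the $N$-parts. Taking the inverse limit gives $C^i_{\lambda}=N^i\oplus F^i$ with $N^i\cong N^i_{\alpha,r}$ finitely generated projective, $Q_+^*(u_t)$ zero on $N^i$, and invertible on $F^i=\varprojlim_r F^i_{\alpha,r}$ (the inverses on each level are compatible, hence assemble on the limit). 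Equivalently, the Riesz projectors $e_r$ onto $N^i_{\alpha,r}$ are compatible and glue to an idempotent on $C^i_{\lambda}$; this is the idempotent you were after, but it is not given by your formula.
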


\section{Eisenstein and cuspidal contributions to characteristic power series}
\label{sec:eisenstein}

\subsection{Preliminaries}

In this section, we will write $C^{\bullet}_{G,K^p,\lambda}$ for
$C^{\bullet}_{\lambda}$ to make it clear which group we are considering.
We will also assume that $G(\RR)$ has discrete series (i.e.~$G(\RR)$ admits representations
with essentially square integrable matrix coefficients, or equivalently
$G(\RR)$ has a maximal torus that is compact modulo $Z_G(\RR)$),
since otherwise Urban's eigenvariety will be empty.

In order to construct Urban's eigenvariety, we need the characteristic power
series of the Hecke operators to be Fredholm series.
However, the power series
$\det \left(1-Xf \middle| C^{\bullet}_{G,K^p,\lambda} \right)$
includes contributions from both cusp forms and Eisenstein series,
and the Eisenstein contribution is generally only a ratio of Fredholm series.
We will now define a complex $C^{\bullet}_{G,K^p,\lambda,\cusp}$
whose characteristic power series only includes contributions from cusp forms.
(This complex will only be useful for defining characteristic power series; we
make no attempt to remove the Eisenstein series from the cohomology.)

We will mostly follow \cite[\S 4.6]{Urb11}.
However, there is an error in the handling of the Eisenstein series in
\cite{Urb11} that we will need to correct.  The region of
convergence of an Eisenstein series is generally not a union of Weyl
chambers.  (For example, $Sp(6)$ has two conjugacy classes of parabolic
subgroups whose Levis are isomorphic to $GL(2) \times GL(1)$.  The
region of convergence of Eisenstein series coming from these parabolics
contains one or two full Weyl chambers and fractions of
three others.)
Consequently, the set $\mathcal{W}^M_{\operatorname{Eis}}$ defined in
\cite{Urb11}
should depend on the weight of the Eisenstein series.  A more careful argument
is therefore needed to show that character distribution $I^{cl}_{G,0}(f,\mu)$
has a unique $p$-adic interpolation.  In fact, it appears that the character
distribution of Eisenstein series coming from a single parabolic subgroup will
generally \emph{not} have a unique interpolation.
We will show, however, that the sum of distributions coming
from parabolic subgroups that have a common Levi will have a unique
interpolation.

Let $W_G$ denote the Weyl group of $G$.  Let
$\Phi_G$, $\Phi_G^{\vee}$ denote the set of roots and coroots, respectively,
of the pair ($G_{\Qp}$, $T$), where $T$ is the torus
chosen in section \ref{sec:automorphic}.  Let $\Phi_G^+$ (resp. $\Phi_G^-$)
denote the subset of roots that
are positive (resp.\ negative) with respect to $B$, and we make a similar definition for coroots.
Let $\rho$ denote half the sum of the roots in $\Phi_G^+$.

Let $F$ be a finite extension of $\mathbb{Q}_p$.  We say that
$\mu \colon T_0 \to F^{\times}$ is an algebraic weight if it can be
extended to a homomorphism of algebraic groups $T_{F} \to (\mathbb{G}_{m})_F$.
We say that an algebraic weight $\mu$ is dominant (resp.\ regular dominant)
if $\ang{ \alpha^{\vee}, \mu } \ge 0$ (resp. $>0$) for all
$\alpha^{\vee} \in \Phi_G^{\vee+}$.

Suppose that $\mu$ is dominant.
Then $\DD_{\mu}(g_i I,F)$ has
a (nonzero) quotient that is a finite-dimensional $F$-vector space.
We will write $L^G_{\mu}$ for the corresponding local system
on either $S_G(K)$ or $\overline{S_G(K)}$.  (In \cite{Urb11}, this local
system is denoted $\mathbb{V}_{\lambda}^{\vee}$.)

\begin{lem} \label{lem:interp}
Let $f = u_t \otimes f^p \in \HH_G'$, and let $\mu \colon T_0 \to F^{\times}$ be
an algebraic dominant weight.
Then
\[ \det \left(1-Xf|C^{\bullet}_{G,K^p,\mu} \right) \equiv
\det \left(1-Xf|H^{\bullet}(S_G(K),L^G_{\mu}) \right)
\pmod {\OO_{F} \db{N(\mu,t)X}} \]
where
\[ N(\mu,t) \colonequals \inf_{w \in W_G \setminus \{\id\}} |t^{(w-1)(\mu+\rho)}|_p \,. \]
\end{lem}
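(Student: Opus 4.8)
The plan is to compare the two Fredholm-type series term by term, degree by degree, using the overconvergent-to-classical specialization map. First I would recall that since $\mu$ is an algebraic dominant weight, each $\DD_{\mu}(g_i I, F)$ surjects onto a finite-dimensional $F$-vector space $V_\mu$ realizing the local system $L^G_\mu$, and this surjection is $\Gamma_i$-equivariant and Hecke-equivariant (for the $*$-twisted action). Passing to the complexes, there is a map $C^{\bullet}_{G,K^p,\mu} \to C^{\bullet}_{\mathrm{cl}}$ computing $H^{\bullet}(S_G(K), L^G_\mu)$, compatible with $u_t$ and with $f^p$. The key point is to control the kernel: I would show that on the kernel complex, the operator $u_t$ acts topologically with all ``slopes'' bounded below by $v_p(N(\mu,t))$, i.e.\ $u_t$ is divisible (in the relevant completed sense) by $N(\mu,t)$ up to a completely continuous error, so that $\det(1-Xu_t \otimes f^p \mid \ker)$ lies in $1 + \OO_F\db{N(\mu,t)X}$.

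The heart of the slope estimate is the following local computation, which is essentially Urban's. The module $\DD_\mu(g_i I, F) \cong \DD(N_0, F)$ has a filtration (or rather a decomposition after base change) indexed by the Mahler/monomial degree, and the classical quotient $V_\mu$ corresponds to the lowest-degree part — concretely, $V_\mu$ is the image of polynomial functions of degree $\le$ the relevant weight bound, and the kernel is spanned by the higher Mahler coefficients $\binom{z}{n}$ with $\sum n$ large. The operator $u_t = [N_0^- t N_0^-]$ acts on the $\binom{z}{n}$-part, after the conjugation by $t$, by scaling that shifts monomial degrees and picks up a factor governed by $t^{(w-1)(\mu+\rho)}$ for the various $w \in W_G$; the point is that on the kernel (higher-degree part) every eigenvalue-type contribution is divisible by $\inf_{w \ne \mathrm{id}} |t^{(w-1)(\mu+\rho)}|_p = N(\mu,t)$. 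I would make this precise by writing $\iota(x) = h t^{-1} \bn(n^- x)\bt(n^- x) t$ as in Section~\ref{sec:hecketop}, computing the effect of conjugation by $t$ on $\Dar(N_0, A)$ in the orthonormal basis of Lemma~\ref{lem:laconcrete}, and reading off that the matrix of $u_t$ restricted to the kernel has entries in $N(\mu,t)\cdot\OO_F$ plus a completely continuous correction.

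Given this, I would conclude as follows: using the decomposition $C^{\bullet}_{G,K^p,\mu} \to C^{\bullet}_{\mathrm{cl}}$ with kernel complex $\KK^\bullet$, multiplicativity of $\det(1-Xf\mid -)$ in short exact sequences of complexes of projective modules (Lemma~\ref{lem:detinvt} and the formalism of Section~\ref{sec:spectral}) gives
\[
\det(1-Xf \mid C^{\bullet}_{G,K^p,\mu}) = \det(1-Xf \mid \KK^\bullet)\cdot \det(1-Xf \mid C^{\bullet}_{\mathrm{cl}}),
\]
and $\det(1-Xf \mid C^{\bullet}_{\mathrm{cl}})$ is (up to the usual alternating product) $\det(1-Xf \mid H^{\bullet}(S_G(K),L^G_\mu))$ since the classical complex is a bounded complex of finite free modules computing that cohomology. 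Finally $\det(1-Xf\mid \KK^\bullet) \equiv 1 \pmod{\OO_F\db{N(\mu,t)X}}$ by the slope bound, since $f = u_t \otimes f^p$ with $f^p$ having integral matrix, and the $u_t$-part contributes the divisibility by $N(\mu,t)$ in each positive $X$-degree. This yields the claimed congruence.

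\textbf{Main obstacle.} The delicate step is the local slope computation: establishing that the matrix of $u_t$ on the kernel of the specialization map is divisible by $N(\mu,t)$ in the appropriate completed/orthonormalizable sense, uniformly, with the Weyl-group infimum being exactly the right bound rather than something weaker. This is precisely where Urban's original argument had its gap concerning regions of convergence, so I would need to be careful to phrase the estimate purely in terms of the explicit formula for the $u_t$-action on $\Dar(N_0,A)$ via $\iota(x)$ and the factors $t^{(w-1)(\mu+\rho)}$, avoiding any assumption that a region of convergence is a union of Weyl chambers; the correct statement only needs the crude bound $v_p(n!) - \sum_i v_p(m_i!) \ge \lfloor(n-\sum m)/p\rfloor$ type estimate from Section~\ref{sec:la} together with the weight-dependent factor, and does not require identifying which $w$ achieves the infimum.
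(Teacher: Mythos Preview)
Your overall strategy---specialize via the surjection from $C^\bullet_{G,K^p,\mu}$ onto the classical complex, bound the $u_t$-slopes on the kernel below by $v_p(N(\mu,t))$, and deduce the congruence from multiplicativity of the characteristic series in short exact sequences---is correct and is precisely the argument of \cite[Lemma~4.5.2]{Urb11}, which the paper simply cites (Urban states it for the degree-one coefficient, and the same estimate propagates to all coefficients of the Fredholm series).

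Your ``Main obstacle'' paragraph, however, rests on a misidentification. The error in \cite{Urb11} about regions of convergence being unions of Weyl chambers has nothing to do with the slope estimate in this lemma: it is an \emph{archimedean} statement about where Eisenstein series induced from a given Levi converge, and it enters only later, in the analysis of boundary cohomology (see Sections~\ref{sub:bsbdy}--\ref{sub:img} and Proposition~\ref{prop:eisprod2} of the paper). The present lemma is a purely $p$-adic local computation on $\DD_\mu$: the kernel of the surjection onto the finite-dimensional quotient carries a filtration governed by the weights $w(\mu+\rho)-\rho$ for $w\neq\id$, and on each piece the normalized $u_t$ picks up the extra factor $t^{(w-1)(\mu+\rho)}$; that is the origin of the infimum $N(\mu,t)$, and Urban's argument here is not in question. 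So you may drop the caveat and the detour through the Mahler-coefficient estimates of Section~\ref{sec:la}; they are not what drives this bound.
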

\begin{proof}
For the degree 1 term, this is \cite[Lemma 4.5.2]{Urb11}.  The argument
used there also works for higher degree terms.
\end{proof}

In section \ref{sec:eigenvariety}, we will consider a family of weights having the property
that for any $n \in \NN$, the set of points corresponding to regular dominant weights $\mu$
satisfying $p^n \mid N(\mu,t)$ is Zariski dense.  The characteristic power series for the whole
family can then be determined from the $\det(1-Xf | H^{\bullet}(S_G(K),L^G_{\mu}))$.

If $\mu$ is regular dominant, then the cuspidal subspace of $H^{i}(S_G(K),L^G_{\mu})$
is the interior cohomology $H^i_!(S_G(K),L^G_{\mu})$ \cite[\S 5.3]{LS04},
and furthermore (since we assume $G(\RR)$ has discrete series)
the interior cohomology is nonzero only in the middle degree \cite[Theorem III.5.1]{BW00}.
Hence either $\det(1-Xf|H^{\bullet}_!(S_G(K),L^G_{\mu}))$ or its reciprocal is a polynomial.

Our goal is to prove a version of Lemma \ref{lem:interp} in which
$C^{\bullet}_{G,K^p,\mu}$ is replaced by a complex
$C^{\bullet}_{G,K^p,\mu,\cusp}$ that we will define,
and $H^{\bullet}(S_G(K),L^G_{\mu})$ is replaced by
$H^{\bullet}_!(S_G(K),L^G_{\mu})$.

\subsection{Cohomology of the Borel-Serre boundary} \label{sub:bsbdy}

Eisenstein series arise from the Borel-Serre boundary
$\partial S_G(K) \colonequals \overline{S_G(K)} \setminus S_G(K)$ of $S_G(K)$.
The boundary has a stratification by locally symmetric spaces
of parabolic subgroups of $G$.

We warn the reader that the Borel-Serre compactification $\overline{S_G(K)}$ is slightly strange.
When constructing a locally symmetric space, one usually takes a quotient by
the identity component of either $Z_G(\RR)$ or $A_G(\RR)$, where $A_G$ is the $\QQ$-split part of $Z_G$.
In order to construct Urban's eigenvariety, we need to choose the former option, but the Borel-Serre
compactification behaves better with respect to the latter.  Consequently, if $M$ is a Levi
subgroup of $G$, then the locally symmetric space for $M$ should be constructed by taking
a quotient by the identity component of $Z_G(\RR) A_M(\RR)$ rather than that of $Z_M(\RR)$.
However, it will turn out that we only need to consider Levi subgroups for which the two quotients
are the same; see section \ref{sub:img} for more details.

Let $P$ be a parabolic subgroup of $G$, let $N$ be the maximal unipotent subgroup of $P$,
and let $M=P/N$ be its Levi quotient.
Let $K^p_P = K^p \cap P(\Afp)$, $K_{P,p}=I \cap P(\Qp)$, $K_P = K^p_P K_{P,p}$.
We can define a locally symmetric space $S_P(K_P)$, and there is a locally
closed immersion
\[ \iota \colon S_P(K_P) \to \overline{S_G(K)} \,. \]
If $P'$ is another parabolic subgroup of $G$,
then $S_P(K_P)$ and $S_{P'}(K_{P'})$ will have the same image in
$\overline{S_G(K)}$ if and only if $P(\Af)$ and $P'(\Af)$ are conjugate
by an element of $K^p I$.

Let $K^p_M$, $I_M$ be the images of $K^p_P$, $K_{P,p}$ in $M(\Afp)$, $M(\Qp)$, respectively.
The group $I_M$ is an Iwahori subgroup of $M$.  Let $K_M = I_M K^p_M$.  The locally symmetric space
$S_P(K_P)$ is a nilmanifold bundle over $S_M(K_M)$.  Let
\[ \pi \colon S_P(K_P) \to S_M(K_M) \]
denote the projection.

We can relate $R \pi_* \iota^* L^G_{\mu}$ to local systems on $S_M(K_M)$
using the Kostant decomposition \cite[Theorem III.3.1]{BW00}.
To define the local systems on $S_M(K_M)$, we first need to choose a
quasisplit torus $T_M$ of $M$.
The parabolic subgroup $P_{\Qp}$ contains a conjugate of $B$.
There is a decomposition $G(\Qp) = I N_G(S)(\Qp) B(\Qp)$, where $N_G(S)$
is the normalizer of the maximal split subtorus $S$ of $T$; this follows from
\cite[\S 4.2.5, Th\'eor\`eme 5.1.3]{bruhat-tits-1} as well as from \cite[Proposition 7.3.1]{bruhat-tits-1}.
So $iwBw^{-1}i^{-1} \subseteq P_{\Qp}$ for some $i \in I$, $w \in N_G(S)(\Qp)$.
We choose $i$ and $w$ to minimize the length of the image of $w$ in the Weyl group $W_G$.
Let $T_M$ be the image of $iwTw^{-1} i^{-1}$ in $M_{\Qp}$.
The obvious isomorphism $T \isom T_M$ determines a length-preserving
injection of Weyl groups $W_M \hookrightarrow W_G$.  Let $W^M$ denote a set
of minimal length coset representatives of $W_M \backslash W_G$.

We have the following isomorphism in the derived category of
constructible sheaves on $S_M(K_M)$.
\begin{equation} \label{eq:kostant}
R \pi_* \iota^* L^G_{\mu} \cong \bigoplus_{w' \in W^M} L^M_{w^{-1}(w'(\mu+\rho)+\rho-2\rho_M)}[l(w')-\dim N]
\end{equation}
Here $l(w')$ denotes the length of $w'$.  To see that the splitting exists
in the derived category and not just at the level of cohomology, we observe
that the $L^M_{w^{-1} (w' (\mu+\rho) + \rho-2\rho_M)}$ have distinct central
characters.
\begin{rmk}
A derivation of the dual Kostant decomposition from the usual
Kostant decomposition is given in \cite[\S 1.4.1]{Urb11}.  However,
the final formula is
incorrect because the second equality in formula (7) is not true.
\end{rmk}

\subsection{Hecke action}

We will now define an action of $\HH_G$ on the cohomology of $S_M(K_M)$
by constructing a homomorphism $\HH_G \to \HH_M$.  The map
$R \pi_* \iota^*$ will be equivariant for this action.

As explained in \cite[Corollary 4.6.3]{Urb11}, for any summand of
\eqref{eq:kostant} with $w \ne w'$,
the Hecke eigenvalues of $u_t \in \HH'_G$ acting on the cohomology of
this summand
will be divisible by $N(\mu,t)$.
Since our goal is to prove a cuspidal analogue of Lemma \ref{lem:interp},
we may ignore these summands and just consider the one with $w = w'$.
We are therefore only interested in the local system
\[ L^M_{w^{-1} (w(\mu+\rho)+\rho-2\rho_M)} = L^M_{\mu+\rho+w^{-1}(\rho-2
\rho_M)} \,. \]

Our definition of the homomorphism $\HH_G \to \HH_M$ will be the same as that
of \cite[4.1.8]{Urb11}, except that
our convention for the Hecke operators makes some normalization factors
disappear.  The Hecke algebra $\HH_G$ is generated by operators of the form
$u_t$ for $t \in T^{-}$ and $[K_v g K_v]$ for $v \notin S$, $g \in G(\QQ_v)$.
Let $u_t \in \HH_G$ act as $t^{\rho+w^{-1}(\rho-2\rho_M)} u_t \in \HH_M$.
The double coset $K_v g K_v$ decomposes as a finite union
$\bigsqcup_j K_{M,v} p_j K_v$ with $p_j \in P(\QQ_v)$.
Let $[K_v g K_v]$ act as $\sum_j [K_{M,v} m_j K_{M,v}]$,
where $m_j$ is the image of $p_j$ in $M(\QQ_v)$.

\begin{lem}
The homomorphism $\HH_G \to \HH_M$ defined above makes the map
\[ R \pi_* \iota^* \colon H^{\bullet}(S_G(K), L^G_{\mu}) \to H^{\bullet}(S_M(K_M),L^M_{\mu+\rho+w^{-1}(\rho-2\rho_M)})[l(w)-\dim N]  \]
$\HH_G$-equivariant.
\end{lem}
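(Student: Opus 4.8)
The plan is to check $\HH_G$-equivariance of $R\pi_*\iota^*$ on a set of generators of $\HH_G$, namely the prime-to-$p$ double cosets $[K_v g K_v]$ with $v \notin S$ and the operators $u_t$ with $t \in T^-$. First I would recall that each of these is given by a Hecke correspondence, realized geometrically by a diagram of coverings $S_G(K'') \rightrightarrows S_G(K)$ for a suitable level $K''$; such a correspondence extends to the Borel--Serre compactifications and restricts to each boundary stratum, and in particular to $S_P(K_P)$, over which it lies above a correspondence on $S_M(K_M)$. Since $L^G_\mu$ and the $L^M_\bullet$ are defined as finite-dimensional quotients of the distribution modules of Section~\ref{sec:lss}, I would carry out the bookkeeping at the level of $\DD_\mu(g_i I, F)$ and its $M$-analogue, using the explicit formula for the $\Gamma_i$-action recalled there.

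For the prime-to-$p$ operators I expect the argument to be essentially formal. Using the Iwasawa decomposition $G(\QQ_v)=P(\QQ_v)K_v$ at the hyperspecial place $v$, I would write $K_v g K_v = \bigsqcup_j K_v p_j K_v$ with $p_j \in P(\QQ_v)$, so that the correspondence on $S_G(K)$ restricts along $\iota$ to the correspondence on $S_P(K_P)$ defined by the $p_j$, which in turn lies over the correspondence on $S_M(K_M)$ defined by the images $m_j \in M(\QQ_v)$. Because $\iota^*$ commutes with restriction of a correspondence to a stratum and $R\pi_*$ commutes with proper pushforward, the projection formula yields the desired compatibility, matching the definition $[K_v g K_v] \mapsto \sum_j [K_{M,v} m_j K_{M,v}]$.

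The real content is the operator $u_t$. I would restrict its correspondence $N_0^- t N_0^-$ at $p$ to the $P$-stratum, using the Iwahori factorization $I=N_0 T_0 N_0^-$ together with the compatible factorization $I_M=(N_0\cap M)\,T_0\,(N_0^-\cap M)$ of $I_M$, to compute the induced endomorphism of $\iota^*L^G_\mu$ and hence of $R\pi_*\iota^*L^G_\mu$. After applying the Kostant decomposition \eqref{eq:kostant} and discarding, as in \cite[Corollary 4.6.3]{Urb11}, the summands with $w' \ne w$ (whose $u_t$-eigenvalues are divisible by $N(\mu,t)$ and so do not affect the reduction mod $N(\mu,t)$), one is left with the summand $L^M_{w^{-1}(w(\mu+\rho)-\rho)}=L^M_{\mu+(1-w^{-1})\rho}$. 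The key point is then that $t$ acts on the nilmanifold fibre of $\pi$ by a scaling whose effect on the extremal weight line in $H^{\bullet}(\nn,V_\mu)$ realizing this summand differs from the action of the $M$-normalized operator $u_t$ by exactly the character $t \mapsto t^{(1-w^{-1})\rho}$: the $T$-weight of that line is $w^{-1}(w(\mu+\rho)-\rho)$, and the discrepancy between the restriction of a $\mu$-extremal vector of the $G$-representation underlying $L^G_\mu$ and the $M$-extremal vector of this weight is governed by $(1-w^{-1})\rho$. This is precisely the normalization $u_t \mapsto t^{(1-w^{-1})\rho}u_t$ in the definition of $\HH_G\to\HH_M$.

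The hard part will be this weight- and normalization-bookkeeping, which must be carried out consistently with the convention for $u_t$ fixed in Section~\ref{sec:hecke} (rather than the ``right $*$-action'' convention of \cite{Urb11}), so that all the $\rho$-twist factors cancel as claimed. The argument is otherwise parallel to \cite[\S 4.6]{Urb11}, the simpler Hecke convention adopted here being exactly what makes the extraneous normalization factors of \emph{loc.\ cit.}\ disappear.
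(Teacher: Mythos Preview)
Your outline follows the same line as the paper, which simply cites \cite[4.1.8, 4.6.1--3]{Urb11}: check equivariance on the generators $[K_v g K_v]$ and $u_t$, handle the unramified places via the Iwasawa decomposition $G(\QQ_v)=P(\QQ_v)K_v$, and for $u_t$ compute the action on the relevant piece of $H^{\bullet}(\nn,V_\mu)$ to identify the twist $t^{(1-w^{-1})\rho}$.

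One point needs correcting. You justify restricting attention to the $w'=w$ summand of the Kostant decomposition by saying that the other summands have $u_t$-eigenvalues divisible by $N(\mu,t)$ and therefore ``do not affect the reduction mod $N(\mu,t)$''. But the lemma is an \emph{exact} equivariance statement, not a congruence: the map in the statement is $R\pi_*\iota^*$ followed by projection onto the $w'=w$ summand, and that projection is Hecke-stable because the summands in \eqref{eq:kostant} have distinct central characters of $Z_M$ (this is exactly what the paper invokes to split the Kostant filtration in the derived category). The divisibility of the other eigenvalues by $N(\mu,t)$ is a separate fact, used downstream in Lemma~\ref{lem:eisprod1} and Proposition~\ref{prop:interpcusp}, not in the proof of this lemma. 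Once you replace that sentence with the central-character argument, the rest of your sketch is correct and is precisely what Urban's 4.6.1--4.6.3 do, transported to the Hecke convention of Section~\ref{sec:hecke}.
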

\begin{proof}
The argument is essentially the same as that of \cite[4.1.8, 4.6.1--3]{Urb11}.
\end{proof}

\subsection{Image of the map $R \pi_* \iota^*$} \label{sub:img}

To simplify some of the analysis that follows, we will observe that some
Levis have
$H^{\bullet}(S_M(K_M),L^M_{\mu})=0$ for a Zariski dense subset
of weights $\mu$, and hence they cannot contribute to the characteristic power
series.
The Levi $M$ can have a nonzero contribution only if the following conditions
hold (see \cite[Theorem 4.7.3(ii)$'$]{Urb11}):
\begin{enumerate}
\item $M(\RR)$ has discrete series. \label{disc}
\item The center $Z_M$ of $M$ is generated by its maximal split subgroup, its
maximal compact subgroup, and $Z_G$. \label{split}
\end{enumerate}
We will call $M$ \emph{relevant} if it satisfies the above two conditions.
We will say that a parabolic subgroup of $G$ is relevant if its
Levi quotient is relevant.

Now assume that $M$ is relevant.
We will define an involution
\[ \theta \colon X_*(T_M/Z_G) \to X_*(T_M/Z_G) \,. \]
To do this, we first decompose $X_*(T_M/Z_G) \otimes \QQ$ into several pieces.
We have
\[ X_*(T_M/Z_G) \otimes \QQ \cong \left( X_*(Z_M/Z_G) \oplus X_*(T_M \cap M^{\der}) \right) \otimes \QQ \,. \]
There is an action of $\Gal(\bar{\QQ}/\QQ)$ on $X_*(Z_M / Z_G)$.
This representation has open kernel, so it becomes semisimple
after tensoring with $\QQ$.
We define $\theta$ to be the operator that acts as $1$ on the isotypic
component of the trivial representation and as $-1$
on its orthogonal complement and on $X_*(T_M \cap M^{\der})$.
Although it is not immediately obvious that $\theta$
preserves the lattice $X_*(T_M/Z_G) \subset X_*(T_M/Z_G) \otimes \QQ$, the following alternative
description of $\theta$ will show that it does.

Let $T_M'$ be a maximal torus of $M_{\RR}$ that is compact modulo $(Z_M)_{\RR}$.
Such a torus exists by assumption \eqref{disc}.
If $C$ is an algebraically closed field equipped with inclusions
$\RR \hookrightarrow C$ and $\Qp \hookrightarrow C$,
then the tori $(T_M)_C$ and $(T_M')_C$ are conjugate in $M_C$.
Each way of expressing $(T_M')_C$ as a conjugate of $(T_M)_C$ determines
an isomorphism $X_*(T_M / Z_G) \simeq X_*(T_M' / Z_G)$.
We claim that under any such isomorphism, the action of complex conjugation on
$X_*(T_M' / Z_G)$ induces the involution $\theta$ on $X_*(T_M / Z_G)$.
Indeed, since $T_M'$ is compact modulo center, complex conjugation acts as $-1$
on $X_*(T_M' \cap M^{\der})$, and assumption \eqref{split} guarantees that
any element of $X_*(Z_M / Z_G)$ that is fixed by complex conjugation is
fixed by $\Gal(\bar{\QQ}/\QQ)$.

By \cite[\S 3.2]{LS04}, the image of
\[ R\pi_* \iota^* \colon H^{\bullet}(S_G(K),L^G_{\mu}) \to H^{\bullet}(S_M(K_M),L^{M}_{\mu+\rho+w^{-1}(\rho-2\rho_M)})[l(w)-\dim N]  \]
can have nonzero intersection with the cuspidal part
\[ H^{\bullet}_!(S_M(K_M),L^{M}_{\mu+\rho+w^{-1}(\rho-2\rho_M)})[l(w)-\dim N] \]
only if
\begin{equation} \label{eq:converge1}
\ang{\alpha^{\vee},w(1+\theta)w_0(\mu+\rho))} < 0 \quad \forall \alpha^{\vee} \in \Phi_G^{\vee +} \setminus \Phi_M^{\vee +} \,.
\end{equation}
Here $w_0$ is the longest element of $W_G$.
If the above equation holds and no Eisenstein series arising from $M$ has a
pole at $w_{0,M}w(\mu+\rho)$, then the image contains the cuspidal part.
In particular, the image contains the cuspidal part if \eqref{eq:converge1}
is satisfied and
\begin{equation} \label{eq:converge2}
\left|\ang{\alpha^{\vee},(1+\theta)w_0\mu}\right| \ge 4
|\ang{\alpha^{\vee},\rho}|
\quad \forall \alpha^{\vee} \in \Phi_G^{\vee} \setminus \Phi_M^{\vee} \,.
\end{equation}

The constraint \eqref{eq:converge1} is archimedean in nature, and therefore
appears to provide an obstacle to interpolating Eisenstein series
$p$-adically.  To get around this issue, we will combine contributions from
parabolic subgroups having common Levis.

Let $M$ be a Levi subgroup of $G_{\Qp}$ containing $T$, and
let $\mu$ be a character of $T$ satisfying \eqref{eq:converge2}.
Choosing a parabolic subgroup of $G_{\Qp}$ containing $M$ is equivalent
to choosing a set of positive coroots for $\Phi_G^{\vee} \backslash \Phi_M^{\vee}$.  Hence exactly one of these parabolic subgroups will satisfy \eqref{eq:converge1}.  Call this parabolic $P_{\mu}$.

At the end of section \ref{sub:bsbdy}, we associated each parabolic subgroup of $G$ with
an element of $W_G$; this association determines a map $w \colon \PP \to W_G$.

\begin{lem} \label{lem:nolamdep}~
\[  l(w(P_{\mu})) = \frac{1}{2} \left| (\Phi_G^- \setminus \Phi_M^-) \cap \theta(\Phi_G^+ \setminus \Phi_M^+) \right| \]
\[ (1-\theta)(\rho+w(P_{\mu})^{-1}(\rho-2\rho_M)) = \sum_{\alpha \in (\Phi_G^+ \setminus \Phi_M^+) \cup \theta(\Phi_G^- \setminus \Phi_M^-) } \alpha \]
In particular, $l(w(P_{\mu}))$ and $(1-\theta)\left(\wtoff\right)$ do not depend on $\mu$.
\end{lem}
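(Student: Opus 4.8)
The plan is to compute both quantities directly in terms of the combinatorial data attached to $P_\mu$, using the defining property of $P_\mu$ and the description of $w(P_\mu)$ from section \ref{sub:bsbdy}. Recall that $w = w(P_\mu) \in W^M$ is the minimal-length representative making $w^{-1} B_M w \subseteq P$ (after the identifications of section \ref{sub:bsbdy}); concretely, $w$ is characterized by the fact that $w^{-1}$ sends $\Phi_M^+$ into $\Phi_G^+$, and $\Phi_G^+ \setminus \Phi_M^+ \supseteq w^{-1}\bigl((\Phi_G^+ \setminus \Phi_M^+) \cap w\Phi_G^-\bigr)$, so that the "inversion set" $\Phi_w := \Phi_G^+ \cap w^{-1}\Phi_G^-$ is contained in $\Phi_G^+ \setminus \Phi_M^+$ and has cardinality $l(w)$. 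The standard identity $(1 - w^{-1})\rho = \sum_{\alpha \in \Phi_w} \alpha$ (valid for any Weyl group element, since $w^{-1}\rho = \rho - \sum_{\alpha \in \Phi_w}\alpha$) will be the other main input.

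First I would translate the defining condition of $P_\mu$ into a statement about $w$. The parabolic $P_\mu$ is cut out by the coroots $\alpha^\vee$ with $\ang{\alpha^\vee, (1+\theta)\mu} < 0$; combined with \eqref{eq:converge1}, which says $\ang{\alpha^\vee, w(1+\theta)(\mu+\rho)} < 0$ for all $\alpha^\vee \in \Phi_G^{\vee+} \setminus \Phi_M^{\vee+}$, and using \eqref{eq:converge2} to guarantee that the $\rho$-shift does not change any signs, I would deduce that for $\alpha \in \Phi_G^+ \setminus \Phi_M^+$ we have $w^{-1}\alpha \in \Phi_G^-$ precisely when $\ang{\alpha^\vee, \theta \cdot (\text{something})} $ forces it — more precisely, that $\Phi_w = \Phi_G^+ \cap w^{-1}\Phi_G^-$ consists exactly of those $\alpha \in \Phi_G^+ \setminus \Phi_M^+$ with $\theta\alpha \in \Phi_G^- \setminus \Phi_M^-$. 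Since $\theta$ is an involution preserving $\Phi_M$ (it is $\pm 1$ on the relevant pieces and $M$ is $\theta$-stable), the set $\{\alpha \in \Phi_G^+ \setminus \Phi_M^+ : \theta\alpha \in \Phi_G^- \setminus \Phi_M^-\}$ is carried bijectively by $\theta$ onto $(\Phi_G^- \setminus \Phi_M^-) \cap \theta(\Phi_G^+ \setminus \Phi_M^+)$, which immediately gives the length formula $l(w(P_\mu)) = |\Phi_w| = \frac{1}{2}|(\Phi_G^- \setminus \Phi_M^-) \cap \theta(\Phi_G^+ \setminus \Phi_M^+)|$, the factor $\frac12$ accounting for the fact that $\alpha \mapsto \theta\alpha$ pairs up $\Phi_w$ with its image.

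For the second formula I would start from $(1 - w^{-1})\rho = \sum_{\alpha \in \Phi_w}\alpha$ and apply $(1-\theta)$. Write $S := (\Phi_G^- \setminus \Phi_M^-) \cap \theta(\Phi_G^+ \setminus \Phi_M^+)$; by the previous paragraph $\theta$ maps $\Phi_w$ bijectively onto $S$ and (being an involution) $S$ back onto $\Phi_w$. Then $(1-\theta)\sum_{\alpha \in \Phi_w}\alpha = \sum_{\alpha \in \Phi_w}\alpha - \sum_{\alpha \in \Phi_w}\theta\alpha = \sum_{\alpha \in \Phi_w}\alpha - \sum_{\beta \in S}\beta$. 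Now $\Phi_w \subseteq \Phi_G^+ \setminus \Phi_M^+$ and $S \subseteq \Phi_G^- \setminus \Phi_M^-$, and in fact $\Phi_w = -\theta S$; one checks $\sum_{\alpha \in \Phi_w}\alpha - \sum_{\beta \in S}\beta = \sum_{\beta \in S}(-\theta\beta) - \sum_{\beta \in S}\beta = -\sum_{\beta\in S}(1+\theta)\beta$. A short manipulation — or, more cleanly, observing that $S = \theta(\Phi_w)$ with $\Phi_w \cap S = \emptyset$ since one lies in positives-minus-$\Phi_M$ and the other in negatives-minus-$\Phi_M$ — rewrites this as $\sum_{\alpha \in S}\alpha$ after accounting for signs, yielding $(1-\theta)(1-w^{-1})\rho = \sum_{\alpha \in S}\alpha$ as claimed. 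The $\mu$-independence is then immediate since $S$ depends only on $M$, $\theta$, and the root system.

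The main obstacle I anticipate is the first step: rigorously extracting from \eqref{eq:converge1}, \eqref{eq:converge2}, and the minimal-length characterization of $w(P_\mu)$ the precise identification $\Phi_w = \{\alpha \in \Phi_G^+ \setminus \Phi_M^+ : \theta\alpha \in \Phi_G^-\}$. This requires carefully tracking how the isomorphism $X_*(T_M/Z_G) \simeq X_*(T_M'/Z_G)$ and the resulting description of $\theta$ interact with the choice of $w$ and with the sign conditions, and making sure the $\rho$-shifts genuinely never flip a sign under the regularity hypothesis \eqref{eq:converge2} (this is where the factor $4$ in \eqref{eq:converge2} is used — it must dominate $2\ang{\alpha^\vee,\rho}$ plus the contribution from $(1-w^{-1})\rho$). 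Once that combinatorial dictionary is set up, both displayed identities are formal consequences of the standard inversion-set description of $(1-w^{-1})\rho$ together with the involutivity of $\theta$.
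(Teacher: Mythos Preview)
There is a genuine error in your identification of the inversion set. You claim that
\[
\Phi_w \;=\; \{\alpha \in \Phi_G^+ \setminus \Phi_M^+ : \theta\alpha \in \Phi_G^- \setminus \Phi_M^-\},
\]
but this set actually has cardinality $2\,l(w)$, not $l(w)$. Indeed, write $A$ for the set of $\alpha \in \Phi_G \setminus \Phi_M$ with $\ang{\alpha^\vee,\mu}<0<\ang{\alpha^\vee,(1+\theta)\mu}$; this is the genuine length-$l(w)$ set attached to $P_\mu$. The set you wrote down (after a harmless sign flip) is $B = \{\alpha \in \Phi_G \setminus \Phi_M : \ang{\alpha^\vee,\mu}<0<\ang{\alpha^\vee,\theta\mu}\}$, which is strictly larger: the involution $\alpha \mapsto -\theta\alpha$ preserves $B$ and is fixed-point-free on it (by \eqref{eq:converge2}), and one checks that each orbit $\{\alpha,-\theta\alpha\}$ in $B$ meets $A$ in exactly one element. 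Thus $|B| = 2|A| = 2\,l(w)$, which is exactly the paper's argument and the source of the factor $\tfrac12$. Your hand-wave ``the factor $\tfrac12$ accounting for the fact that $\alpha \mapsto \theta\alpha$ pairs up $\Phi_w$ with its image'' does not fix this: if $\theta$ gave a bijection $\Phi_w \to S$ as you say, that would force $|\Phi_w| = |S|$, contradicting the formula.

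The same misidentification contaminates your second computation. Starting from the wrong $\Phi_w$ you arrive at $-\sum_{\beta \in S}(1+\theta)\beta$ and then assert this ``rewrites as $\sum_{\alpha \in S}\alpha$ after accounting for signs,'' which is neither true nor justified. The correct route is to run the same pairing $\alpha \leftrightarrow -\theta\alpha$ on $B$: since $(1-\theta)\alpha = (1-\theta)(-\theta\alpha)$ for every $\alpha$, the sum $\sum_{\alpha \in A}(1-\theta)\alpha$ equals $\tfrac12\sum_{\alpha \in B}(1-\theta)\alpha$, and one then identifies this with $\sum_{\alpha \in S}\alpha$ directly. In short, the key combinatorial input is not a closed-form description of $\Phi_w$ (which genuinely depends on $\mu$) but rather the two-to-one map $B \to A$ given by the $-\theta$-pairing, which makes the $\mu$-dependence disappear.
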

\begin{proof}
By definition,
\begin{align*}
l(w(P_{\mu})) & = \left| \left\{ \alpha \in \Phi_G \setminus \Phi_M \middle| \ang{\alpha^{\vee},(1+\theta)w_0\mu}<0, \ang{\alpha^{\vee},\mu} < 0 \right\} \right| \\
& = \left| \left\{ \alpha \in \Phi_G \setminus \Phi_M \middle| \ang{\alpha^{\vee},(1+\theta)w_0\mu}<0, \ang{\alpha^{\vee},w_0 \mu} > 0 \right\} \right|
\,.
\end{align*}
Observe that if $\ang{\alpha^{\vee},w_0\mu}<0<\ang{\alpha^{\vee},\theta w_0\mu}$,
then exactly one of the inequalities
\[ \ang{\alpha^{\vee},w_0\mu}<0<\ang{\alpha^{\vee},(1+\theta)w_0\mu}, \quad
\ang{(-\alpha^{\vee} \theta),w_0\mu}<0<\ang{(-\alpha^{\vee} \theta),(1+\theta)w_0\mu} \]
will be satisfied, and otherwise neither will be satisfied.
So
\[ \left| \left\{ \alpha \in \Phi_G \setminus \Phi_M \middle| \ang{\alpha^{\vee},w_0 \mu}<0<\ang{\alpha^{\vee},\theta w_0 \mu} \right\} \right| =2 l(w(P_{\mu})) \,. \]
This proves the first item.
Similarly,
$\rho + w(P_{\mu})^{-1} (\rho-2\rho_M)$ is the sum over
those $\alpha \in \Phi_G \setminus \Phi_M$
satisfying
\[ \left<\alpha^{\vee},w_0 \mu\right> < 0,\quad \left<\alpha^{\vee},(1+\theta)w_0 \mu \right> < 0 \,, \]
and $-\theta\left(\rho + w(P_{\mu})^{-1} (\rho-2\rho_M)\right)$ is the
sum of the $\alpha$ satisfying
\[ \left<\alpha^{\vee},\theta w_0 \mu\right> > 0,\quad \left<\alpha^{\vee},(1+\theta)w_0 \mu \right> > 0 \,. \]
Observe that
$\alpha$ appears in the overall sum if and only if
$\left< \alpha^{\vee},w_0 \mu\right> < 0$ or $\left< \alpha^{\vee},\theta w_0 \mu\right> > 0$.
This is equivalent to $\left< \alpha^{\vee},\mu\right> > 0$ or $\left< \alpha^{\vee},\theta \mu\right> < 0$.
\end{proof}
We will write $l(M)$ for $l(w(P_{\mu}))$ and $\rho(M,\mu)$ for
$\wtoff$.

Consider the set of Levi subgroups $M_p$ of $G_{\Qp}$
containing $T$ that are conjugate to the base change of a relevant Levi
subgroup of $G$.  For such an $M_p$, fix a relevant Levi
$M$ and $g \in G(\Qp)$ satisfying
$M_p = gM_{\Qp}g^{-1}$.
Each weight $\mu$ satisfying \eqref{eq:converge2} determines a parabolic
subgroup of $G_{\Qp}$ containing $M_p$, and a corresponding parabolic
subgroup $P_{\mu}$ of $G$ containing $M$.
Let $\pi_{\mu}$ denote the projection $P_{\mu} \to M$, and define
\[ \KK_{M,\mu} \colonequals \left\{ \pi_{\mu}(xK^px^{-1} \cap P_{\mu}(\Afp)) \cdot I_{M_{\Qp}} \middle|
x \in P_{\mu}(\Afp) \backslash G(\Afp)/K^p \right\} \,. \]
Let $\mathcal{J}_{\mu}$ denote the set of data $(M_p,M,g,K_M)$ with
$(M_p,M,g)$
as above and $K \in \KK_{M,\mu}$.
(We emphasize that for each $M_p$, we include only a single fixed $M$ and $g$,
not all possible $M$ and $g$.)
For ease of notation, we will just
write the data as $(M,K_M)$ below, leaving $M_p$ and $g$ implicit.

Now we are almost ready to write down an analogue of Lemma \ref{lem:interp}
for cusp forms.
The analysis of the last few sections gives us the following identity.
\begin{lem}~ \label{lem:eisprod1}
For any dominant algebraic weight $\mu \colon T \to F^{\times}$
satisfying \eqref{eq:converge2},
\[
\begin{split}
& \frac{\det(1-Xf|H^{\bullet}(S_G(K),L^G_{\mu}))}{\det(1-Xf|H^{\bullet}_!(S_G(K),L^G_{\mu}))} \\
\equiv & \prod_{(M,K_M) \in \mathcal{J}_{\mu}} \det(1-Xf|H^{\bullet}_!(S_M(K_M),L^M_{\mu+\rho(M,\mu)}))^{(-1)^{\dim N-l(M)}} \pmod {\OO_{F} \db{N(\mu,t)X}}
\end{split}
\]
\end{lem}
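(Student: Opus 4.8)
The plan is to identify the left-hand side with the characteristic power series of $f$ acting on the image of the restriction map to the Borel--Serre boundary, and then to evaluate that image stratum by stratum using the computations of this section.

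First, since $H^\bullet(S_G(K),L^G_\mu)\cong H^\bullet(\overline{S_G(K)},L^G_\mu)$, the long exact sequence of the pair $(\overline{S_G(K)},\partial S_G(K))$ identifies $H^\bullet_!(S_G(K),L^G_\mu)=\mathrm{im}(H^\bullet_c\to H^\bullet)$ with the kernel of the restriction map $\mathrm{res}\colon H^\bullet(S_G(K),L^G_\mu)\to H^\bullet(\partial S_G(K),L^G_\mu|_{\partial S_G(K)})$. Since $\det(1-Xf\mid\cdot)$ is multiplicative in short exact sequences of $\HH_G$-modules that are finitely generated over $\OO_F$ (and a shift of cohomological degree inverts it), the left-hand side of the asserted congruence equals $\det(1-Xf\mid Q^\bullet)$, where $Q^\bullet\colonequals\mathrm{im}(\mathrm{res})$.

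Next I would compute the boundary cohomology. Stratifying $\partial S_G(K)$ by the locally closed pieces $S_P(K_P)$ and iterating excision gives, in the Grothendieck group of $\HH_G$-modules, $[H^\bullet(\partial S_G(K),L^G_\mu|_{\partial S_G(K)})]=\sum_P[H^\bullet_c(S_P(K_P),\iota^*L^G_\mu)]$ (using that $\partial S_G(K)$ is compact), the sum over proper parabolics up to the relevant conjugacy; by the discussion of section \ref{sub:img} only the relevant $P$ contribute a nonzero term modulo $\OO_F\db{N(\mu,t)X}$. Pushing forward along the proper nilmanifold fibration $\pi\colon S_P(K_P)\to S_M(K_M)$ and invoking the Kostant decomposition \eqref{eq:kostant} rewrites each such term as $\sum_{w'\in W^M}(-1)^{l(w')-\dim N}[H^\bullet(S_M(K_M),L^M_{w^{-1}(w'(\mu+\rho)-\rho)})]$ (passing between $H^\bullet$ and $H^\bullet_c$ on $S_M(K_M)$ by Poincar\'e duality as in \cite{Urb11}). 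By \cite[Corollary 4.6.3]{Urb11} the operator $u_t$ acts on every summand with $w'\ne w$ through eigenvalues divisible by $N(\mu,t)$, so modulo $\OO_F\db{N(\mu,t)X}$ only the summand $w'=w$ survives; by Lemma \ref{lem:nolamdep} its weight is $\mu+(1-w^{-1})\rho=\mu+\rho(M,\mu)$ and its degree shift is $l(w)-\dim N=l(M)-\dim N$.

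It remains to intersect with $Q^\bullet$. Here I would apply \cite[\S 3.2]{LS04}: under the hypothesis \eqref{eq:converge2} there is, for each relevant Levi $M$, exactly one parabolic $P_\mu\supseteq M$ for which \eqref{eq:converge1} holds, and for it the image of $R\pi_*\iota^*$ contains the entire cuspidal subspace $H^\bullet_!(S_M(K_M),L^M_{\mu+\rho(M,\mu)})$, whereas for the remaining parabolics over $M$ the surviving summand meets the cuspidal part of $S_M(K_M)$ trivially; once the non-cuspidal boundary contributions cancel in the Grothendieck group, the net contribution of $M$ to $\det(1-Xf\mid Q^\bullet)$ is $\det(1-Xf\mid H^\bullet_!(S_M(K_M),L^M_{\mu+\rho(M,\mu)}))^{(-1)^{\dim N-l(M)}}$, appearing once for each $K_M\in\KK^p_{M,\mu}$ coming from a preimage of $M$ in $\PP_{0,\mu}$. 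Collecting these factors over $M\in\MM$ gives the stated congruence. The main obstacle is precisely this last step: identifying which isotypic components of the boundary cohomology actually lie in $Q^\bullet$ and verifying that the non-cuspidal pieces cancel. This is exactly where Urban's original argument breaks down, since the region of convergence of an Eisenstein series is not a union of Weyl chambers, and the device that repairs it is to group the parabolics by their common Levi: Lemma \ref{lem:nolamdep} then renders the surviving weight $\mu+\rho(M,\mu)$ and shift $l(M)$ independent of $\mu$, which is what makes the $p$-adic interpolation in section \ref{sec:eigenvariety} possible.
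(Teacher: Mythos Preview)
Your outline is essentially the argument the paper has in mind: the paper's own ``proof'' is a single sentence (``The analysis of the last few sections gives us the following identity''), and your steps---identify the ratio with the restriction image, stratify the boundary, apply the Kostant decomposition \eqref{eq:kostant}, kill the $w'\ne w$ summands via \cite[Corollary 4.6.3]{Urb11}, and invoke \cite[\S 3.2]{LS04} for the cuspidal image---are exactly that analysis spelled out.

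There is, however, a confusion in your final paragraph about where Urban's error lies and what fixes it. Lemma~\ref{lem:eisprod1} is a statement at a \emph{fixed} weight $\mu$: the right-hand side explicitly depends on $\mu$ through $\KK^p_{M,\mu}$ and $\rho(M,\mu)$, and that is perfectly fine. The region-of-convergence issue you mention is not an obstruction here; the formula at a fixed weight already follows from \cite{LS04} and \cite{Urb11}. Urban's error concerns the \emph{interpolation} of these formulas as $\mu$ varies, and its repair---grouping parabolics by their common Levi and invoking Lemma~\ref{lem:nolamdep} to show that $l(M)$ and $(1-\theta)\rho(M,\mu)$ are independent of $\mu$---belongs to Proposition~\ref{prop:eisprod2}, not to the present lemma. (Note also that the identity $\mu+(1-w^{-1})\rho=\mu+\rho(M,\mu)$ you attribute to Lemma~\ref{lem:nolamdep} is simply the definition of $\rho(M,\mu)$.)

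The one genuine technical point you flag but do not resolve---that the non-cuspidal boundary pieces cancel in the Grothendieck group---is handled by the inductive structure of the Franke/Langlands--Schwermer decomposition implicit in \cite{LS04} and \cite[\S 4.6]{Urb11}: the Eisenstein part of $H^\bullet(S_M(K_M),-)$ is itself built from cuspidal cohomology of smaller Levis, which appear directly as deeper strata of $\partial S_G(K)$, and the signs match so that these contributions telescope. The paper does not spell this out either, so your level of detail is consistent with the source; just remove the misplaced appeal to Lemma~\ref{lem:nolamdep} and the discussion of Urban's error from this proof.
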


In order to interpolate the local systems $p$-adically, we need to replace
$\MM_{\mu}$ and $\rho(M,\mu)$ with something independent of $\mu$.
\begin{prop} \label{prop:eisprod2}
For any dominant algebraic weights $\mu \colon T \to F^{\times}$ and
$\mu_0 \colon T \to F_0^{\times}$ satisfying \eqref{eq:converge2},
\[ \begin{split}
& \frac{\det(1-Xf|H^{\bullet}(S_G(K),L^G_{\mu}))}{\det(1-Xf|H^{\bullet}_!(S_G(K),L^G_{\mu}))} \\
\equiv &
\prod_{(M,K_M) \in \mathcal{J}_{\mu_0}}
\det(1-Xf|H^{\bullet}_!(S_M(K_M),L^M_{\mu+\rho(M,\mu_0)}))^{(-1)^{\dim N-l(M)}} \pmod {\OO_{F} \db{N(\mu,t)X}} \,.
\end{split} \]
\end{prop}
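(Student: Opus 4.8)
The plan is to deduce Proposition \ref{prop:eisprod2} from Lemma \ref{lem:eisprod1} by showing that the two right-hand sides agree modulo $\OO_F \db{N(\mu,t)X}$, even though the set of open compact subgroups $\KK^p_{M,\mu}$ and the twist $\rho(M,\mu)$ appearing in Lemma \ref{lem:eisprod1} genuinely depend on $\mu$ through the choice of parabolic $P_\mu$. The key point is that the dependence on $\mu$ enters only through $w(P_\mu) \in W_G$, and Lemma \ref{lem:nolamdep} already tells us that $l(M) = l(w(P_\mu))$ is independent of $\mu$, while $(1-\theta)(1-w(P_\mu)^{-1})\rho$ is independent of $\mu$; so the twist $\rho(M,\mu)$ only varies within the $+1$-eigenspace of $\theta$, i.e.\ $\rho(M,\mu) - \rho(M,\mu_0) \in X^*(Z_M/Z_G)^{\theta=1} \otimes \QQ$, which pairs trivially with all coroots in $\Phi_M^\vee$.

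First I would fix $M \in \MM$ and compare the inner products over $K_M \in \KK^p_{M,\mu}$ versus $K_M \in \KK^p_{M,\mu_0}$. Different choices of preimage $P$ of $M$ in $\PP_{0,\mu}$ versus $\PP_{0,\mu_0}$ correspond to conjugating $P(\Af)$ by elements of $K^pI$; since $M$ is fixed, any two such parabolics containing $M$ are conjugate by an element normalizing $M$, and this conjugation induces a bijection $\KK^p_{M,\mu} \to \KK^p_{M,\mu_0}$ together with an isomorphism of the corresponding locally symmetric spaces $S_M(K_M)$ that is compatible with the $\HH_G$-action defined via $\HH_G \to \HH_M$. (One must check the Hecke compatibility: the normalization factor $t^{(1-w^{-1})\rho}$ in the definition of the map $\HH_G \to \HH_M$ changes when $w(P_\mu)$ is replaced by $w(P_{\mu_0})$, but the discrepancy $t^{(w(P_{\mu_0})^{-1} - w(P_\mu)^{-1})\rho}$ is, after applying the character through which $u_t$ acts, a $p$-adic unit up to a factor that is bounded by $N(\mu,t)$ — this is exactly the kind of estimate already used in Corollary 4.6.3 of \cite{Urb11} and in Lemma \ref{lem:interp}.)

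Next I would handle the change of twist $\rho(M,\mu) \to \rho(M,\mu_0)$ in the local system $L^M_{\mu + \rho(M,\mu)}$. Because $\rho(M,\mu) - \rho(M,\mu_0)$ lies in $X^*(Z_M/Z_G)\otimes\QQ$ and is fixed by $\theta$, it is fixed by $\Gal(\bar\QQ/\QQ)$ by assumption \eqref{split}, hence (after clearing denominators, which only changes things by a bounded amount) defines a genuine character of $Z_M$ trivial on $Z_G$ and on $M^{\der}$. Twisting $L^M_{\mu+\rho(M,\mu_0)}$ by such a central character changes $H^\bullet_!$ only by tensoring with a one-dimensional space on which the Hecke operators act by an explicit character; and the discrepancy between the $u_t$-eigenvalues before and after is controlled by $|t^{(1-\theta)(w(P_{\mu_0})^{-1}-w(P_\mu)^{-1})\rho}|_p$, which Lemma \ref{lem:nolamdep} shows is actually $1$ since that exponent vanishes. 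So modulo $\OO_F\db{N(\mu,t)X}$ the two products agree term by term.

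The main obstacle I expect is bookkeeping the interaction between the two $\mu$-dependent objects simultaneously and making the Hecke-equivariance of the comparison isomorphism precise: one is really comparing cohomology of \emph{different} arithmetic manifolds (the $K_M$ differ) carrying \emph{different} local systems, under \emph{different} normalizations of the map $\HH_G \to \HH_M$, and one must verify that all three discrepancies combine into something congruent to $0$ modulo $N(\mu,t)X$ rather than merely each being small. Concretely, the cleanest route is probably to show that for each $M$ the entire inner product $\prod_{K_M} \det(1-Xf \mid H^\bullet_!(S_M(K_M), L^M_{\mu+\rho(M,\mu)}))$ computes, up to $\OO_F\db{N(\mu,t)X}$, the contribution of $M$-Eisenstein series to $\det(1-Xf\mid H^\bullet(S_G(K),L^G_\mu))$, and that this contribution is visibly symmetric under replacing $\mu$ by $\mu_0$ in the choice of parabolic because summing over \emph{all} parabolics with Levi $M$ (weighted by the sign $(-1)^{\dim N - l(M)}$) is what actually appears in the boundary cohomology — the choice of $P_\mu$ was only a device to pick out a single nonvanishing summand of the Kostant decomposition \eqref{eq:kostant}, and the other summands are killed mod $N(\mu,t)X$ regardless of which $\mu$ we used to select $P_\mu$. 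I would then invoke Lemma \ref{lem:interp} and the density statement following it to conclude.
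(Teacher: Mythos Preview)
Your handling of the local system twist is essentially the paper's argument, though the paper phrases it more sharply: since $(1-\theta)(\rho(M,\mu)-\rho(M,\mu_0))=0$ by Lemma \ref{lem:nolamdep} and $\tfrac{1-\theta}{2}$ acts as the identity on $X^*(T_M\cap M^{\der})$, the weights $\mu+\rho(M,\mu)$ and $\mu+\rho(M,\mu_0)$ restrict to the \emph{same} character of $T_M\cap M^{\der}$, so the local systems are literally isomorphic.  The $\HH_M$-actions differ by the central twist $t\mapsto t^{\rho(M,\mu)-\rho(M,\mu_0)}$, but the two normalizations of $\HH_G\to\HH_M$ differ by exactly the inverse factor, so the $\HH_G$-actions agree on the nose.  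No estimate modulo $N(\mu,t)$ is needed here, contrary to what you suggest.

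The real gap is in your treatment of $\KK^p_{M,\mu}$ versus $\KK^p_{M,\mu_0}$.  You propose to conjugate $P_\mu$ to $P_{\mu_0}$ by some $n\in N_G(M)$ and transport everything along.  But $n$ lies in $N_G(M)\setminus M$ (it represents a nontrivial element of the relative Weyl group $N_G(M)/M$), so conjugation by $n$ acts on $M$ by a nontrivial automorphism.  This automorphism permutes the representations of $M$ and twists the local systems by a nontrivial Weyl element; it does \emph{not} produce an isomorphism $H^\bullet_!(S_M(K_M),L^M_\nu)\cong H^\bullet_!(S_M(nK_Mn^{-1}),L^M_\nu)$ that is equivariant for the given $\HH_G$-actions.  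Your fallback remark that ``summing over all parabolics with Levi $M$'' should make the dependence disappear is the right instinct, but you give no mechanism for it.

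The paper supplies that mechanism via representation theory rather than geometry.  One identifies
\[
\sum_{K^p_M\in\KK^p_{M,\mu}}\tr(\mathbf{1}_{K^p_M}\mid\pi^p_f)\;=\;\tr\bigl(\mathbf{1}_{K^p}\mid \Ind_{P_\mu(\Afp)}^{G(\Afp)}\pi^p_f\bigr)
\]
and then invokes \cite[2.9--2.10]{BZ77}: for associate parabolics (same Levi, possibly different unipotent radical) the parabolically induced representations have the same composition factors at each place, hence the same trace against $\mathbf{1}_{K^p}$.  This Bernstein--Zelevinsky input is the missing idea; without it your conjugation argument does not close.
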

\begin{proof}
We claim that local systems $L^M_{\mu+\rho(M,\mu)}$,
$L^M_{\mu+\rho(M,\mu_0)}$ are
isomorphic.  The isomorphism class of each local system depends only the
restriction of the weight to $M^{\der}$.  The operator $\frac{1-\theta}{2}$
acts as the identity on the character lattice of $M^{\der}$,
so the claim follows from Lemma \ref{lem:nolamdep}.
Furthermore, the isomorphism of local systems induces an $\HH_G$-equivariant isomorphism
on cohomology.  (The isomorphism on cohomology is not $\HH_M$-equivariant---the
actions of $u_t$ differ by a factor of $t^{\rho(M,\mu)-\rho(M,\mu_0)}$.
However, the two homomorphisms $\HH_G \to \HH_M$
also differ by the same factor,
and so the differences cancel each other.)

It remains to explain why can replace $\KK_{M,\mu}$ with
$\KK_{M,\mu_0}$.  Essentially, we need to show that if
$\pi = \pi_{\infty} \otimes \pi_p \otimes \pi^p_f$ is an automorphic
representation of $M$, then
\[ \sum_{K_M \in \KK_{M,\mu}} \tr (\mathbf{1}_{K^p_M}|\pi^p_f) = \tr (\mathbf{1}_{K^p}|\Ind_{P_{\mu}(\Afp)}^{G(\Afp)} \pi^p_f) \]
is independent of $\mu$.  By \cite[2.9--2.10]{BZ77}, for any
place $v$, the composition series of the local factor of
$\Ind_{P_{\mu}(\Af)}^{G(\Af)} \pi^p_f$ at $v$ is independent of $\mu$.
It follows that the trace of $\mathbf{1}_{K^p}$ does not depend on $\mu$.
\end{proof}

\subsection{The complex $C^{\bullet}_{G,K^p,\lambda,\cusp}$}
\label{sec:auto-cuspidal}

Now we fix an algebraic dominant weight $\mu_0$, and let
$\lambda \colon T \to A^{\times}$ be any weight.
We define $C^{\bullet}_{G,K^p,\lambda,\cusp}$ inductively, assuming
that analogous complexes have already been defined for $M \in \MM$.
\[
C^{\bullet}_{G,K^p,\lambda,\cusp} \colonequals C^{\bullet}_{G,K^p,\lambda} \oplus \bigoplus_{(M,K_M) \in \mathcal{J}_{\mu_0}} C^{\bullet}_{M,K_M,\lambda+\rho(M,\mu_0),\cusp} [l(M)-\dim N-1]
\]

\begin{prop} \label{prop:interpcusp}
Let $F$ be a finite extension of $\QQ_p$, let
$\mu \colon T \to F^{\times}$ be an algebraic dominant weight, and let
$f=u_t \otimes f^p \in \HH_G'$.  If $\mu$ is sufficiently general, then
\[ \det(1-Xf|C^{\bullet}_{G,K^p,\mu,\cusp}) \equiv \det(1-Xf|H^{\bullet}_{!}(S_G(K),L^G_{\mu})) \pmod {\OO_F \db{N(\mu,t)X}} \,. \]
\end{prop}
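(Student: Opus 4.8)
The plan is to prove the statement by induction on $\dim G$, unwinding the recursive definition of $C^{\bullet}_{G,K^p,\mu,\cusp}$. Since every $M \in \MM$ is a \emph{proper} Levi subgroup of $G$, it has strictly smaller dimension, so the induction is well founded; the base case, in which $G$ has no relevant proper parabolic subgroup (so that $C^{\bullet}_{G,K^p,\mu,\cusp} = C^{\bullet}_{G,K^p,\mu}$), is subsumed in the inductive step below with all products over $\MM$ empty.

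For the inductive step, I would first expand $\det(1-Xf \mid C^{\bullet}_{G,K^p,\mu,\cusp})$ using the definition of $C^{\bullet}_{G,K^p,\mu,\cusp}$ together with the multiplicativity of $\det(1-Xf \mid -)$ under direct sums and the identity $\det(1-Xf \mid E^{\bullet}[j]) = \det(1-Xf \mid E^{\bullet})^{(-1)^{j}}$ for a shifted complex. Since $(-1)^{l(M)-\dim N - 1} = -(-1)^{\dim N - l(M)}$, this writes $\det(1-Xf \mid C^{\bullet}_{G,K^p,\mu,\cusp})$ as the product of $\det(1-Xf \mid C^{\bullet}_{G,K^p,\mu})$ with $\prod_{M \in \MM}\prod_{K^p_M \in \KK^p_{M,\mu_0}} \det(1-Xf \mid C^{\bullet}_{M,K^p_M,\mu+\rho(M,\mu_0),\cusp})^{-(-1)^{\dim N - l(M)}}$. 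I would then apply Lemma \ref{lem:interp} to replace the first factor by $\det(1-Xf \mid H^{\bullet}(S_G(K),L^G_{\mu}))$ modulo $\OO_F\db{N(\mu,t)X}$; apply the inductive hypothesis (with the action of $f \in \HH_G$ on $M$-cohomology taken through the homomorphism $\HH_G \to \HH_M$ of \S\ref{sub:bsbdy}, for which $R\pi_*\iota^*$ is equivariant) to replace each $C^{\bullet}_{M,K^p_M,\mu+\rho(M,\mu_0),\cusp}$ factor by $\det(1-Xf \mid H^{\bullet}_!(S_M(K_M),L^M_{\mu+\rho(M,\mu_0)}))$; and finally apply Proposition \ref{prop:eisprod2}, with the same fixed $\mu_0$, to rewrite $\det(1-Xf \mid H^{\bullet}(S_G(K),L^G_{\mu}))$ as $\det(1-Xf \mid H^{\bullet}_!(S_G(K),L^G_{\mu}))$ times precisely the product over the same index set $\{(M,K^p_M)\}$ of the factors $\det(1-Xf \mid H^{\bullet}_!(S_M(K_M),L^M_{\mu+\rho(M,\mu_0)}))$, now carrying the opposite exponent $(-1)^{\dim N - l(M)}$. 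The two families of boundary factors cancel, and what remains is the asserted congruence.

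The delicate point, and what the hypothesis ``$\mu$ sufficiently general'' must be chosen to encode, is the compatibility of congruence moduli under the induction. Lemma \ref{lem:interp} and Proposition \ref{prop:eisprod2} both produce congruences modulo $\OO_F\db{N(\mu,t)X}$, but the inductive hypothesis applied to $M$ a priori only gives a congruence modulo $\OO_F\db{N_M X}$, where $N_M$ is the analogue for $M$ of $N(\mu+\rho(M,\mu_0),t_M)$ and $t_M$ is the image of $t$ under $\HH_G \to \HH_M$ (which differs from $u_t$ by the scalar $t^{(1-w(P_{\mu})^{-1})\rho}$). I would therefore take ``sufficiently general'' to require that $\mu$ be regular dominant, satisfy \eqref{eq:converge2} (which also rules out poles of the relevant Eisenstein series and makes Proposition \ref{prop:eisprod2} available), have the property that the local systems occurring in \eqref{eq:kostant} for $\mu$ have distinct central characters, be such that $\mu+\rho(M,\mu_0)$ is again ``sufficiently general'' for each relevant $M$, and be such that $N(\mu,t)$ divides $N_M$ for every relevant $M$. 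The step I expect to be the main obstacle is verifying that this last divisibility holds once $\mu$ avoids a suitable proper Zariski-closed subset and lies deep enough in the dominant cone, so that the conditions remain compatible with the Zariski-density of weights used in \S\ref{sec:eigenvariety}; this amounts to a computation with the length-preserving embedding $W_M \hookrightarrow W_G$, the definition of $N(\cdot,\cdot)$, and the contracting property $t^{-1}N_0 t \subset N_0^p$. The rest of the argument is routine bookkeeping with multiplicative characteristic power series.
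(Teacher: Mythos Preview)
Your proposal is correct and follows essentially the same inductive scheme as the paper: expand the definition of $C^{\bullet}_{G,K^p,\mu,\cusp}$, apply Lemma~\ref{lem:interp} to the full-cohomology factor, apply the inductive hypothesis to each Levi factor, and then invoke Proposition~\ref{prop:eisprod2} to cancel the boundary terms.

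The only point where you diverge from the paper is in your treatment of the modulus compatibility, which you flag as the main obstacle and propose to handle by imposing a long list of genericity conditions on $\mu$. The paper disposes of this in one line: $\rho(M,\mu_0)=(1-w(P_{\mu_0})^{-1})\rho$ is $M$-dominant (this is a standard property of minimal-length coset representatives in $W_M\backslash W_G$), and since $W_M\hookrightarrow W_G$ and $(w-1)\rho=(w-1)\rho_M$ for $w\in W_M$, one gets directly $\OO_F\db{N_M(\mu+\rho(M,\mu_0),t)X}\subseteq\OO_F\db{N(\mu,t)X}$. The scalar twist $t^{(1-w^{-1})\rho}$ in the map $\HH_G\to\HH_M$ that worried you is absorbed by this same dominance and does not spoil the inclusion. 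So your ``main obstacle'' dissolves once you note the $M$-dominance of $\rho(M,\mu_0)$; no further Zariski-genericity beyond \eqref{eq:converge2} (iterated to all relevant Levis) is needed.
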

\begin{proof}
By induction, we may assume that the proposition holds for all Levi subgroups of $G$.
\[ \begin{split}
& \det(1-Xf|C^{\bullet}_{G,K^p,\mu,\cusp}) \\
\equiv & \det(1-Xf|H^{\bullet}(S_G(K),L^G_{\mu})) \prod_{M, K_M} \det(1-Xf|H^{\bullet}_!(S_M(K_M),L^M_{\mu+\rho(M,\mu_0)}))^{(-1)^{l(M)-\dim N+1}} \\
\equiv & \det(1-Xf|H^{\bullet}_!(S_G(K),L^G_{\mu}))\pmod {\OO_F \db{N(\mu,t)X}}
\end{split} \]
where we used the induction hypothesis and Lemma \ref{lem:interp}
in the second line and Proposition \ref{prop:eisprod2} in the third line.
We also use the fact that $\rho(M,\mu_0)$ is $M$-dominant, and so
$\OO_F \db{N(\mu+\rho(M,\mu_0),t) X} \subseteq \OO_F \db{N(\mu,t) X}$.
\end{proof}

The analysis of section \ref{sec:auto-fredholm} applies equally well to
$C^{\bullet}_{G,K^p,\lambda,\cusp}$.
For any $f \in \HH_G'$, we may define a characteristic power series
$\det \left( 1-Xf \middle| C^{\bullet}_{G,K^p,\lambda,\cusp} \right)$.
If the Fredholm series
$P_+(X) = \prod_i \det \left(1 - Xf \middle| C^i_{G,K^p,\lambda,\cusp} \right)$ has a factorization
$P_+ = Q_+ S_+$ with $Q_+$ a polynomial with invertible leading coefficient, then this factorization
induces a decomposition
$C^{\bullet}_{G,K^p,\lambda,\cusp} = N^{\bullet} \oplus F^{\bullet}$.
\begin{rmk}
One can use Proposition \ref{prop:utfredholm} to show that
for $f \in \HH_G'$,
$\det \left( 1-Xf \middle| C^{\bullet}_{G,K^p,\lambda,\cusp} \right)$
is a Fredholm series.  We will not need to prove this fact for arbitrary
$A$ and $\lambda$, so we leave the details of the argument as an exercise
for the reader.
\end{rmk}
\begin{rmk}
Recall that we took $K_{\infty}$ to be a maximal compact modulo center
subgroup of $G(\RR)$.  In particular, $K_{\infty}$ is not connected in general.
Similarly, $K_{\infty,M}$ is a maximal compact modulo center subgroup of $M(\RR)$.
One could instead take $K_{\infty}$ to be connected (as in e.g. \cite{Han15}).  Then some care is
needed in defining $K_{\infty,M}$---it should not be connected in general.
\end{rmk}

\section{Theory of determinants} \label{sec:determinant}
\numberwithin{equation}{section}

Urban's eigenvariety construction makes use of pseudocharacters.
Chenevier's theory of determinants \cite{Che14} is equivalent to the theory of
pseudocharacters
when the rings involved are $\QQ$-algebras \cite[Proposition 1.27]{Che14}, but
is better behaved in general.
Since we work with rings in which $p$ is not invertible, we will use
determinants.  (However, it is probably not strictly necessary to use
determinants, as we work with rings that are $p$-torsionfree.  See
Corollary \ref{cor:non zero div} and the proof
of Lemma \ref{lem:gluing}.)

We will recall some basic definitions from \cite{Che14}
and prove a lemma concerning the ratio of two determinants.

\begin{defn}[{\cite[\S 1.1-1.5]{Che14}}]
Let $A$ be commutative ring, and let $R$ be an $A$-module.
An \emph{$A$-valued polynomial law on $R$} is a rule
that assigns to any commutative $A$-algebra $B$ a map of sets
$D_B \colon R \otimes_A B \to B$ that is functorial in the sense that
for any $A$-algebra homomorphism $f \colon B \to B'$,
\[ D_{B'} \circ (\id_R \otimes f) = f \circ D_B \,. \]

Let $d$ be a nonnegative integer.
We say that a polynomial law $D$ is \emph{homogeneous of degree $d$}
if
\[ D_B(br)=b^d D_B(r) \quad \forall B\,, b \in B,\, r \in R \otimes_A B \,. \]

Now assume that $R$ is an $A$-algebra.  We say that a polynomial law
$D$ is \emph{multiplicative} if
\[ D_B(1)=1, \quad D_B(rr')=D_B(r) D_B(r') \quad \forall B\,, r,r' \in R \otimes_A B \,. \]

We say that a polynomial law $D$ is a \emph{determinant of dimension $d$}
if it is homogeneous of degree $d$ and multiplicative.
\end{defn}

\begin{ex} \label{module determinant}
Let $M$ be an $R$-module that is projective of rank $d$ as an $A$-module.
Then the rule that sends $r \in R \otimes_A B$ to
$\det(r | M \otimes_A B)$ is a determinant of dimension $d$.
\end{ex}

\begin{lem}[{\cite[Proposition I.1]{Rob63}}] \label{lem:roby}
Let $A$ be a commutative ring, and let $R$ be an $A$-module.
Let $D$ be an $A$-valued polynomial law on $R$ that is homogeneous of
degree $d$, let $n$ be a positive integer, and let $r_1,\dotsc,r_n \in R$.  Then
$D_{A[X_1,\dotsc,X_n]}(X_1 r_1 + \dotsb + X_n r_n)$
is a homogeneous polynomial of degree $d$ in $X_1,\dotsc,X_n$.
\end{lem}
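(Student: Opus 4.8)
The plan is to deduce everything from the two defining properties of $D$ that are available to us: functoriality under base change and homogeneity of degree $d$. Write $B \colonequals A[X_1,\dotsc,X_n]$ and let $r \colonequals X_1 r_1 + \dotsb + X_n r_n \in R \otimes_A B$, where each $r_i$ is identified with $r_i \otimes 1$. Since $D_B$ takes values in $B$, the element $D_B(r)$ is automatically a polynomial in $X_1,\dotsc,X_n$ with coefficients in $A$; the only content of the lemma is that this polynomial is homogeneous of degree $d$.

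First I would introduce one more indeterminate $t$ and the polynomial ring $B[t]=A[X_1,\dotsc,X_n,t]$, together with two $A$-algebra homomorphisms $B \to B[t]$: the natural inclusion $\iota$, and the ``scaling'' homomorphism $\sigma$ determined by $\sigma(X_i)=tX_i$. Applying the functoriality axiom of $D$ to $\iota$ gives $D_{B[t]}\bigl((\id_R \otimes \iota)(r)\bigr) = \iota\bigl(D_B(r)\bigr)$, and applying it to $\sigma$ gives $D_{B[t]}\bigl((\id_R \otimes \sigma)(r)\bigr) = \sigma\bigl(D_B(r)\bigr)$. The key observation is that $(\id_R \otimes \sigma)(r) = t \cdot (\id_R \otimes \iota)(r)$ in $R \otimes_A B[t]$, which is immediate from $\sigma(X_i)=tX_i$. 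Homogeneity of degree $d$ then yields $D_{B[t]}\bigl(t \cdot (\id_R \otimes \iota)(r)\bigr) = t^d\, D_{B[t]}\bigl((\id_R \otimes \iota)(r)\bigr)$, and combining the three displayed identities gives $\sigma\bigl(D_B(r)\bigr) = t^d\, \iota\bigl(D_B(r)\bigr)$ in $B[t]$.

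It then remains to extract the conclusion. Writing $D_B(r) = \sum_{\mathbf{m}} c_{\mathbf{m}} X^{\mathbf{m}}$ as a finite sum over multi-indices $\mathbf{m} \in \NN^n$ with $c_{\mathbf{m}} \in A$, and letting $|\mathbf{m}|$ denote the sum of the entries of $\mathbf{m}$, the left-hand side of the identity is $\sum_{\mathbf{m}} c_{\mathbf{m}} t^{|\mathbf{m}|} X^{\mathbf{m}}$ while the right-hand side is $\sum_{\mathbf{m}} c_{\mathbf{m}} t^{d} X^{\mathbf{m}}$. Since the monomials $t^j X^{\mathbf{m}}$ form an $A$-basis of $B[t]$, comparing coefficients forces $c_{\mathbf{m}}=0$ whenever $|\mathbf{m}| \ne d$; that is, $D_B(r)$ is homogeneous of degree $d$, as claimed.

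I do not expect a serious obstacle: the argument is a purely formal manipulation of the polynomial-law axioms, and this is essentially Roby's original proof. The only point requiring care is the bookkeeping with the base-change maps --- in particular the verification that $(\id_R \otimes \sigma)(r) = t \cdot (\id_R \otimes \iota)(r)$ --- together with the observation that $D_B(r)$ genuinely lives in the polynomial ring $B$, so that ``comparing coefficients'' is legitimate.
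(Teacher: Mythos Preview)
Your argument is correct. The paper does not supply its own proof of this lemma but merely cites Roby's Proposition I.1; your manipulation with the auxiliary variable $t$ and the scaling homomorphism $\sigma$ is essentially Roby's original argument, so there is nothing to compare.
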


\begin{lem} \label{lem:detratio}
Let $A$ be a commutative ring, let $R$ be an $A$-algebra, and
let $D^+$, $D^-$ be $A$-valued determinants on $R$ of dimension $d_+$, $d_-$,
respectively, with $d_+ \ge d_-$.  Let $d=d_+-d_-$.
There is at most one determinant $D$ of
dimension $d$ satisfying $D_B^+(r) = D_B^-(r) D_B(r)$ for all
$A$-algebras $B$ and all $r \in R \otimes_A B$.

The following are equivalent:
\begin{enumerate}
\item There exists a determinant $D$ satisfying the above condition.
\item For any commutative $A$-algebra $B$
and $r \in R \otimes_A B$, the quotient
\[ D^+_{B[X]}(1+Xr)/D^-_{B[X]}(1+Xr) \]
exists in $B[X]$ and has degree at most $d$.
\item For any positive integer $n$ and $r_1,\dotsc,r_n \in R$,
the quotient
\[ D^+_{A[X_1,\dotsc,X_n]}(1+X_1 r_1 + \dotsb + X_n r_n)/D^-_{A [X_1,\dotsc,X_n]}(1+X_1 r_1 + \dotsb + X_n r_n) \]
exists in $A[X_1,\dotsc,X_n]$ and has total degree at most
$d$.
\end{enumerate}
\end{lem}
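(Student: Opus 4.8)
The plan is to extract everything from a single observation: for a determinant $D^-$ of dimension $d_-$ and any $r\in R\otimes_A B$, the element $D^-_{B[t]}(t\cdot 1+r)\in B[t]$ is monic of degree $d_-$ in $t$. Indeed, applying Lemma~\ref{lem:roby} over $B$ to the two elements $1$ and $r$ shows that $D^-_{B[s,t]}(s\cdot 1+t\cdot r)$ is homogeneous of degree $d_-$ in $s,t$, and its coefficient of $s^{d_-}$ is $D^-_B(1)=1$; specializing $s\mapsto t,\ t\mapsto 1$ gives the claim, and the same works for $D^+$. A monic polynomial is a non-zero-divisor, so whenever $D^-_{B[t]}(t\cdot 1+r)$ divides $D^+_{B[t]}(t\cdot 1+r)$ the quotient is unique; likewise $D^-_{B[X]}(1+Xr)$ has constant term $D^-_B(1)=1$, hence is a non-zero-divisor in $B[X]$, so ``the quotient'' in conditions (2) and (3) is unambiguous. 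Uniqueness of $D$ follows at once: if $D$ exists then $D_{B[t]}(t\cdot 1+r)$ must be $D^+_{B[t]}(t\cdot 1+r)/D^-_{B[t]}(t\cdot 1+r)$, and $D_B(r)$ is recovered by applying the $A$-algebra map $t\mapsto 0$.

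For the equivalences, (1)$\Rightarrow$(2) and (1)$\Rightarrow$(3) are formal: the quotient in question is $D_{B[X]}(1+Xr)$, resp.\ $D_{A[X_1,\dots,X_n]}(1+X_1r_1+\dots+X_nr_n)$, and the degree bounds follow from Lemma~\ref{lem:roby} applied to $D$ (homogeneous of degree $d$) together with the extra element $1$. For (3)$\Rightarrow$(2) one writes $r=\sum_i\rho_i\otimes b_i$ and substitutes $X_i\mapsto Xb_i$ in the identity furnished by (3). For (2)$\Rightarrow$(3), apply (2) over $B=A[X_1,\dots,X_n]$ with $r=\sum_i\rho_iX_i$: the quotient lies in $A[X_1,\dots,X_n][Y]$, and by Lemma~\ref{lem:roby} the coefficient of $Y^k$ in $D^{\pm}_{A[X_1,\dots,X_n,Y]}(1+Y\sum_i\rho_iX_i)$ is homogeneous of degree $k$ in the $X_i$; since the denominator has $Y$-constant term $1$, the division is forced degree by degree, so the quotient has this same graded shape, and specializing $Y\mapsto1$ gives a polynomial of total degree $\le d$.

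The substance of the lemma is the construction of $D$ from (2). Set $D_B(r):=\bigl(D^+_{B[t]}(t\cdot1+r)/D^-_{B[t]}(t\cdot1+r)\bigr)\big|_{t\mapsto0}$, which makes sense because, writing $\widehat{D^{\pm}}(s,t)=D^{\pm}_{B[s,t]}(s\cdot1+t\cdot r)$ (bihomogeneous of degrees $d_{\pm}$), condition (2) gives $\widehat{D^+}(1,X)=\widehat{D^-}(1,X)\,Q(X)$ with $\deg Q\le d$, and homogenizing this identity in $s$ yields $\widehat{D^+}(s,t)=\widehat{D^-}(s,t)\,\widehat Q(s,t)$ with $\widehat Q$ homogeneous of degree $d$, whence specializing $s\mapsto t,\ t\mapsto1$ exhibits the desired quotient as $\widehat Q(t,1)$, of $t$-degree $\le d$. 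Functoriality of $D_B$ holds because any $A$-algebra map $B\to B'$ extends to $B[t]\to B'[t]$, carries monic polynomials to monic polynomials, and hence preserves the unique quotient; $D_B(1)=1$ is the computation $(t+1)^{d_+}/(t+1)^{d_-}\big|_{t=0}=1$; the relation $D^+_B=D^-_B\,D_B$ comes from specializing $\widehat{D^+}(s,t)=\widehat{D^-}(s,t)\widehat Q(s,t)$ at $s\mapsto0,\ t\mapsto1$; and $D$ is homogeneous of degree $d$ because $\widehat Q(0,t)=c\,t^d$ for some $c\in B$, so $D_B(br)=\widehat Q(0,b)=b^d D_B(r)$.

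The one step that needs more than bookkeeping is multiplicativity $D_B(rr')=D_B(r)D_B(r')$: from multiplicativity of $D^{\pm}$ and the relation $D^+=D^-D$ one gets $D^-_B(rr')\,D_B(rr')=D^-_B(rr')\,D_B(r)D_B(r')$, but $D^-_B(rr')$ need not be a non-zero-divisor, so one cannot cancel directly. I expect this to be the only genuinely delicate point, and the fix is to verify the identity first in a universal situation where the denominator \emph{is} a non-zero-divisor: for $r=\sum_i\rho_i\otimes b_i$ and $r'=\sum_j\sigma_j\otimes b'_j$, work over $C=A[X_i,Y_j,t,t']$ with $\mathbf r=t\cdot1+\sum_i\rho_iX_i$, $\mathbf r'=t'\cdot1+\sum_j\sigma_jY_j$; there $D^-_C(\mathbf r\mathbf r')=D^-_C(\mathbf r)\,D^-_C(\mathbf r')$ is a product of a polynomial monic in $t$ and one monic in $t'$, hence a non-zero-divisor in $C$, so $D_C(\mathbf r\mathbf r')=D_C(\mathbf r)D_C(\mathbf r')$ in $C$. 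Specializing $t,t'\mapsto0$, $X_i\mapsto b_i$, $Y_j\mapsto b'_j$ and invoking functoriality gives $D_B(rr')=D_B(r)D_B(r')$, completing the proof that $D$ is a determinant of dimension $d$ with $D^+=D^-D$.
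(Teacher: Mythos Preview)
Your proof is correct and follows the same overall architecture as the paper's, but the two diverge in a couple of interesting ways. The paper works throughout with $F_{B[X]}(1+Xr)$ and defines $D_B(r)$ as the coefficient of $X^d$; you additionally invoke the ``characteristic polynomial'' $D^{\pm}_{B[t]}(t\cdot 1+r)$ and pass between the two pictures by homogenizing in an auxiliary variable $s$, which is a nice way to see why the degree bound in (2) is exactly what makes the constant term at $t=0$ meaningful. The real difference is in multiplicativity. The paper's trick is to look at $F_{B[X_1,X_2,X_3]}(1+X_1r_1+X_2r_2+X_3r_1r_2)$, use the total-degree bound from (3) to isolate the $X_3^d$ coefficient, and then specialize $X_3\mapsto X_1X_2$ and $X_1,X_2\mapsto 0$; this is slick but tacitly needs (3), which in the paper's logical order is not yet available from (2) alone (though it follows by exactly the graded-division argument you give for $(2)\Rightarrow(3)$). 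Your approach instead lifts $r,r'$ to generic elements $\mathbf r=t+\sum\rho_iX_i$, $\mathbf r'=t'+\sum\sigma_jY_j$ over a polynomial ring $C$, where $D^-_C(\mathbf r\mathbf r')$ is visibly a non-zero-divisor and cancellation is legal, then specializes. This ``pass to the universal case'' maneuver is more conceptual and self-contained, while the paper's variable trick is shorter once the degree bound is in hand; either buys you the same conclusion.
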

\begin{proof}
Let $B$ be a commutative $A$-algebra, and let $r \in R \otimes_A B$.
If $D_B^-(r)$ is not a zero divisor and the quotient $D_B^+(r)/D_B^-(r)$
exists, then we will denote this quotient by
$F_B(r)$.  Note that $D^-_{B[X]}(1+Xr)$ has constant term $1$ by
functoriality with respect to $X \mapsto 0$, so it is not a zero divisor.
Similarly, $D^-_{A[X_1,\dotsc,X_n]}(1+X_1 r_1+\dotsb+X_n r_n)$ has constant
term $1$ and is not a zero divisor.

First, we check that $D$ is uniquely determined if it exists.
Suppose $D$ is a determinant satisfying the conditions of the
lemma.
Let $B$ be a commutative $A$-algebra, and let $r \in R \otimes_A B$.
We claim that the following quantities are equal.
\begin{itemize}
\item $D_B(r)$
\item the coefficient of $X^d Y^0$ in $D_{B[X,Y]}(Y+Xr)$
\item the coefficient of $X^d$ in $D_{B[X]}(1+Xr)$
\end{itemize}
To see that the first and second quantities are equal,
apply functoriality with respect to $X \mapsto 1, Y \mapsto 0$,
using Lemma \ref{lem:roby} to show that $D_{B[X,Y]}(Y+Xr)$ has no $X^n Y^0$
term for $n \ne d$.
To see that the second and third quantities are equal, apply
functoriality with respect to $Y \mapsto 1$, using Lemma \ref{lem:roby}
to show that $D_{B[X,Y]}$ has no $X^d Y^n$ term for $n \ne 0$.
Finally, observe that $F_{B[X]}(1+Xr)$ must exist and must equal
$D_{B[X]}(1+Xr)$.  So $D_B(r)$ must be equal to the coefficient of $X^d$
in $F_{B[X]}(1+Xr)$.  Hence $D$ is uniquely determined if it exists.

Lemma \ref{lem:roby} shows that $(1) \Rightarrow (3)$.

Now we prove $(3) \Rightarrow (2)$.  Assume $(3)$ holds.
Choose a commutative $A$-algebra $B$ and $r \in B \otimes_A R$.
Condition $(3)$ implies that
$F_{A[X_1,\dotsc,X_n]}(1+X_1 r_1 + \dotsb + X_n r_n)$ exists and has
total degree at most $d$.  Then by functoriality with respect to
$X_i \mapsto X b_i$, $F_{B[X]}(1+Xr)$ exists and has degree at most $d$.
This proves $(3) \Rightarrow (2)$.

Now we will show that $(2) \Rightarrow (1)$.  Assume that condition (2) holds.
Define $D_B(r)$ be the coefficient of $X^{d}$ in
$F_{B[X]}(1+Xr)$.  We know that $D^+_{B[X]}(1+Xr)$
(resp. $D^-_{B [X]}(1+Xr)$, $F_{B[X]}(1+Xr)$) has degree
at most $d_+$ (resp. $d_-$, $d$), and we have already showed that the
coefficient of $X^{d_+}$ (resp. $X^{d_-}$, $X^{d}$) is $D_B^+(r)$
(resp. $D_B^-(r)$, $D_B(r)$).  So $D_B^+(r)=D_B^-(r) D_B(r)$.

It remains to show that $D$ is a determinant.
Since $D^+$ and $D^-$ are functorial, $D$ is as well.
To show that $D$ is homogeneous of degree $d$, observe that
the map $X \mapsto bX$ multiplies the coefficient of $X^d$
in $F_{B[X]}(1+Xr)$ by $b^d$.

Finally, we check that $D$ is multiplicative.
We have $F_{B[X]}(1+X)=(1+X)^{d}$, so $D_B(1)=1$.
Observe that $D_B(r_1) D_B(r_2)$ is the coefficient of $(X_1 X_2)^d$ in
$F_{B[X_1,X_2]}(1+X_1 r_1 + X_2 r_2 + X_1 X_2 r_1 r_2)$.
This is the same as the coefficient of $X_1^0 X_2^0 X_3^d$ in
$F_{B[X_1,X_2,X_3]}(1+X_1 r_1 + X_2 r_2 + X_3 r_1 r_2)$, since $X_3^d$
is the only monomial of total degree $d$ in $B[X_1,X_2,X_3]$ that maps
to $(X_1 X_2)^d$ under $X_3 \mapsto X_1 X_2$.
Then applying $X_1 \mapsto 0$, $X_2 \mapsto 0$, we find that
$D_B(r_1) D_B(r_2)$ is the coefficient of $X_3^d$ in
$F_{B[X_3]}(1+X_3 r_1 r_2)$, which is $D_B(r_1 r_2)$.
This concludes the proof that $(2) \Rightarrow (1)$.
\end{proof}

\begin{cor} \label{det injective}
Retain the notation of Lemma \ref{lem:detratio}.
Let $A \hookrightarrow A'$ be an injective map of commutative rings.  Suppose that
there exists an $A'$-valued determinant $D'$ on $R \otimes_A A'$ satisfying
$D_B^+(r) = D_B^-(r) D'_B(r)$ for any $A'$-algebra $B$ and $r \in R \otimes_A B$.
Then there exists an $A$-valued determinant $D$ on $R$ satisfying
$D_B^+(r) = D_B^-(r) D_B(r)$ for any $A$-algebra $B$ and $r \in R \otimes_A B$.
\end{cor}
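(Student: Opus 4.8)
The plan is to deduce the existence of $D$ from the equivalence in Lemma \ref{lem:detratio}, by verifying condition $(3)$ of that lemma over $A$; the implication $(3)\Rightarrow(1)$ then furnishes the desired $A$-valued determinant. We may assume $A\neq 0$. Write $d_+$, $d_-$ for the dimensions of $D^+$, $D^-$ and set $d=d_+-d_-$. First I would record that $D'$ is forced to have dimension $d$: if $d'$ is its dimension, then evaluating $D^+_B(b\cdot 1_R)=D^-_B(b\cdot 1_R)\,D'_B(b\cdot 1_R)$ at $B=A'[b]$ and using homogeneity together with $D^+_B(1)=D^-_B(1)=D'_B(1)=1$ gives $b^{d_+}=b^{d_-+d'}$, hence $d'=d$. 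So it remains to check: for every positive integer $n$ and all $r_1,\dots,r_n\in R$, with $\underline{X}=(X_1,\dots,X_n)$, the quotient
\[ q\colonequals D^+_{A[\underline{X}]}(1+X_1r_1+\dotsb+X_nr_n)\big/D^-_{A[\underline{X}]}(1+X_1r_1+\dotsb+X_nr_n) \]
exists in $A[\underline{X}]$ and has total degree at most $d$.

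Writing $f$ and $h$ for the numerator and denominator, functoriality of $D^-$ under $X_i\mapsto 0$ shows $h$ has constant term $D^-_A(1)=1$, so $h$ is a unit in the power series ring $A\db{\underline{X}}$ and $q=fh^{-1}$ exists \emph{there}. The only real point is to show this power series $q$ is a polynomial of total degree $\le d$, and here I would pass to $A'$. Let $\psi\colon A[\underline{X}]\to A'[\underline{X}]$ be the base-change homomorphism; it is injective because $A\hookrightarrow A'$ is. By functoriality of the polynomial laws $D^+$ and $D^-$, the images $\psi(f)$ and $\psi(h)$ are $D^+_{A'[\underline{X}]}(1+X_1r_1'+\dotsb+X_nr_n')$ and $D^-_{A'[\underline{X}]}(1+X_1r_1'+\dotsb+X_nr_n')$, where $r_i'$ is the image of $r_i$ in $R\otimes_A A'$. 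Applying the hypothesis with $B=A'[\underline{X}]$ and $r=1+X_1r_1'+\dotsb+X_nr_n'$ then gives $\psi(f)=\psi(h)\,q'$ in $A'[\underline{X}]$, where $q'\colonequals D'_{A'[\underline{X}]}(1+X_1r_1'+\dotsb+X_nr_n')$. By Lemma \ref{lem:roby} applied to $D'$ (homogeneous of degree $d$) and the substitution of $1$ for the extra variable in $D'_{A'[Y,\underline{X}]}(Y\cdot 1_R+X_1r_1'+\dotsb+X_nr_n')$, the polynomial $q'$ has total degree at most $d$.

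To finish, extend $\psi$ to the coefficientwise ring homomorphism $\hat{\psi}\colon A\db{\underline{X}}\to A'\db{\underline{X}}$, which is still injective. Since $\psi(h)$ is a unit in $A'\db{\underline{X}}$ and $\hat{\psi}(h)\hat{\psi}(q)=\hat{\psi}(f)=\psi(f)=\psi(h)\,q'$, we get $\hat{\psi}(q)=q'$. Because $\hat{\psi}$ is injective and acts coefficient by coefficient, $q$ and $q'$ have the same support, which is finite; hence $q\in A[\underline{X}]$ with the same total degree as $q'$, i.e.\ at most $d$. This verifies condition $(3)$ over $A$, and Lemma \ref{lem:detratio} supplies the determinant $D$ with $D^+_B(r)=D^-_B(r)\,D_B(r)$ for all $A$-algebras $B$ and $r\in R\otimes_A B$, as required. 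The one step needing care --- and the place where injectivity of $A\hookrightarrow A'$ is used --- is precisely this last descent of an a priori power-series quotient to a genuine polynomial over $A$; everything else is bookkeeping with functoriality of polynomial laws.
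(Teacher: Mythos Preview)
Your proof is correct and follows essentially the same route as the paper: verify condition~(3) of Lemma~\ref{lem:detratio} over $A$ by computing the quotient in $A\db{\underline{X}}$, passing to $A'\db{\underline{X}}$ via the injective coefficientwise map, and observing that there the quotient equals $D'_{A'[\underline{X}]}(1+\sum X_ir_i')$, which is a polynomial of total degree at most $d$. The paper's proof is terser---it simply cites the equivalence $(1)\Leftrightarrow(3)$ and notes that the power-series quotient is a polynomial over $A$ iff it is one over $A'$---but the content is identical; you have just unpacked the $(1)\Rightarrow(3)$ step over $A'$ and the descent explicitly.
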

\begin{proof}
Apply the equivalence $(1) \Leftrightarrow (3)$ of Lemma \ref{lem:detratio}.
Observe that
$F_{A[X_1,\dotsc,X_n]}(1+X_1 r_1+\dotsb+X_n r_n)$ exists and has degree total degree $\le d$ iff
$F_{A\db{X_1,\dotsc,X_n}}(1+X_1 r_1 + \dotsb+X_n r_n)$ is a polynomial of total degree $\le d$.
Since $A \hookrightarrow A'$ is injective, if $F_{A'\db{X_1,\dotsc,X_n}}(1+X_1 r_1 + \dotsb+X_n r_n)$
is a polynomial of total degree $\le d$, then $F_{A\db{X_1,\dotsc,X_n}}(1+X_1 r_1 + \dotsb+X_n r_n)$
is as well.
\end{proof}

\begin{defn}[{\cite[\S 1.17, Lemma 1.19(i)]{Che14}}]
Let $D$ be an $A$-valued determinant on $R$.  We denote by $\ker(D)$
the set of $r \in R$ such that for all $B$ and all
$r' \in B \otimes_A R$, $D_B(1+r'r)=1$.
\end{defn}
\begin{rmk}
Let $M$ be projective $A$-module of rank $d$,
let
$\rho \colon R \to \End(M)$ be a homomorphism,
and let $D$ be the determinant
associated with $\rho$, as in Example \ref{module determinant}.
Then $\ker \rho \subseteq \ker D$.
Conversely, if $r \in \ker D$, then $D_{A[X]}(X-r)=X^d$,
so
$r^d \in \ker \rho$ by the Cayley-Hamilton theorem.
\end{rmk}
\begin{rmk}
Chenevier also defines the Cayley-Hamilton ideal $\operatorname{CH}(D)$.
Assume $D$ comes from a homomorphism $\rho \colon R \to \End(M)$.
Then $\operatorname{CH}(D) \subseteq \ker \rho$.
So we might think of $\ker D$ as an upper bound for $\ker \rho$
and $\operatorname{CH}(D)$ as a lower bound.
If $A$ is Noetherian, then since $\ker \rho \subseteq \ker D$, $R/\ker D$
is a finite $A$-module.  However, $R/\operatorname{CH}(D)$ need not be a
finite $A$-module, making it more difficult to use
$\operatorname{CH}(D)$ in the construction of eigenvarieties.

For a concrete example, consider $A=\QQ$, $M=\QQ^2$,
$R=\QQ[T_1,T_2,\dotsc]$, and let $\rho$ be a map that sends
each $T_i$ to a nilpotent upper triangular matrix.
Then $\ker D = (T_1,T_2,\dotsc)$ (so $R/\ker D \cong \QQ$),
$\ker \operatorname{CH}(D) = (T_1,T_2,\dotsc)^2$
(so $R/\operatorname{CH}(D)$ is not finite type over $\QQ$),
and $R/\ker \rho$ is isomorphic to either $\QQ$ or
$\QQ[\epsilon]/(\epsilon^2)$.
\end{rmk}
\numberwithin{equation}{subsection}

\section{Construction of the eigenvariety} \label{sec:eigenvariety}

\subsection{Weight space and Fredholm series}

Now we are ready to define the eigenvariety following \cite[\S 5]{Urb11}.
We will use Huber's theory of adic spaces \cite{Hub93, Hub94, Hub96};
see also \cite[\S 2-5]{berkeley} for a modern introduction.
Some aspects of our approach follow \cite[Appendice B]{AIP15}.

We return to the setup of sections
\ref{sec:automorphic}--\ref{sec:eisenstein}.
We continue to assume that $G(\RR)$ has discrete series.
Let $T'$ be the quotient of $T_0$ by the closure of $Z_G(\QQ) G_{\infty}^+ K^p \cap T_0$.
We define the weight space
\[ \WW \colonequals \Spa(\Zp \db{T'},\Zp \db{T'})^{\an} \,. \]

Let $\UU = \Spa(A,A^+)$ be an open affinoid subset of $\WW$ with $A$
a complete Tate $\Zp$-algebra (which is automatically Noetherian).
Let $\lambda \colon T_0 \to A^{\times}$ be the tautological character induced by the
map $T_0 \to T' \to \Zp \db{T'}$.

For any $f \in \HH'_G \otimes_{\Zp} A$, let
\[ P_f(X) \colonequals \det \left( 1-Xf \middle| C^{\bullet}_{G,K^p,\lambda,\cusp} \right)^{(-1)^{d/2}} \,. \]
Note that $d=\dim S_G(K)$ is even since $G(\RR)$ has discrete series.
If $\VV$ is an open subspace of $\WW$, and $f \in \HH'_G \otimes_{\Zp} \OO_{\WW}(\VV)$,
then we define $P_f(X)$ by gluing.

\begin{defn}
Let $\VV$ be an open subspace of $\WW$.  A series
$f \in \OO_{\WW}(\VV) \db{X}$
is called a Fredholm series if it is the power series expansion of some
global section of $\VV \times \mathbb{A}^1$ and its leading coefficient is $1$.
\end{defn}
This definition agrees with Definition \ref{def:fredholmtate} if
$\VV=\Spa(A,A^+)$ with $A$ a complete Tate $\Zp$-algebra.

\begin{prop} \label{prop:utfredholm}
For $f \in \HH_G'$,
the series $P_{f}(X) \in \OO_{\WW}(\WW)\db{X}$ is a Fredholm series.
\end{prop}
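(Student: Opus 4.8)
The plan is to write $P_f(X)$ as a ratio of two genuine Fredholm series and to show the ratio is again Fredholm by comparing it, at a Zariski-dense set of classical weights, with the characteristic polynomial of the Hecke action on a finite-dimensional interior cohomology group, using Proposition \ref{prop:interpcusp}. First I would set
\[ R(X) \colonequals \prod_{i \equiv d/2\,(2)} \det\!\left(1-Xf \,\middle|\, C^i_{G,K^p,\lambda,\cusp}\right), \qquad S(X) \colonequals \prod_{i \not\equiv d/2\,(2)} \det\!\left(1-Xf \,\middle|\, C^i_{G,K^p,\lambda,\cusp}\right), \]
so that $P_f = R/S$ (recall $d=\dim S_G(K)$ is even). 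Since the analysis of section \ref{sec:auto-fredholm} applies to $C^{\bullet}_{G,K^p,\lambda,\cusp}$, over each affinoid open $\UU=\Spa(A,A^+)\subseteq\WW$ every factor $\det(1-Xf\mid C^i_{G,K^p,\lambda,\cusp})$ lies in $A\fs{X}$ and has constant term $1$; hence $R,S\in\OO_{\WW}(\WW)\fs{X}$ are Fredholm series, $S$ is a unit in $\OO_{\WW}(\WW)\db{X}$, and $P_f=R/S\in\OO_{\WW}(\WW)\db{X}$ automatically has constant term $1$. By the sheaf property it then suffices to prove $P_f(X)\in A\fs{X}$ for $\UU=\Spa(A,A^+)$ running over an affinoid cover of $\WW$; we may take each such $A$ to be $\Zp$-torsion free, as $\Zp\db{T'}$ is.

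Fix such a $\UU$ and write $P_f=\sum_n a_n X^n$ with $a_n\in A$ (the division being performed in $A\db{X}$). By the construction of $\WW$ (cf.\ the discussion following Lemma \ref{lem:interp}), for every $N\in\NN$ the set of points $x\in\UU$ attached to regular dominant algebraic weights $\mu_x$ with $v_p(N(\mu_x,t))\ge N$ is Zariski dense in $\UU$. At such a point $x$ the specialization of $P_f$ is $\det(1-Xf\mid C^{\bullet}_{G,K^p,\mu_x,\cusp})^{(-1)^{d/2}}$, and Proposition \ref{prop:interpcusp}, combined with the fact that $H^{\bullet}_!(S_G(K),L^G_{\mu_x})$ is concentrated in the middle degree $d/2$ (because $G(\RR)$ has discrete series and $\mu_x$ is regular), gives
\[ \sum_n \phi_x(a_n)\,X^n \;\equiv\; \det\!\left(1-Xf \,\middle|\, H^{d/2}_!(S_G(K),L^G_{\mu_x})\right) \pmod{\OO_{F_x}\db{N(\mu_x,t)X}}, \]
the right-hand side being a polynomial of degree $D_x\colonequals \dim_{F_x} H^{d/2}_!(S_G(K),L^G_{\mu_x})$ with constant term $1$. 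In particular $v_p(\phi_x(a_n))\ge n\,v_p(N(\mu_x,t))$ whenever $n>D_x$.

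The remaining, and main, step is to convert these pointwise congruences into an estimate for the $a_n$ inside $A$ strong enough to place $P_f$ in $A\fs{X}$, i.e.\ to show that $\alpha^{-\lfloor cn\rfloor}a_n\to 0$ for every topologically nilpotent unit $\alpha$ and every $c>0$. The mechanism is that along rays of weights $\mu_x$ the valuation $v_p(N(\mu_x,t))$ grows linearly while $D_x$ grows only polynomially in it, so that for each fixed large $n$ there is a Zariski-dense supply of classical points with $D_x<n$ and $v_p(N(\mu_x,t))$ of order a fixed positive power of $n$; the displayed congruence then forces $v_p(\phi_x(a_n))$ to grow superlinearly in $n$ along a Zariski-dense set, and since $A$ is Noetherian and $\Zp$-torsion free this dense divisibility pushes $a_n$ into arbitrarily high powers of $\alpha$ as $n\to\infty$. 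This is the adic-space translation of Urban's argument \cite{Urb11} that the cuspidal characteristic series is Fredholm; the delicate point — and the place I expect the real work to lie — is making the norm estimates of sections \ref{sec:mod}--\ref{sec:la} function uniformly over $A$ at the boundary of weight space, where $p$ need not be invertible, so that the dense pointwise bounds genuinely bound $a_n$ in $A$. Granting this, $P_f(X)\in A\fs{X}$ for every $\UU$, and therefore $P_f(X)$ is a Fredholm series on $\WW$.
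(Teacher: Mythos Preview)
Your overall strategy---reduce to classical weights via Proposition \ref{prop:interpcusp} and then invoke Urban's characteristic-zero argument---is the right one, and it is exactly what the paper does. But the step you yourself flag as ``delicate'' is a genuine gap, and the paper resolves it by a different route than the local one you sketch.

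The problem with working on a general affinoid $\UU=\Spa(A,A^+)$ meeting the boundary of weight space is this: the classical algebraic weights are all characteristic-zero points, and bounds on $v_p(\phi_x(a_n))$ at those points control only $p$-divisibility of $a_n$. Near the boundary $p$ is not a unit in $A$, so $p$-divisibility says nothing about $\alpha^{-n}a_n\to 0$ for the topologically nilpotent \emph{unit} $\alpha$ that actually governs the topology of $A$. Zariski density of classical points is too coarse to recover the $\alpha$-adic topology of a boundary affinoid; the hypotheses ``Noetherian and $\Zp$-torsion free'' do not bridge this, and there is no uniform norm estimate from sections \ref{sec:mod}--\ref{sec:la} that does so either.

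The paper sidesteps this by working \emph{globally} rather than locally. One observes that $\OO_{\WW}(\WW)=\Zp\db{T'}$, and that the topology on this Iwasawa algebra is induced by the supremum of finitely many Gauss norms of $\WW\times_{\Spa(\Zp,\Zp)}\Spa(\Qp,\Zp)$. Every Gauss point is a characteristic-zero point, so to prove $P_f\in\Zp\db{T'}\fs{X}$ it suffices to show that its image under each Gauss norm is Fredholm. At a characteristic-zero point one is squarely in Urban's setting \cite[Theorem 4.7.3(iii)]{Urb11}, and Proposition \ref{prop:interpcusp} supplies the needed cuspidal interpolation. Thus the passage from classical-weight data to the full Fredholm estimate is made not by Zariski density on an arbitrary affinoid, but by the special fact that finitely many characteristic-zero Gauss norms already generate the topology of the global ring of sections.
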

\begin{proof}
Observe that $\OO_{\WW}(\WW) = \OO_{\WW}^+(\WW) = \Zp \db{T'}$.
Let $T'_{\mathrm{tf}}$ be a maximal torsionfree subgroup of $T'$.
The topology on $\Zp \db{T'_{\mathrm{tf}}}$ is induced by any norm corresponding
to a Gauss point of the wide open polydisc
$\Spa(\Zp \db{T'_{\mathrm{tf}}}, \Zp \db{T'_{\mathrm{tf}}}) \times_{\Spa(\Zp,\Zp)} \Spa(\Qp,\Zp)$.
Similarly, the topology on $\Zp \db{T'}$ is induced by
a supremum of a finite collection of norms corresponding to Gauss points of
$\WW \times_{\Spa(\Zp,\Zp)} \Spa(\Qp,\Zp)$.
So it suffices to check that the restrictions of $P_{f}(X)$
to Gauss points of $\WW$ are Fredholm series.
The Gauss points are characteristic zero points, so we may apply
the argument of \cite[Theorem 4.7.3iii]{Urb11} along with Proposition
\ref{prop:interpcusp}.
\end{proof}

We will write $P(X)$ for $P_{u_t}(X)$.
We define the spectral variety $\ZC \subset \WW \times \AD^1$ to be the zero
locus of $P(X)$,
and we define $w \colon \ZC \to \WW$ to be the projection.
We also define
\[ P_+(X) \colonequals \prod_i \det \left( 1-Xu_t \middle| C^{i}_{G,K^p,\lambda,\cusp} \right) \,. \]

\subsection{Weight space and its its characteristic zero subspace}

Before constructing the eigenvariety, we will prove a result that will
allow us to deduce information about the behavior of the eigenvariety
at the boundary from the characteristic zero part of the eigenvariety.
\begin{lem} \label{lem:non zero div}
Let $\UU = \Spa(A,A^+)$ by an affinoid adic space.  Assume that
$A$ is finitely generated over a Noetherian ring of definition.
Let $a \in A$ be an element that is not a zero divisor.
\begin{enumerate}
\item For any open $\VV \subseteq \UU$, $a$ is not a zero divisor in
$\OO_{\UU}(\VV)$.
\item Assume $A$ is Tate.  There exists rational subset $\VV \subseteq \UU$
such that the restriction $A \to \OO_{\UU}(\VV)$ is injective
and $a \in \OO_{\UU}(\VV)^{\times}$.
\end{enumerate}
\end{lem}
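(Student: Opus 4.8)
Both parts reduce to standard facts about rational localizations. The hypothesis that $A$ is finitely generated over a Noetherian ring of definition means that $A\langle S_1,\dots,S_m\rangle$ is Noetherian for all $m$, so that $\OO_{\UU}$ is a sheaf and, for every rational subset $W\subseteq\UU$, the map $A\to\OO_{\UU}(W)$ is flat; I will take these for granted (\cite{Hub96}, \cite{berkeley}).

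For (1), I would first do the case $\VV=W$ rational: tensoring the exact sequence $0\to A\xrightarrow{\cdot a}A$ with the flat $A$-algebra $\OO_{\UU}(W)$ shows that multiplication by $a$ is injective on $\OO_{\UU}(W)$. For a general open $\VV$, cover it by rational subsets $W_i\subseteq\UU$; the sheaf axiom gives an injection $\OO_{\UU}(\VV)\hookrightarrow\prod_i\OO_{\UU}(W_i)$, and if $ab=0$ with $b\in\OO_{\UU}(\VV)$ then $b$ dies in each $\OO_{\UU}(W_i)$, hence $b=0$.

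For (2), choose a topologically nilpotent unit $\varpi\in A$ (possible since $A$ is Tate) and for $n\ge 0$ set
\[ \VV_n\colonequals\UU\bigl(\tfrac{\varpi^n}{a}\bigr)=\bigl\{\,x\in\UU : |\varpi^n(x)|\le|a(x)|\ne 0\,\bigr\}. \]
This is a rational subset, because $\varpi^n$ is a unit, so the ideal $(\varpi^n,a)=A$ is open; and $a$ is invertible in $\OO_{\UU}(\VV_n)=A\langle\tfrac{\varpi^n}{a}\rangle$ by the universal property of rational localizations. So it is enough to choose $n$ with $\VV_n\ne\emptyset$ and with $A\to\OO_{\UU}(\VV_n)$ injective, and then take $\VV=\VV_n$. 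Since $A\langle S\rangle$ is Noetherian, $\OO_{\UU}(\VV_n)=A\langle S\rangle/(aS-\varpi^n)$ (the ideal is already closed), and expanding a relation $x=(aS-\varpi^n)\sum_{k\ge 0}c_kS^k$ in $A\langle S\rangle$ coefficientwise forces $c_k=-\varpi^{-n(k+1)}a^kx$, so the kernel of $A\to\OO_{\UU}(\VV_n)$ is
\[ K_n\colonequals\bigl\{\,x\in A : \varpi^{-nk}a^kx\to 0\text{ in }A\text{ as }k\to\infty\,\bigr\}. \]
Because $a$ is a non-zero-divisor, $|a|$ is not identically zero on $\UU$, so $\VV_n\ne\emptyset$ once $n$ is large; the real content is that $K_n=0$ for $n\gg 0$. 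For $x\in K_n$ the series $z\colonequals\sum_{k\ge 0}\varpi^{-nk}a^kx$ converges in the complete ring $A$, satisfies $(1-\varpi^{-n}a)z=x$, and again lies in $K_n$; iterating shows $x\in\bigcap_{m\ge 0}(\varpi^n-a)^mA$. By Krull's intersection theorem ($A$ being Noetherian) this intersection is annihilated by some element $1+(\varpi^n-a)c$ with $c\in A$, and I would then conclude it is zero using that $a$ avoids every associated prime of $A$ together with the fact that $\varpi^n$ is topologically nilpotent.

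The step I expect to be the main obstacle is precisely this vanishing of $K_n$: ruling out a nonzero $x\in A$ with $\varpi^{-nk}a^kx\to 0$ for all $k$ once $n$ is large. I would attack it through the Krull-intersection reduction above — reducing, via a primary decomposition of $(0)$, to controlling $1+(\varpi^n-a)c$ modulo each of the finitely many associated primes of $A$, where the non-zero-divisor hypothesis on $a$ and the topological nilpotence of $\varpi^n$ make the needed nonvanishing visible. The remaining bookkeeping — that the kernel computed from the power-series expansion really is $K_n$, and that one $n$ can be chosen to satisfy both constraints simultaneously — is routine.
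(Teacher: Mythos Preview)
Your treatment of (1) is correct and is the paper's argument: reduce to rational $\VV$ via the sheaf axiom, then use that rational localization is flat.

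For (2), the computation of $K_n=\{x:\varpi^{-nk}a^kx\to 0\}$ is right, and so is the inclusion $K_n\subseteq\bigcap_m(\varpi^n-a)^mA$. The gap is the last step: the Krull intersection $\bigcap_m(\varpi^n-a)^mA$ need not vanish for any $n$, because $\varpi^n-a$ may simply be a unit in $A$. Already for $A=\Qp$, $\varpi=p$, and any nonzero $a$, the element $p^n-a$ is a unit for all but at most one $n$, so the intersection is all of $\Qp$; yet $K_n=0$ once $n$ is large enough, so your bound has thrown away all the information. The contradiction you propose---from $1+(\varpi^n-a)c\in\mathfrak p$ for some associated prime $\mathfrak p$---only says that $\varpi^n-a$ is a unit in $A/\mathfrak p$, and neither ``$a$ is a non-zero-divisor'' nor ``$\varpi^n$ is topologically nilpotent'' prevents that.

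The paper works instead inside a Noetherian ring of definition $A_0$ containing $a$ and a topologically nilpotent unit $\alpha$. Artin--Rees for the ideals $aA_0$ and $\alpha A_0$ gives $k$ with $\alpha^nA_0\cap aA_0\subseteq\alpha^{n-k}aA_0$ for $n\ge k$; one then takes $\VV=\{|a|\ge|\alpha^k|\}$ and checks directly that anything dying in $\OO_{\UU}(\VV)$ lies in $\bigcap_n\alpha^nA_0=0$. In terms of your description, Artin--Rees in $A_0$ yields the linear bound $v_\alpha(a^jx)\le v_\alpha(x)+jk$ on the $\alpha$-adic valuation (using that $a$ is a non-zero-divisor), and this forces $K_n=0$ for $n>k$. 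That linear control of the $\alpha$-adic filtration under multiplication by $a$ is exactly what Krull intersection in $A$ cannot see.
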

\begin{proof}
To prove the first item, it suffices to consider the case where
$\VV$ is a rational subset.  Then $\OO_{\UU}(\UU)$ is flat over $A$
by \cite[Corollary 1.7(i)]{Hub93}, and $\OO_{\UU}(\VV)$ is flat over
$\OO_{\UU}(\UU)$ by \cite[Proposition 1.6.7(i), Lemma 1.7.6]{Hub96}.
Since the multiplication-by-$a$ map is injective on $A$, it must
be injective on $\OO_{\UU}(\VV)$ as well.

To prove the second item, choose a Noetherian ring of definition $A_0 \subset A$
and a topologically nilpotent $\alpha \in A^{\times} \cap A_0$.
After multiplying $a$ by a power of $\alpha$, we may assume $a \in A_0$.
By the Artin-Rees lemma, there exists an integer $k \ge 1$ so that
$\alpha^n A_0 \cap a A_0 \subseteq \alpha^{n-k} aA_0$
for all $n \ge k$.  Let $\VV \subset \UU$ be the rational subset defined
by the inequality $|a| \ge |\alpha^k|$.
We claim that $A \to \OO_{\UU}(\VV)$ is injective.

The ring $\OO_{\UU}(\VV)$ is the $\alpha$-adic completion
of $A_0[1/a]$, and the completion of $A_0[\alpha^k/a]$ is a ring of
definition.
Let $b \in A$, and suppose the image of $b$ in $\OO_{\UU}(\VV)$ is zero.
Then for each $n \in \NN$, $b \in \alpha^n A_0[\alpha^k/a]$.
Then there exists $m \in \NN$ so that
$a^m b \in \alpha^n (a,\alpha^k)^{m} A_0$.
One can then show by induction on $m$ that
$b \in \alpha^n A_0$.
Since $A_0$ is $\alpha$-adically separated, this
implies $b=0$.
\end{proof}
\begin{cor} \label{cor:non zero div}
Let $\UU = \Spa(A,A^+)$ be an open affinoid subspace of $\WW$,
with $A$ complete Tate.
Then $p$ is not a zero divisor of $\OO_{\WW}(\UU)$, and there
exists a rational subset $\VV \subseteq \UU$ such that
$A \to \OO_{\WW}(\VV)$ is injective and $p \in \OO_{\WW}(\VV)^{\times}$.
\end{cor}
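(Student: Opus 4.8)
The plan is to get this directly from Lemma~\ref{lem:non zero div}, applied to $A=\OO_{\WW}(\UU)$ with $a=p$: its two conclusions are exactly the two assertions of the corollary. So the entire argument reduces to checking the two hypotheses of that lemma for the pair $(A,p)$, namely that $A$ is finitely generated over a Noetherian ring of definition and that $p$ is not a zero divisor in $A$.

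For the first hypothesis I would argue structurally, using the shape of weight space. Write $R=\Zp\db{T'}$, so $\WW=\Spa(R,R)^{\an}$. Since $T'$ is a quotient of the compact group $T_0$, it is a compact commutative $p$-adic analytic group, so $R$ is finite as a module over a power series ring $\Zp[[X_1,\dotsc,X_a]]$ (with $a=\dim T'$); in particular $R$ is Noetherian, and in fact strongly Noetherian, because for each $n$ the ring $R\langle X_1,\dotsc,X_n\rangle$ is the completion of the Noetherian ring $R[X_1,\dotsc,X_n]$ along a finitely generated ideal. Granting this, $\Spa(R,R)$ is an adic space with sheafy structure sheaf, and every affinoid open subspace of it --- in particular $\UU$, which is affinoid open in $\WW$ and hence in $\Spa(R,R)$ --- has ring of sections topologically of finite type over a Noetherian ring of definition. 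I expect this to be the step carrying the most weight: it is not special to the corollary but rather a general fact about weight space as an adic space (strong Noetherianness of $\Zp\db{T'}$, together with the inheritance of the ``finite type over a Noetherian ring of definition'' property by affinoid opens).

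For the second hypothesis I would first observe that $R=\Zp\db{T'}$ is $p$-torsionfree: it is the inverse limit over open subgroups $U\le T'$ of the $\Zp$-free group rings $\Zp[T'/U]$, so an element killed by $p$ has every component zero and hence is itself zero. Thus $p$ is a non-zero-divisor in $R$. Now cover $\UU$ by rational subsets $\VV_j$ of $\Spa(R,R)$ contained in $\UU$ (rational subsets form a basis of the topology, and each $\VV_j$ automatically lies in $\WW$). By \cite[Corollary 1.7(i)]{Hub93} the ring $\OO_{\WW}(\VV_j)$ is flat over $R$, so $p$ is a non-zero-divisor there; hence if $b\in A$ satisfies $pb=0$ then $b|_{\VV_j}=0$ for every $j$, and since $\OO_{\WW}$ is a sheaf on $\UU$ this forces $b=0$.

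With both hypotheses verified, Lemma~\ref{lem:non zero div}(1) gives (taking $\VV=\UU$) that $p$ is not a zero divisor in $A$, and Lemma~\ref{lem:non zero div}(2) produces a rational subset $\VV\subseteq\UU$ with $A\to\OO_{\WW}(\VV)$ injective and $p\in\OO_{\WW}(\VV)^{\times}$, which is exactly what is claimed.
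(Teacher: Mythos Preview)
Your approach is correct and matches what the paper intends: the corollary is stated without proof because it follows directly from Lemma~\ref{lem:non zero div}, and you apply that lemma after carefully checking its hypotheses (Noetherian ring of definition for $A$, and $p$ a non-zero-divisor in $A$). One small wrinkle: in your final paragraph, invoking part~(1) of the lemma with $\VV=\UU$ is vacuous, since that instance is exactly the hypothesis ``$a$ is not a zero divisor in $A$'' rather than a conclusion --- the first assertion of the corollary is really what you established directly in the preceding paragraph via flatness of rational localizations over $R=\Zp\db{T'}$ and the sheaf property. A slightly cleaner route to that same conclusion is to apply part~(1) of the lemma one level up, to the ambient affinoid $\Spa(R,R)$ (where $R$ is its own Noetherian ring of definition and is visibly $p$-torsionfree), with $\VV=\UU$; this is presumably what the paper has in mind, and it avoids duplicating the flatness argument already inside the lemma's proof.
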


\subsection{Pieces of the eigenvariety}

Now we construct the individual pieces of the eigenvariety.
Let $z \in \ZC$.  By \cite[Corollaire B.1]{AIP15}, there exists
an open affinoid neighborhood $\UU = \Spa(A,A^+)$ of $w(z)$
and a factorization $P_+(X)=Q_+(X) S_+(X)$, with
$Q_+(X) \in A[X]$, $S_+(X) \in A \fs{X}$,
such that $Q_+(X)$ and $S_+(X)$ are relatively prime,
$Q_+$ vanishes at $x$, and the leading coefficient of $Q_+$ is invertible.
The factorization of $P_+$ induces a factorization
$P(X)=Q(X) S(X)$ satisfying similar properties.
The factorization also determines a sub-complex $N^{\bullet}$
of $C^{\bullet}_{G,K^p,\lambda,\cusp}$, as described in sections
\ref{sec:auto-fredholm} and \ref{sec:auto-cuspidal}.

\begin{prop} \label{prop:detexists}
Let $D^+$ be the determinant associated with the action of
$\HH_G \otimes_{\Zp} A$ on $\bigoplus_{i \equiv d/2 (2)} N^i$,
and let $D^-$ be the determinant associated with the action of
$\HH_G \otimes_{\Zp} A$ on $\bigoplus_{i \equiv d/2+1 (2)} N^i$.
Then there exists a determinant $D$ so that $D^+=D^- D$.
\end{prop}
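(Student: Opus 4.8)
The plan is to reduce to the case where $p$ is invertible in $A$ and then to verify the criterion of Lemma~\ref{lem:detratio} on a Zariski-dense set of classical points. By Corollary~\ref{cor:non zero div} there is a rational subset $\VV\subseteq\UU$ such that the restriction $A\hookrightarrow A'\colonequals\OO_{\WW}(\VV)$ is injective and $p\in(A')^{\times}$. The base change $A\to A'$ is flat, so the factorization $P_+=Q_+S_+$ and the induced decomposition $C^{\bullet}_{G,K^p,\lambda,\cusp}=N^{\bullet}\oplus F^{\bullet}$ of \cite[Th\'eor\`eme B.2]{AIP15} are preserved, and $D^{\pm}\otimes_A A'$ are the determinants attached to the actions of $\HH_G\otimes_{\Zp}A'$ on $\bigoplus_{i\equiv d/2\,(2)}N^i\otimes_A A'$ and $\bigoplus_{i\equiv d/2+1\,(2)}N^i\otimes_A A'$. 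By Corollary~\ref{det injective} it therefore suffices to produce the determinant over $A'$. I thus replace $A$ by $A'$ and assume henceforth that $p\in A^{\times}$; passing to connected components if necessary so that the $N^i$ have well-defined ranks $d_{\pm}$, and gluing the resulting determinants at the end as in \cite{Che14}. Note that $A$ is now reduced, since the locus in $\WW$ where $p$ is invertible is a disjoint union of smooth $\Qp$-rigid spaces and reducedness passes to rational localizations.

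Next I apply criterion~(3) of Lemma~\ref{lem:detratio}: it suffices to show that for every $n$ and all $r_1,\dotsc,r_n\in\HH_G\otimes_{\Zp}A$ the quotient
\[
q\colonequals D^+_{A[X_1,\dotsc,X_n]}(1+X_1r_1+\dotsb+X_nr_n)\,\big/\,D^-_{A[X_1,\dotsc,X_n]}(1+X_1r_1+\dotsb+X_nr_n)
\]
lies in $A[X_1,\dotsc,X_n]$ and has total degree at most $d\colonequals d_+-d_-$. Since the denominator has constant term $1$, the quotient $q$ exists a priori as an element of $A\db{X_1,\dotsc,X_n}$, and the coefficient of each monomial of $q$ is a universal integer polynomial in the coefficients of the numerator and denominator, hence an element of $A$; I must show that the coefficients of all monomials of degree $>d$ vanish. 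Because $A$ is reduced, this may be checked on any Zariski-dense subset of $\Spa(A,A^+)$, and I take the set of points attached to regular dominant algebraic weights $\mu$ with $p^N\mid N(\mu,t)$ for $N$ arbitrarily large, which is Zariski-dense in $\WW$ and hence in $\VV$. It remains to show: for each such point $x$, with residue field $F=k(x)$ and associated weight $\mu$, one has $D^+_F=D^-_F\cdot D_x$ for a determinant $D_x$ (necessarily of dimension $d$, so that in particular $d\ge 0$ and the hypotheses of Lemma~\ref{lem:detratio} are met).

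Fix such a point $x$. Over the field $F$ every finitely generated module has a composition series and determinants are multiplicative in short exact sequences, so if the Euler-characteristic class $\sum_i(-1)^{i-d/2}[N^i\otimes_A F]\in K_0(\HH_G\otimes_{\Zp}F)$ is \emph{effective}, i.e.\ equal to $[V]$ for an honest $\HH_G\otimes_{\Zp}F$-module $V$, then $D^+_F=D^-_F\cdot D_V$ with $D_V$ of dimension $\dim_F V=d$. The complex $N^{\bullet}\otimes_A F$ is the summand of $C^{\bullet}_{G,K^p,\mu,\cusp}$ cut out by $Q_+$; arguing as in the proofs of Lemma~\ref{lem:interp} and Proposition~\ref{prop:interpcusp} (that is, as in \cite[\S 4.6--4.7]{Urb11}, now using the corrected Eisenstein analysis of section~\ref{sec:eisenstein}), the action of $\HH_G$ on this Euler characteristic agrees, modulo $N(\mu,t)$ with respect to a suitable $\OO_F$-lattice, with the action of $\HH_G$ on $H^{\bullet}_!(S_G(K),L^G_{\mu})$: Proposition~\ref{prop:interpcusp} removes the Eisenstein contributions, and the $p$-divisibility of $N(\mu,t)$ places every slope occurring in $Q_+$ in the non-critical range, so that the control theorem (available because $\mu$ is regular) identifies the $Q_+$-part of the overconvergent cohomology with classical cohomology. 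Since $G(\RR)$ has discrete series and $\mu$ is regular, $H^{\bullet}_!(S_G(K),L^G_{\mu})$ is concentrated in the single degree $d/2$, so its class is genuinely effective; hence $\sum_i(-1)^{i-d/2}[N^i\otimes_A F]$ is effective modulo $p^N$ for every $N\le v_p(N(\mu,t))$, and letting $N\to\infty$ yields the vanishing at $x$ of all coefficients of $q$ in degree $>d$. By Lemma~\ref{lem:detratio} this produces the desired determinant $D$.

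The formal reductions in the first two paragraphs are routine; the heart of the matter is the classical-point analysis, specifically upgrading the congruence of characteristic power series in Proposition~\ref{prop:interpcusp} to a congruence of full $\HH_G$-determinants modulo $N(\mu,t)$ and tracking slopes through the control theorem, so that the concentration of classical cuspidal cohomology of regular weight in the middle degree transfers to the finite-slope Hecke modules up to arbitrarily high powers of $p$. This is in essence Urban's characteristic-zero argument, and the main subtlety beyond it is that $C^{\bullet}_{G,K^p,\mu,\cusp}$ only matches $H^{\bullet}_!(S_G(K),L^G_{\mu})$ up to high powers of $p$ rather than exactly, so the limiting argument over $N$ cannot be avoided.
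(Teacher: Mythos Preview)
Your reduction to characteristic zero via Corollary~\ref{cor:non zero div} and Corollary~\ref{det injective} is exactly what the paper does. The divergence is in how you handle the characteristic-zero ring $A'=\OO_{\WW}(\VV)$. The paper does not restrict to classical points: it uses that $\VV$ is a reduced rigid space, so $A'$ injects into $\prod_x k_x$ over \emph{all} rigid analytic points $x$, and at each such $x$ it simply invokes \cite[Lemma~4.1.12 and Theorem~4.7.3(iii)]{Urb11} (the difference of the two pseudocharacters is a pseudocharacter) together with \cite[Proposition~1.27]{Che14}, then applies Corollary~\ref{det injective} once more. That is the whole proof.

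Your third paragraph, by contrast, tries to re-establish Urban's effectivity result at classical points, and this is where the gap is. The sentence ``effective modulo $p^N$ for every $N\le v_p(N(\mu,t))$, and letting $N\to\infty$'' does not make sense: at a \emph{fixed} point $x$ the quantity $v_p(N(\mu,t))$ is a fixed finite number, so there is no limit to take. What you actually need is an \emph{exact} identification of $[N^{\bullet}\otimes_A F]$ with an effective class once the slopes in $Q_+$ lie below the non-critical bound; this is available, but it requires the full $\HH_G$-equivariant control theorem together with an inductive analysis of the Levi correction terms in $C^{\bullet}_{G,K^p,\mu,\cusp}$, not merely the congruence of characteristic power series in Proposition~\ref{prop:interpcusp} (which concerns only $u_t$, not the whole Hecke algebra, and only the power series, not the $K_0$-class). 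That analysis is precisely the content of \cite[Theorem~4.7.3(iii)]{Urb11}, so the cleanest repair is to cite it directly---at which point you no longer need to single out classical weights at all, and your argument collapses to the paper's.
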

\begin{proof}
Let $R = \HH_G \otimes_{\Zp} A$.
As in Lemma \ref{lem:detratio}, if $B$ is an $A$-algebra and $r \in R \otimes_A B$
such that $D_B^-(r)$ is not a zero divisor and the ratio $D_B^+(r)/D_B^-(r)$ exists in $B$, we write
$F_B(r)$ for this ratio.
Let $d_+$ and $d_-$ be the dimensions of $D^+$ and $D^-$, respectively,
and let $d=d_+-d_-$.

By Corollary \ref{cor:non zero div},
we can find a rational subset $\VV \subset \UU$ so that the restriction
$A \to \OO_{\WW}(\VV)$ is injective and $p$ is invertible on $\VV$.
Since $\VV$ is a reduced rigid space, the natural map
$\OO_{\WW}(\VV) \to \prod_{x} k_x$ is injective, where the product runs
over rigid analytic points $x \in \VV$ and $k_x$ is the residue field of $x$.
For each $x$,
write $D^+|_{k_x}$ (resp.\ $D^-|_{k_x}$) for the base change of
$D^+$ (resp.\ $D^-$) along $A \to k_x$.
By \cite[Lemma 4.1.12 and Theorem 4.7.3iii]{Urb11},
the difference of the pseudocharacters corresponding to
$D^+|_{k_x}$ and $D^-|_{k_x}$ is again a pseudocharacter.
By the equivalence of
pseudocharacters and determinants in characteristic zero
\cite[Proposition 1.27]{Che14}, there exists a determinant $D|_{k_x}$
satisfying $D^+|_{k_x} = D^-|_{k_x} D|_{k_x}$.
Then by Corollary \ref{det injective}, there exists
a determinant $D$ satisfying $D^+ = D^- D$.
\end{proof}

Let
\[ h_{\UU,Q_+} \colonequals (\HH_G \otimes_{\Zp} A)/\ker(D) \,. \]
We will use the extension $A \to h_{\UU,Q_+}$ to construct
an adic space $\EE_{\UU,Q_+}$ over $\UU$.
\begin{lem}
The ring $h_{\UU,Q_+}$ is a finite $A$-module.
\end{lem}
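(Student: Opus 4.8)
The plan is to use the factorization $D^+ = D^- D$ supplied by Proposition \ref{prop:detexists} to bound $\ker(D)$ from below by an ideal with an obviously finite quotient over $A$.

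Write $R = \HH_G \otimes_{\Zp} A$, and let $\rho^+ \colon R \to \End_A(M^+)$ and $\rho^- \colon R \to \End_A(M^-)$ be the homomorphisms defining $D^+$ and $D^-$, where $M^+$ and $M^-$ are the finitely generated projective $A$-modules $\bigoplus_{i \equiv d/2 (2)} N^i$ and $\bigoplus_{i \equiv d/2+1 (2)} N^i$. Since $A$ is Noetherian and the $M^{\pm}$ are finitely generated, $R/\ker(\rho^{\pm})$ embeds into the finite $A$-module $\End_A(M^{\pm})$ and is therefore itself a finite $A$-module; consequently $R/(\ker(\rho^+) \cap \ker(\rho^-))$, which embeds into $(R/\ker \rho^+) \oplus (R/\ker \rho^-)$, is also a finite $A$-module.

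It then suffices to prove the inclusion $\ker(\rho^+) \cap \ker(\rho^-) \subseteq \ker(D)$, for then $h_{\UU,Q_+} = R/\ker(D)$ is a quotient of the finite $A$-module $R/(\ker(\rho^+) \cap \ker(\rho^-))$. To check the inclusion, take $r_0 \in \ker(\rho^+) \cap \ker(\rho^-)$, a commutative $A$-algebra $B$, and $r' \in R \otimes_A B$. Because $\rho^{\pm}$ are ring homomorphisms with $\rho^{\pm}(r_0) = 0$, the element $r' r_0$ acts as $0$ on $M^{\pm} \otimes_A B$ (indeed $\rho^{\pm}_B(r'(r_0 \otimes 1)) = \rho^{\pm}_B(r') \circ (\rho^{\pm}(r_0) \otimes \id_B) = 0$), so $1 + r' r_0$ acts as the identity. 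Hence, using the description of $D^{\pm}$ from Example \ref{module determinant}, $D^{\pm}_B(1 + r' r_0) = \det(1 + r' r_0 \mid M^{\pm} \otimes_A B) = 1$, and the relation $D^+ = D^- D$ forces $D_B(1 + r' r_0) = 1$. As $B$ and $r'$ were arbitrary, $r_0 \in \ker(D)$.

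The only step with any content is this last inclusion, and even there the argument is formal once one remembers that the vanishing of $\rho^{\pm}(r_0)$ is stable under base change and multiplication; I do not anticipate a genuine obstacle.
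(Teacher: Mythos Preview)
Your proof is correct and follows the same route as the paper's: both arguments show that $\ker(\rho^+)\cap\ker(\rho^-)\subseteq\ker(D)$ and then conclude by embedding $R/(\ker(\rho^+)\cap\ker(\rho^-))$ into $\bigoplus_i \End_A(N^i)$. The paper compresses this into a single sentence (``$\ker(D)$ contains any operator that annihilates $N^{\bullet}$''), whereas you spell out the verification via the factorization $D^+=D^-D$; the content is identical.
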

\begin{proof}
Since $\ker(D)$ contains any operator that
annihilates $N^{\bullet}$, $h_{\UU,Q_+}$ can be identified with a subquotient of
$\bigoplus_i \End N^i$.  In particular, $h_{\UU,Q_+}$ must be finitely generated as an
$A$-module.
\end{proof}
We give $h_{\UU,Q_+}$ the ``$A$-module topology'' defined in
\cite[Section 2]{Hub94}.
\begin{lem}
The ring $\hu$ is Tate and has a Noetherian ring of definition.
\end{lem}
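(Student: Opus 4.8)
The plan is to construct a ring of definition of $\hu$ directly out of a Noetherian ring of definition of $A$, using crucially the fact just established that $\hu$ is finite as an $A$-module (and is an $A$-algebra, being a quotient of $\HH_G \otimes_{\Zp} A$). First I would fix a Noetherian ring of definition $A_0 \subseteq A$, which exists since $A = \OO_{\WW}(\UU)$ is finitely generated over such a ring (this is exactly the hypothesis used for $A$ around Corollary \ref{cor:non zero div}); and, since $A$ is Tate, after the usual rescaling construction of Huber (see \cite{Hub93}), which enlarges $A_0$ by finitely many elements and so preserves Noetherianness, I may assume $A_0$ has an ideal of definition of the form $\varpi A_0$ for a topologically nilpotent unit $\varpi \in A_0$ of $A$. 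Then $\{\varpi^n A_0\}_{n \in \NN}$ is a basis of neighborhoods of $0$ in $A$.

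Next I would choose $A$-module generators $1 = y_0, y_1, \dotsc, y_m$ of $\hu$, write each product $y_i y_j$ as an $A$-linear combination of the $y_k$, and rescale: replacing $y_i$ ($i \ge 1$) by $z_i = \varpi^N y_i$ for $N$ large forces all the relevant structure constants into $A_0$. Since $\varpi$ is a unit, the $z_i$ together with $z_0 = 1$ still generate $\hu$ over $A$, and $\huo \colonequals \sum_i A_0 z_i$ becomes a subring of $\hu$ containing $1$. Using the surjection $\phi \colon A^{m+1} \to \hu$ sending the standard basis to the $z_i$ --- which by definition induces the $A$-module topology on $\hu$ --- together with the fact that $\phi$ is an open quotient map of topological groups, I get that $\{\varpi^n \huo\}_{n \in \NN}$ (the images of the sets $(\varpi^n A_0)^{m+1}$) is a basis of neighborhoods of $0$ in $\hu$. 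It follows at once that $\huo$ is open; it is bounded because $\varpi^n \huo \cdot \huo \subseteq \varpi^n \huo$; and it is Noetherian because it is finitely generated over the Noetherian ring $A_0$. Hence $\huo$ is a Noetherian ring of definition of $\hu$, and $\varpi \huo$ --- an honest ideal of $\huo$ since $\varpi \in A_0$, principal hence finitely generated, with $(\varpi \huo)^n = \varpi^n \huo$ --- is an ideal of definition.

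Finally, to see that $\hu$ is Tate, I would observe that $\varpi$ maps under the continuous ring homomorphism $A \to \hu$ to a topologically nilpotent element of $\hu$, and that this image is a unit since $\varpi \in A^{\times}$; as $\varpi \huo$ is an ideal of definition of $\huo$ generated by this element, $\hu$ is a Tate ring.

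I expect the only slightly delicate points to be the preparatory manoeuvre of the first paragraph (arranging $A_0$ to be simultaneously Noetherian and to carry a principal ideal of definition cut out by a topologically nilpotent unit) and the appeal to Huber's $A$-module topology --- in particular the fact that, being the quotient topology for any chosen finite presentation, it is correctly computed from $\phi$, so that the neighborhood basis $\{\varpi^n \huo\}$ is the right one. Both are routine given Huber's treatment of finite modules over f-adic rings; the genuine content here, finiteness of $\hu$ over $A$, is already in hand.
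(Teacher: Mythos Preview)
Your proposal is correct and follows essentially the same approach as the paper: choose $A$-module generators of $\hu$, rescale them by a power of a topologically nilpotent unit so that the structure constants land in a Noetherian ring of definition $A_0$, and take $\huo$ to be the $A_0$-span of $1$ together with the rescaled generators. Your write-up is somewhat more explicit than the paper's about why $A_0$ can be taken Noetherian with a principal ideal of definition and about why the resulting $\huo$ is open with the correct induced topology, but these are elaborations of the same argument rather than a different route.
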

\begin{proof}
Choose $A$-module generators $a_1,\dotsc,a_n$ of $\hu$.  Choose $m_{ijk} \in \hu$
so that for each $i,j$, $a_i a_j = \sum_{k=1}^n m_{ijk} a_k$.
Let $A_0$ be a ring of definition of $A$, and let $\alpha$ be a topologically nilpotent unit of $A$
contained in $A_0$.
There exists an integer $\ell$ so that $\alpha^{\ell} m_{ijk} \in A_0$ for all $i,j,k$.
Let $\huo$ be the $A_0$-submodule of $\hu$ generated by $1,\alpha^{\ell} a_1,\dotsc,\alpha^{\ell} a_n$;
then $\huo$ is an open subring of $\hu$.  Then $\huo$ is Noetherian since $A_0$ is Noetherian, and $\huo$
inherits the $\alpha$-adic topology from $A_0$.  So $\hu$ has a Noetherian ring of definition and is Tate.
\end{proof}
Let $\hu^+$ be the normal closure of $A^+$ in $\hu$.
Then $(\hu,\hu^+)$ is a Huber pair.
We define $\EE_{\UU,Q_+} \colonequals \Spa(h_{\UU,Q_+},h_{\UU,Q_+}^+)$.

Since $Q^*(X)$ is the characteristic polynomial of $u$ acting on $N^{\bullet}$,
it follows from \cite[Lemma 1.12iv]{Che14} that $Q^*(u)$ is in
$\ker(D)$, and so there is a canonical map $\EE_{\UU,Q_+} \to \ZC$.

\subsection{Gluing}

We will glue the $\EE_{\UU,Q_+}$ as in \cite[Section 5]{Buz07}.  We need the
following lemma to verify that the pieces can be glued.

\begin{lem} \label{lem:gluing}
If $\UU' \subset \UU$ are affinoid subspaces of $\WW$, then there is a canonical isomorphism
$\EE_{\UU',Q_+} \cong \EE_{\UU,Q_+} \times_{\UU} \UU'$.
\end{lem}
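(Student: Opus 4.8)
The plan is to reduce the lemma to a natural isomorphism of Huber pairs and then to identify the one genuinely delicate point as a $p$-torsion-freeness statement, for which the use of determinants and the $p$-torsion-freeness of weight space (Corollary~\ref{cor:non zero div}) are exactly what is needed. Write $A'=\OO_{\WW}(\UU')$. Since $\hu$ is a finite $A$-module, the completed tensor product $\hu\,\widehat\otimes_A A'$ is just $\hu\otimes_A A'$ (a finite, hence complete, $A'$-module), so the fibre product $\EE_{\UU,Q_+}\times_{\UU}\UU'$ of affinoid adic spaces is $\Spa(\hu\otimes_A A',C^+)$, where $C^+$ is the integral closure of the image of $\hu^+\otimes_{A^+}\OO_{\WW}^+(\UU')$; on the other side, $h_{\UU',Q_+}^+$ is by construction the normal closure of $\OO_{\WW}^+(\UU')$ in $h_{\UU',Q_+}$, which will agree with $C^+$ once the underlying ring isomorphism is produced. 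So the lemma reduces to constructing a canonical topological ring isomorphism $\hu\otimes_A A'\isom h_{\UU',Q_+}$.

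Next I would check that all the data defining $h_{\UU',Q_+}$ is the base change along $A\to A'$ of the data defining $\hu$. The tautological character over $\UU'$ is $\lambda$ followed by $A\to A'$; the complexes $C^{\bullet}_{G,K^p,\lambda,\cusp}$ are functorial in the coefficient ring; the Riesz decomposition $C^{\bullet}_{G,K^p,\lambda,\cusp}=N^{\bullet}\oplus F^{\bullet}$ of Section~\ref{sec:auto-fredholm} is cut out by a projector built from $u_t$ and the coefficients of $Q_+$, all defined over $A$; and each $N^i$ is finite over $A$, so $N^{\bullet}\otimes_A A'$ is the analogous finite summand over $A'$, equivariantly for $\HH_G\otimes_{\Zp}A'$. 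Functoriality of the module determinants of Example~\ref{module determinant} then gives that the determinants over $A'$ are $D^{+}\otimes_A A'$ and $D^{-}\otimes_A A'$, and $D\otimes_A A'$ is a determinant satisfying $(D^+\otimes_A A')=(D^-\otimes_A A')(D\otimes_A A')$, so by the uniqueness clause of Lemma~\ref{lem:detratio} it is the determinant $D'$ used to define $h_{\UU',Q_+}$. Since $r\in\ker D$ forces $r\in\ker(D\otimes_A A')=\ker D'$, the surjection $\HH_G\otimes_{\Zp}A'\twoheadrightarrow h_{\UU',Q_+}$ factors through $\hu\otimes_A A'$, yielding a canonical surjection $\pi\colon\hu\otimes_A A'\twoheadrightarrow h_{\UU',Q_+}$.

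The heart of the argument is that $\pi$ is injective, and this is where I would use $p$-torsion-freeness. First, $\hu$ is $p$-torsion-free: the action $\rho$ of $\HH_G\otimes_{\Zp}A$ on $N^{\bullet}$ has $\ker\rho\subseteq\ker D$, so $\hu$ is a quotient of $\operatorname{im}\rho\subseteq\bigoplus_i\End_A N^i$, which is $p$-torsion-free because $A$ is (Corollary~\ref{cor:non zero div}) and the $N^i$ are finite projective; moreover $\ker D$ is $p$-saturated in $\HH_G\otimes_{\Zp}A$, since if $pr\in\ker D$ then, substituting $X\mapsto pX$ in $D_{B[X]}(1+Xr'r)=\sum_k c_kX^k$ (which has degree $\le d$ by Lemma~\ref{lem:roby}), the relation $D_{B[X]}(1+pXr'r)=1$ forces $p^kc_k=0$ for $k\ge1$, hence $c_k=0$ over a $p$-torsion-free $B$; taking $B=A[T_1,\dots,T_m]$ with a generic $r'$ and specializing recovers $D_B(1+r'r)=1$ for every $B$ and $r'$, i.e.\ $r\in\ker D$. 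As $A'$ is flat over $A$ ($\UU'\subseteq\UU$ being an open immersion), $\hu\otimes_A A'$ is $p$-torsion-free as well. On the other hand, $\ker\pi$ is a finite $A'$-module that vanishes after inverting $p$: choosing via Corollary~\ref{cor:non zero div} a rational $\VV'\subseteq\UU'$ with $A'\hookrightarrow\OO_{\WW}(\VV')$ and $p\in\OO_{\WW}(\VV')^{\times}$, over the characteristic-zero ring $\OO_{\WW}(\VV')$ determinants coincide with pseudocharacters (\cite[Proposition~1.27]{Che14}), and both $\hu\otimes_A\OO_{\WW}(\VV')$ and $h_{\UU',Q_+}\otimes_{A'}\OO_{\WW}(\VV')$ are the section ring of a piece of Urban's characteristic-zero eigenvariety over $\VV'$ (reducing to residue fields as in the proof of Proposition~\ref{prop:detexists}), so $\pi\otimes_{A'}\OO_{\WW}(\VV')$ is an isomorphism. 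Since $A'$ is $p$-torsion-free and $A'\hookrightarrow\OO_{\WW}(\VV')$, it follows that $\ker\pi$ is supported on $V(p)$, hence is $p$-power torsion; being a submodule of the $p$-torsion-free module $\hu\otimes_A A'$, it is zero. Thus $\pi$ is a ring isomorphism, and a homeomorphism because both sides carry the $A'$-module topology; canonicity of every step supplies the cocycle compatibility needed to glue the $\EE_{\UU,Q_+}$ as in \cite[Section~5]{Buz07}. The main obstacle is precisely the $p$-saturation of $\ker D$ (equivalently, $p$-torsion-freeness of $\hu$): this is what fails for the naive Hecke-algebra image $\operatorname{im}\rho$ and is the reason for working with determinants over the $p$-torsion-free weight space.
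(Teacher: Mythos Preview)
Your approach is correct in outline but substantially more circuitous than the paper's, and one step is not fully justified. The paper argues directly: since $A$ and $A'$ are $\ZZ$-torsion-free (Corollary~\ref{cor:non zero div}), the kernel of an $A$-valued (resp.\ $A'$-valued) determinant agrees with the kernel of the associated pseudocharacter by \cite[Proposition~I.2.2.4]{Ryd08}, and the formation of the latter commutes with flat base change by \cite[Proposition~I.2.2.8]{Ryd08}. As $A\to A'$ is flat, this gives $(\ker D)A'=\ker D'$ at once, hence $\hu\otimes_A A'\cong h_{\UU',Q_+}$.

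You instead produce the surjection $\pi$ and then try to kill $\ker\pi$ by a $p$-torsion argument. The $p$-saturation of $\ker D$ is a nice direct computation (though note that your preceding clause, that $\hu$ is a \emph{quotient} of the $p$-torsion-free module $\operatorname{im}\rho$, does not by itself give $p$-torsion-freeness of $\hu$; only the saturation argument does). The genuine gap is the claim that $\pi\otimes_{A'}\OO_\WW(\VV')$ is an isomorphism. Asserting that ``both sides are the section ring of Urban's characteristic-zero eigenvariety over $\VV'$'' is precisely the base-change statement $(\ker D)\cdot\OO_\WW(\VV')=\ker(D\otimes_A\OO_\WW(\VV'))$ you are trying to prove, and the reference to Proposition~\ref{prop:detexists} concerns existence of $D$, not compatibility of kernels. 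To justify this step you would need, over the $\QQ$-algebra $\OO_\WW(\VV')$, exactly the flat-base-change result for pseudocharacter kernels that the paper invokes directly for $A\to A'$. So your route can be completed, but only by importing the paper's key input, after which the $p$-saturation and $p$-torsion-freeness bootstrap become unnecessary detours.
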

\begin{proof}
By Corollary \ref{cor:non zero div}, $p$ is not a zero divisor in
$\OO_{\WW}(\UU)$ and $\OO_{\WW}(\UU')$.
Hence $\OO_{\WW}(\UU)$ and $\OO_{\WW}(\UU')$ are torsionfree $\ZZ$-modules.
The map $\OO_{\WW}(\UU) \to \OO_{\WW}(\UU')$ is flat by
\cite[Lemma 1.7.6]{Hub96}.  By the argument of \cite[Proposition I.2.2.4]{Ryd08},
if $A$ is a ring that is a torsionfree $\ZZ$-module, then the
kernel of an $A$-valued determinant is the same as the kernel of
the associated pseudocharacter.
By \cite[Proposition I.2.2.8]{Ryd08}, the formation of the kernel of
a pseudocharacter commutes with flat base change.
So the formation of the kernel of a determinant commutes with the base change
$\OO_{\WW}(\UU) \to \OO_{\WW}(\UU')$.
Then
$h_{\UU',Q_+} \cong h_{\UU,Q_+} \otimes_{\OO_{\WW}(\UU)} \OO_{\WW}(\UU')$.
Since $h_{\UU,Q_+}$ is finite over $\OO_{\WW}(\UU')$, it follows that
$\EE_{\UU',Q_+} \cong \EE_{\UU,Q_+} \times_{\UU} \UU'$.
\end{proof}

One can also show, using essentially the same proof as
\cite[Proposition 5.3.5]{Urb11}, that if $Q_+$ and $Q_+'$ are relatively
prime, then there is a canonical isomorphism
$\EE_{\UU,Q_+ Q_+'} \cong \EE_{\UU,Q_+} \sqcup \EE_{\UU,Q_+'}$.

\begin{thm} \label{thm:eigenvariety}
The $\EE_{\UU,Q_+}$ can be glued to form an adic space $\EE$.
Furthermore, $\EE$ is equidimensional in the sense of
\cite[Definition 1.8.1]{Hub96} and the morphism $\EE \to \ZC$ is finite and surjective.
\end{thm}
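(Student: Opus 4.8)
The plan is to glue the pieces $\EE_{\UU,Q_+}$ over the spectral variety $\ZC$ by the machine of \cite[\S 5]{Buz07} (adapted to complexes as in \cite[\S B]{AIP15} and \cite{Urb11}), and then to read off finiteness, surjectivity and equidimensionality from the local description $\EE_{\UU,Q_+}=\Spa(\hu,\hu^{+})$. First I would note that each $\EE_{\UU,Q_+}$ is a genuine analytic adic space: $\hu$ is a Tate ring with a Noetherian ring of definition by the two lemmas preceding Lemma \ref{lem:gluing}, hence strongly Noetherian and therefore sheafy, so $(\hu,\hu^{+})$ is a sheafy Huber pair. One should also record that $\EE_{\UU,Q_+}$ depends only on the pair $(\UU,Q_+)$: the subcomplex $N^{\bullet}$ is canonical up to the isomorphisms established in \S\ref{sec:auto-fredholm}, the determinant $D$ with $D^{+}=D^{-}D$ is unique by Lemma \ref{lem:detratio}, and hence so are $\ker(D)$ and $\hu$.

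To glue, I would set up the eigenvariety datum consisting of $\WW$, the Fredholm series $P$, the complexes $C^{\bullet}_{G,K^p,\lambda,\cusp}$ and the algebra $\HH_G$. The slope-adapted pairs $(\UU,Q_+)$ cover $\ZC$: by \cite[Corollaire B.1]{AIP15} every point of $\ZC$ lies over an open affinoid $\UU=\Spa(A,A^{+})\subseteq\WW$ admitting a factorization $P_+=Q_+S_+$ of the required kind, and the corresponding locus $\ZC_{\UU,Q}\colonequals V(Q)\subseteq\UU\times\AD^{1}$ is finite and flat over $\UU$ since $Q$ has invertible leading coefficient. The two compatibilities needed to run Buzzard's gluing are exactly Lemma \ref{lem:gluing} (restriction along $\UU'\subseteq\UU$ with the same $Q_+$) and the identity $\EE_{\UU,Q_+Q_+'}\cong\EE_{\UU,Q_+}\sqcup\EE_{\UU,Q_+'}$ for coprime factors stated just after it: given two pairs, one passes to a common affinoid refinement $\UU''$ around a point of the overlap, factors out of $Q_+|_{\UU''}$ and $Q_+'|_{\UU''}$ their common sub-factor, and uses these statements to match the two restrictions over the corresponding $\ZC_{\UU'',\cdot}$. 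This produces the adic space $\EE$ together with a morphism $\EE\to\ZC$, which is finite because it is finite locally: $Q^{*}(u_t)\in\ker(D)$ by \cite[Lemma 1.12iv]{Che14}, so $\hu$ is a module over $\OO_{\ZC}(\ZC_{\UU,Q})=A[X]/(Q(X))$, and it is a finite $A$-module.

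For surjectivity of $\EE\to\ZC$, since the $\ZC_{\UU,Q}$ cover $\ZC$ it suffices to show $\hu\otimes_{\OO_{\ZC}}\kappa(z)\neq 0$ for every point $z$ of $\ZC_{\UU,Q}$. Writing $R\colonequals\HH_G\otimes_{\Zp}A$, functoriality of $D$ along $\OO_{\ZC}(\ZC_{\UU,Q})\to\kappa(z)$ shows that the image of $\ker(D)$ in $R\otimes\kappa(z)$ lies in $\ker(D\otimes\kappa(z))$, so $\hu\otimes\kappa(z)$ surjects onto $(R\otimes\kappa(z))/\ker(D\otimes\kappa(z))$, and the latter is nonzero because $D\otimes\kappa(z)$ is a determinant with $D_{\kappa(z)}(1)=1$, forcing $1\notin\ker(D\otimes\kappa(z))$; hence $z$ has a preimage in $\EE_{\UU,Q_+}$. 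Alternatively, since $\ZC_{\UU,Q}\to\UU$ is finite flat and the characteristic zero locus of $\WW$ is dense, one may reduce to the classically interpolated weights and invoke \cite[Theorem 4.7.3]{Urb11} together with Proposition \ref{prop:interpcusp}.

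Finally, for equidimensionality: $\WW$ is equidimensional of dimension $n\colonequals\dim\WW$ (the $\ZZ_p$-rank of $T'$), and each $\ZC_{\UU,Q}$ is finite flat over $\UU$, so $\ZC$ is equidimensional of dimension $n$ in the sense of \cite[Definition 1.8.1]{Hub96}. As $\EE\to\ZC$ is finite and surjective, $\EE$ has components of dimension $n$ and none of larger dimension; the content of the claim is that it has none of dimension $<n$, which I would deduce as in \cite[\S 5]{Urb11} by comparison with the characteristic zero eigenvariety. Concretely, by Corollary \ref{cor:non zero div} choose a rational $\VV\subseteq\UU$ with $A\hookrightarrow\OO_{\WW}(\VV)$ and $p\in\OO_{\WW}(\VV)^{\times}$; the formation of $\ker(D)$ commutes with this flat base change (proof of Lemma \ref{lem:gluing}, via the identification of $\ker(D)$ with the kernel of the associated pseudocharacter over the $\ZZ$-torsionfree ring $A$ and \cite[Proposition I.2.2.8]{Ryd08}), so $\hu\otimes_{A}\OO_{\WW}(\VV)$ is canonically the corresponding local ring of the reduced eigenvariety of \cite{Urb11}, which is equidimensional over $\OO_{\WW}(\VV)$; one then transports equidimensionality back across $A\hookrightarrow\OO_{\WW}(\VV)$, treating the components of $\EE$ lying in the characteristic $p$ boundary $\{p=0\}$ of $\WW$ separately (flatness of $\ZC$ over $\WW$ and surjectivity of $\EE\to\ZC$ force the components of $\ZC\cap\{p=0\}$ to have dimension $n$ and to be dominated by components of $\EE$). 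I expect this last point — ruling out components of $\EE$ of dimension $<n$ meeting the characteristic $p$ boundary, and checking that Huber's notion of equidimensionality survives the gluing — to be the main obstacle, and the place where the determinant formalism of Section \ref{sec:determinant} and the corrected Eisenstein analysis of Section \ref{sec:eisenstein} (through Proposition \ref{prop:detexists} and Corollary \ref{cor:non zero div}) are genuinely needed.
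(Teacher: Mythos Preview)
Your gluing, finiteness, and surjectivity arguments are essentially the paper's: Buzzard's machine together with Lemma~\ref{lem:gluing} and the coprime-factor splitting handles the gluing, finiteness is local via the finiteness of $\hu$ over $A$, and your surjectivity argument (using $D_{\kappa(z)}(1)=1$ to see $1\notin\ker(D\otimes\kappa(z))$) is a valid variant of the paper's, which instead invokes the surjectivity of $\operatorname{Spec}\HH_G\to\operatorname{Spec}\Zp[u_t]$.

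The real divergence is in equidimensionality, and here your proposal is both more complicated than necessary and not quite complete. You try to ``transport equidimensionality back across $A\hookrightarrow\OO_{\WW}(\VV)$'' and then handle boundary components separately; you correctly flag this as the obstacle, and indeed it is not clear how to make that transport precise. The paper sidesteps this entirely with a much cleaner argument: one applies Lemma~\ref{lem:non zero div}(2) \emph{directly to affinoid opens of $\EE$}, not of $\WW$. The point is that $p$ is not a zero divisor in $\hu$ itself. You already have the ingredients for this in your discussion: since $A$ is $\ZZ$-torsionfree, $\ker(D)$ coincides with the kernel of the associated pseudocharacter (as in the proof of Lemma~\ref{lem:gluing}), and the latter is visibly $p$-saturated by $A$-linearity, so $\hu=R/\ker(D)$ is $p$-torsionfree. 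Then Lemma~\ref{lem:non zero div} (both parts) shows that every nonempty open of $\EE$ contains a nonempty rational subset on which $p$ is invertible, i.e.\ meets the characteristic zero locus. Now \cite[Theorem~5.3.7(iii)]{Urb11} gives that the characteristic zero part of $\EE$ is equidimensional of dimension $\dim\WW$, so every nonempty open of $\EE$ has dimension at least $\dim\WW$; the upper bound is immediate from finiteness over $\WW$ via \cite[Example~1.8.9(ii)]{Hub96}. No separate treatment of ``components in $\{p=0\}$'' is needed, because there simply are no such components.
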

\begin{proof}
To show that the morphism $\EE \to \ZC$ is finite, we observe that
$\ZC$ can be covered by open sets whose preimage in $\EE$ is contained
in some $\EE_{\UU,Q_+}$.  The finiteness of the morphism $\EE \to \ZC$ then
follows from the finiteness of the maps $\EE_{\UU,Q_+} \to \UU$.

Now we check that the morphism is surjective.
Let $z \in \ZC$, and let $k$ be the residue field of $z$.
Observe that $\operatorname{Spec} \HH_G \to \operatorname{Spec} \Zp[u_t]$ is
surjective, so $\HH_G \otimes_{\Zp[u_t]} k$ cannot be the zero ring.
The image of $\ker(D)$ in $\HH_G \otimes_{\Zp[u_t]} k$ is contained
in the kernel of the base change of $D$ to $\HH_G \otimes_{\Zp[u_t]} k$.
Therefore the image of $\ker(D)$ cannot be the unit ideal, and so there
must be a point of $\EE$ lying above $z$.

Finally, we show that $\EE$ is equidimensional.
The weight space $\WW$ has the same dimension as its characteristic zero
part.
By \cite[Theorem 5.3.7(iii)]{Urb11}, the characteristic zero part of
$\EE$ is equidimensional of dimension $\dim \WW$.
By Lemma \ref{lem:non zero div}(2), any nonempty open $\UU \subseteq \EE$
has nonempty characteristic zero part, so it has dimension at least $\dim \WW$.
Conversely, since $\EE$ is locally finite over $\WW$, $\EE$ has
dimension at most $\dim \WW$ by \cite[Example 1.8.9(ii)]{Hub96}.
\end{proof}

\bibliography{../refs}{}

\newcommand{\arxiv}[1]{\href{http://www.arxiv.org/abs/#1}{arXiv:#1}}
\begin{thebibliography}{Eme06b}

\bibitem[AIP]{AIP15}
F.~{Andreatta}, A.~{Iovita}, and V.~{Pilloni}.
\newblock Le halo spectral.
\newblock {\em Ann. Sci. \'Ecole Norm. Sup.}
\newblock To appear,
  \url{http://perso.ens-lyon.fr/vincent.pilloni/halofinal.pdf}.

\bibitem[Ami64]{Ami64}
Y.~Amice.
\newblock Interpolation {$p$}-adique.
\newblock {\em Bull. Soc. Math. France}, 92:117--180, 1964.

\bibitem[AS08]{AS08}
A.~Ash and G.~Stevens.
\newblock $p$-adic deformations of arithmetic cohomology.
\newblock 2008.
\newblock \url{http://math.bu.edu/people/ghs/preprints/Ash-Stevens-02-08.pdf}.

\bibitem[Bir19]{birkbeck-slopes}
C.~Birkbeck.
\newblock Slopes of overconvergent {Hilbert} modular forms.
\newblock {\em Exp. Math.}, 2019.

\bibitem[BK05]{BK05}
K.~Buzzard and L.~J.~P. Kilford.
\newblock The 2-adic eigencurve at the boundary of weight space.
\newblock {\em Compos. Math.}, 141(3):605--619, 2005.

\bibitem[BT72]{bruhat-tits-1}
F.~Bruhat and J.~Tits.
\newblock Groupes r\'{e}ductifs sur un corps local.
\newblock {\em Inst. Hautes \'{E}tudes Sci. Publ. Math.}, (41):5--251, 1972.

\bibitem[BT84]{bruhat-tits-2}
F.~Bruhat and J.~Tits.
\newblock Groupes r\'{e}ductifs sur un corps local. {II}. {S}ch\'{e}mas en
  groupes. {E}xistence d'une donn\'{e}e radicielle valu\'{e}e.
\newblock {\em Inst. Hautes \'{E}tudes Sci. Publ. Math.}, (60):197--376, 1984.

\bibitem[Buz07]{Buz07}
K.~Buzzard.
\newblock Eigenvarieties.
\newblock In {\em {$L$}-functions and {G}alois representations}, volume 320 of
  {\em London Math. Soc. Lecture Note Ser.}, pages 59--120. Cambridge Univ.
  Press, Cambridge, 2007.

\bibitem[BW00]{BW00}
A.~Borel and N.~Wallach.
\newblock {\em Continuous cohomology, discrete subgroups, and representations
  of reductive groups}, volume~67 of {\em Mathematical Surveys and Monographs}.
\newblock American Mathematical Society, Providence, RI, second edition, 2000.

\bibitem[BZ77]{BZ77}
I.~N. Bernstein and A.~V. Zelevinsky.
\newblock Induced representations of reductive {$p$}-adic groups. {I}.
\newblock {\em Ann. Sci. \'Ecole Norm. Sup. (4)}, 10(4):441--472, 1977.

\bibitem[Che14]{Che14}
G.~Chenevier.
\newblock The {$p$}-adic analytic space of pseudocharacters of a profinite
  group and pseudorepresentations over arbitrary rings.
\newblock In {\em Automorphic forms and {G}alois representations. {V}ol. 1},
  volume 414 of {\em London Math. Soc. Lecture Note Ser.}, pages 221--285.
  Cambridge Univ. Press, Cambridge, 2014.

\bibitem[CM98]{CM96}
R.~Coleman and B.~Mazur.
\newblock The eigencurve.
\newblock In {\em Galois representations in arithmetic algebraic geometry
  ({D}urham, 1996)}, volume 254 of {\em London Math. Soc. Lecture Note Ser.},
  pages 1--113. Cambridge Univ. Press, Cambridge, 1998.

\bibitem[Col96]{Col96}
R.~F. Coleman.
\newblock Classical and overconvergent modular forms.
\newblock {\em Invent. Math.}, 124(1-3):215--241, 1996.

\bibitem[Col97]{Col97}
R.~F. Coleman.
\newblock {$p$}-adic {B}anach spaces and families of modular forms.
\newblock {\em Invent. Math.}, 127(3):417--479, 1997.

\bibitem[Col10]{Col10}
P.~Colmez.
\newblock Fonctions d'une variable {$p$}-adique.
\newblock {\em Ast\'erisque}, 330:13--59, 2010.

\bibitem[Eme06a]{Eme06J}
M.~Emerton.
\newblock Jacquet modules of locally analytic representations of {$p$}-adic
  reductive groups. {I}. {C}onstruction and first properties.
\newblock {\em Ann. Sci. \'Ecole Norm. Sup. (4)}, 39(5):775--839, 2006.

\bibitem[Eme06b]{Eme06C}
M.~Emerton.
\newblock On the interpolation of systems of eigenvalues attached to
  automorphic {H}ecke eigenforms.
\newblock {\em Invent. Math.}, 164(1):1--84, 2006.

\bibitem[Han17]{Han15}
D.~Hansen.
\newblock Universal eigenvarieties, trianguline {G}alois representations, and
  {$p$}-adic {L}anglands functoriality.
\newblock {\em J. Reine Angew. Math.}, 730:1--64, 2017.
\newblock With an appendix by James Newton.

\bibitem[Hid86]{Hid86}
H.~Hida.
\newblock Galois representations into {${\rm GL}_2({\bf Z}_p[[X]])$} attached
  to ordinary cusp forms.
\newblock {\em Invent. Math.}, 85(3):545--613, 1986.

\bibitem[Hid88]{Hid88}
H.~Hida.
\newblock On {$p$}-adic {H}ecke algebras for {${\rm GL}_2$} over totally real
  fields.
\newblock {\em Ann. of Math. (2)}, 128(2):295--384, 1988.

\bibitem[Hid94]{Hid94}
H.~Hida.
\newblock {$p$}-adic ordinary {H}ecke algebras for {${\rm GL}(2)$}.
\newblock {\em Ann. Inst. Fourier (Grenoble)}, 44(5):1289--1322, 1994.

\bibitem[Hub93]{Hub93}
R.~Huber.
\newblock Continuous valuations.
\newblock {\em Math. Z.}, 212(3):455--477, 1993.

\bibitem[Hub94]{Hub94}
R.~Huber.
\newblock A generalization of formal schemes and rigid analytic varieties.
\newblock {\em Math. Z.}, 217(4):513--551, 1994.

\bibitem[Hub96]{Hub96}
R.~Huber.
\newblock {\em \'Etale cohomology of rigid analytic varieties and adic spaces}.
\newblock Aspects of Mathematics, E30. Friedr. Vieweg \& Sohn, Braunschweig,
  1996.

\bibitem[JN]{jn-parallel}
C.~Johansson and J.~Newton.
\newblock Parallel weight 2 points on hilbert modular eigenvarieties and the
  parity conjecture.
\newblock \arxiv{1801.04719}.

\bibitem[JN19]{JN16}
C.~Johansson and J.~Newton.
\newblock Extended eigenvarieties for overconvergent cohomology.
\newblock {\em Algebra Number Theory}, 13(1):93--158, 2019.

\bibitem[Kat73]{Kat73}
N.~M. Katz.
\newblock {$p$}-adic properties of modular schemes and modular forms.
\newblock In {\em Modular functions of one variable, {III} ({P}roc. {I}nternat.
  {S}ummer {S}chool, {U}niv. {A}ntwerp, {A}ntwerp, 1972)}, pages 69--190.
  Lecture Notes in Mathematics, Vol. 350. Springer, Berlin, 1973.

\bibitem[Laz65]{Laz65}
M.~Lazard.
\newblock Groupes analytiques {$p$}-adiques.
\newblock {\em Inst. Hautes \'Etudes Sci. Publ. Math.}, 26:389--603, 1965.

\bibitem[LS04]{LS04}
J.-S. Li and J.~Schwermer.
\newblock On the {E}isenstein cohomology of arithmetic groups.
\newblock {\em Duke Math. J.}, 123(1):141--169, 2004.

\bibitem[LWX17]{LWX17}
R.~Liu, D.~Wan, and L.~Xiao.
\newblock The eigencurve over the boundary of weight space.
\newblock {\em Duke Math. J.}, 166(9):1739--1787, 2017.

\bibitem[Rob63]{Rob63}
N.~Roby.
\newblock Lois polynomes et lois formelles en th\'eorie des modules.
\newblock {\em Ann. Sci. \'Ecole Norm. Sup. (3)}, 80:213--348, 1963.

\bibitem[Ryd08]{Ryd08}
D.~Rydh.
\newblock {\em Families of cycles and the Chow scheme}.
\newblock PhD thesis, KTH, 2008.

\bibitem[Ste94]{Ste94}
G.~Stevens.
\newblock Rigid analytic modular symbols.
\newblock 1994.
\newblock \url{http://math.bu.edu/people/ghs/preprints/OC-Symbs-04-94.pdf}.

\bibitem[SW18]{berkeley}
P.~{Scholze} and J.~{Weinstein}.
\newblock Berkeley lectures on $p$-adic geometry.
\newblock 2018.
\newblock \url{http://www.math.uni-bonn.de/people/scholze/Berkeley.pdf}.

\bibitem[Urb11]{Urb11}
E.~Urban.
\newblock Eigenvarieties for reductive groups.
\newblock {\em Ann. of Math. (2)}, 174(3):1685--1784, 2011.

\bibitem[Ye]{ye-slopes}
L.~Ye.
\newblock Slopes in eigenvarieties for definite unitary groups.
\newblock \url{http://www.math.harvard.edu/~lynnelle/lynnelle-ye-draft.pdf}.

\end{thebibliography}
\bibliographystyle{../shortalpha}
\end{document}